\numberwithin{equation}{section}
\newtheorem{theorem}{Theorem}[section]
\newtheorem{lemma}[theorem]{Lemma}
\theoremstyle{definition}
\newtheorem{definition}[theorem]{Definition}
\newtheorem{remark}[theorem]{Remark}
\newcommand{\R}{\mathbb{R}}
\newcommand{\supp}{{\rm supp}{\hspace{.05cm}}}
\begin{document}
\title
[Normalized solutions for Chern-Simons-Schr\"{o}dinger systems]
{Normalized solutions to the Chern-Simons-Schr\"{o}dinger system: the
	supercritical case}

 \author[L.\ Shen]{Liejun Shen}

\author[M.\ Squassina]{Marco Squassina}

 \address{Liejun Shen, \newline\indent Department of Mathematics, Zhejiang Normal University, \newline\indent
	Jinhua, Zhejiang, 321004, People's Republic of China}
\email{\href{mailto:ljshen@zjnu.edu.cn}{ljshen@zjnu.edu.cn}}

\address{Marco Squassina, \newline\indent
	Dipartimento di Matematica e Fisica \newline\indent
	Universit\`a Cattolica del Sacro Cuore, \newline\indent
	Via della Garzetta 48, 25133, Brescia, Italy}
\email{\href{mailto:marco.squassina@unicatt.it}{marco.squassina@unicatt.it}}

\subjclass[2010]{35J20,~35J61,~35B06.}
\keywords{Normalized solutions, Chern-Simons-Schr\"{o}dinger system,
 Trudinger-Moser inequality, Constrained minimization approach,
Ground state solution, Variational method.}

\thanks{L.\ Shen was partially supported by NSFC (12201565). M.\  Squassina  is  member  of  Gruppo  Nazionale  per
	l'Analisi  Matematica,  la Probabilita  e  le  loro  Applicazioni  (GNAMPA)  of  the  Istituto  Nazionale  di  Alta  Matematica  (INdAM)}

\begin{abstract}
We are concerned with the existence of normalized solutions for a class of generalized Chern-Simons-Schr\"{o}dinger type problems
with supercritical exponential growth
\[
  \left\{
  \begin{array}{ll}
\displaystyle    -\Delta u +\lambda u+A_0 u+\sum\limits_{j=1}^2A_j^2 u=f(u), \\
 \displaystyle     \partial_1A_2-\partial_2A_1=-\frac{1}{2}|u|^2,~\partial_1A_1+\partial_2A_2=0,\\
 \displaystyle    \partial_1A_0=A_2|u|^2,~ \partial_2A_0=-A_1|u|^2,\\
 \displaystyle    \int_{\R^2}|u|^2dx=a^2,
  \end{array}
\right.
\]
where $a\neq0$, $\lambda\in\R$ is known as the Lagrange multiplier
and $f\in \mathcal{C}^1(\R)$ denotes the nonlinearity that fulfills the
 supercritical exponential growth in the Trudinger-Moser sense at infinity.
Under suitable assumptions, combining the constrained minimization approach together with the homotopy stable family and elliptic regularity theory,
 we obtain that the problem has at least a ground state solution.
\end{abstract}
\maketitle

\begin{center}
	\begin{minipage}{8.5cm}
		\small
		\tableofcontents
	\end{minipage}
\end{center}

\smallskip

\section{Introduction and main results}

In this article, we investigate the existence of solutions for the following generalized Chern-Simons-Schr\"{o}dinger
(\textbf{CSS} in short) system/equation with supercritical exponential growth
\begin{equation}\label{mainequation1}
	\left\{
	\begin{array}{ll}
		\displaystyle    -\Delta u +\lambda u+A_0 u+\sum\limits_{j=1}^2A_j^2 u= f(u), \\
		\displaystyle     \partial_1A_2-\partial_2A_1=-\frac{1}{2}|u|^2,~\partial_1A_1+\partial_2A_2=0,\\
		\displaystyle    \partial_1A_0=A_2|u|^2,~ \partial_2A_0=-A_1|u|^2,
	\end{array}
	\right.
\end{equation}
under the constraint
\begin{equation}\label{mainequation1a}
	\int_{\R^2}|u|^2dx=a^2,
\end{equation}
where $a\neq0$, $\lambda\in\R$ is known as the Lagrange multiplier
and $f\in \mathcal{C}^0(\R)$ denotes the nonlinearity that fulfills the
supercritical exponential growth in the Trudinger-Moser sense at infinity which would be specified later.

In recent years, the following time-dependent CSS system in two spatial dimension
\begin{equation}\label{CSS1}
	\begin{cases}
		\text{i}D_0\psi+(D_1D_1+D_2D_2)\psi+g(|\psi|^2)\psi=0, \\
		\partial_0A_1-\partial_1A_0=-\text{Im}(\overline{\psi} D_2\psi),\\
		\partial_0A_2-\partial_2A_0=\text{Im}(\overline{\psi} D_1\psi), \\
		\partial_1A_2-\partial_2A_1=-\frac{1}{2}|\psi|^2 ,\\
	\end{cases}
\end{equation}
is usually exploited to describe the non-relativistic dynamics behavior
of massive number of particles in Chern-Simons gauge fields,
where i stands for the imaginary unit,
$\partial_0=\frac{\partial}{\partial t}$, $\partial_1=\frac{\partial}{\partial x_1}$, $\partial_2=\frac{\partial}{\partial x_2}$
for $(t,x_1,x_2)\in\R^{1+2}$, $\psi:\R^{1+2}\to \mathbb{C}$ is the complex scalar field,  $A_j:\R^{1+2}\to\R$ denotes the gauge field,
$D_j=\partial_j +\text{i}A_j$ is the covariant derivative for $j=0, 1, 2$
and the function $g$ is the nonlinearity.

This model plays an important role in
the study of the high-temperature superconductor, Aharovnov-Bohm scattering, and quantum Hall effect,
we refer the reader to \cite{Jackiw1,Jackiw2,Jackiw3}. Moreover, there are some further physical motivations
for considering CSS system \eqref{CSS1}, see e.g.
\cite{Dunne,Huh1,LS1,LS2}.

For $j=0,1,2$, we always deal with $A_j(t,x)=A_j(x)$ for all $(t,x_1,x_2)\in\R^{1+2}$.
If the standing wave ansatz $\psi(t,x)=e^{\text{i}\lambda t}u(x)$ with a given $\lambda\in\R$
for $u:\R^2\to\R$, then \eqref{CSS1} reduces to
\begin{equation}\label{CSS2a}
	\left\{
	\begin{array}{ll}
		\displaystyle  -\Delta u+\lambda u+A_0 u+\sum_{j=1}^2A_j^2 u=f(u),  \\
		\displaystyle     \partial_1A_2-\partial_2A_1=-\frac{1}{2}|u|^2,\\
		\displaystyle    \partial_1A_0=A_2|u|^2,~ \partial_2A_0=-A_1|u|^2,
	\end{array}
	\right.
\end{equation}
where $f(u)=g(|u|^2)u$. Let us suppose that the gauge field $A_j$ satisfies the Coulomb gauge condition
$\sum_{j=0}^2\partial_jA_j= 0$, then
\eqref{CSS2a} becomes the original CSS equation \eqref{mainequation1}, namely
\begin{equation}\label{CSS2b}
	\left\{
	\begin{array}{ll}
		\displaystyle    -\Delta u+\lambda u+A_0 u+\sum_{j=1}^2A_j^2 u=f(u),  \\
		\displaystyle    \partial_1A_0=A_2|u|^2,~ \partial_2A_0=-A_1|u|^2,\\
		\displaystyle  \partial_1A_2-\partial_2A_1=-\frac{1}{2}|u|^2,~
		\partial_1A_1+\partial_2A_2 =0.
	\end{array}
	\right.
\end{equation}
Since $\partial_1A_0=A_2|u|^2$ and $\partial_2A_0=-A_1|u|^2$
in \eqref{CSS2b}, there holds
\[
\Delta A_0 =\partial_1\big(A_2|u|^2\big)-\partial_2\big(A_1|u|^2\big),
\]
which leads to
\begin{equation}\label{CSS2d}
	A_0[u](x)=\frac{x_1}{2\pi |x|^2}\ast \big(A_2|u|^2\big)
	-\frac{x_2}{2\pi |x|^2}\ast \big(A_1|u|^2\big).
\end{equation}
Analogously, adopting $\partial_1A_2-\partial_2A_1=-\frac{1}{2}|u|^2$ and
$\partial_1A_1+\partial_2A_2 =0$ in \eqref{CSS2b} to deduce that
\[
\Delta A_1= \partial_2\bigg(\frac{|u|^2}{2}\bigg)~ \text{and}~
\Delta A_2= -\partial_1\bigg(\frac{|u|^2}{2}\bigg).
\]
From which,
the components $A_j$ for $j=1,2$ in \eqref{CSS2b} can be represented as
\begin{equation}\label{CSS2e1}
	A_1[u](x)=\frac{x_2}{2\pi|x|^2}\ast\bigg(\frac{|u|^2}{2}\bigg)
	=-\frac{1}{4\pi }\int_{\R^2}\frac{(x_2-y_2)u^2(y)}{|x-y|^2}dy,
\end{equation}
\begin{equation}\label{CSS2e2}
	A_2[u](x) =-\frac{x_1}{2\pi|x|^2}\ast\bigg(\frac{|u|^2}{2}\bigg)
	=\frac{1}{4\pi }\int_{\R^2}\frac{(x_1-y_1)u^2(y)}{|x-y|^2}dy.
\end{equation}
When there is no misunderstanding, for simplicity we shall write shortly $A_j$ in place of $A_j[u]$, for $j=0,1,2$.
Further properties of $A_j$, for $j=0,1,2$, will be introduced in Section \ref{Section2} below.

Actually, if $u$ is radially symmetric in
the standing wave ansatz $\psi(t,x)=e^{\text{i}\lambda t}u(x)$,
CSS system \eqref{CSS1} becomes a single equation.
In \cite{Byeon}, Byeon-Huh-Seok considered the standing waves  of type
\begin{equation}\label{CSS2}
	\begin{gathered}
		\psi(t,x)=u(|x|)e^{\text{i}\lambda t},~  A_0(t,x)=k(|x|), \hfill\\
		A_1(t,x)=\frac{x_2}{|x|^2}h(|x|),~   A_2(t,x)=-\frac{x_1}{|x|^2}h(|x|),\hfill\\
	\end{gathered}
\end{equation}
where $k$ and $h$ are real value functions depending only on $|x|$. Note that \eqref{CSS2} satisfies the Coulomb gauge condition with $\varsigma=ct+n\pi$, where $n$ is
an integer and $c$ is a real constant. To look for solutions of CSS system \eqref{CSS1} of the type \eqref{CSS2}, it suffices to solve the
following semilinear elliptic equation
\begin{equation}\label{BHS}
	-\Delta u+ \lambda u+ \bigg(\int_{|x|}^\infty
	\frac{h(s)}{s}u^2(s)ds+\frac{h^2(|x|)}{|x|^2} \bigg)u=f(u)
	~\text{in}~\mathbb{R}^2,
\end{equation}
where $h(s)=\int_0^s\frac{r}{2}u^2(r)dr$.

Now, we have two kinds of CSS type equations \eqref{mainequation1} and \eqref{BHS} in hands
and there are two aspects to the studies for them.
On the one hand, one can choose the frequency $\lambda\in\R$ to be fixed.
In this situation, the existence, nonexistence and multiplicity of nontrivial solutions
have been considerably contemplated by a lot of mathematicians, see \cite{AP,CZT,KT,LOZ,LYY,PR,Shen,SSY,PSZZ,WT,DPS,JCZSA}
and the references therein for example.

On the other hand, one can find solutions for \eqref{mainequation1} or \eqref{BHS}
with an unknown frequency $\lambda\in\R$ which appears as a Lagrange multiplier.
As we all know, the mass of each solution to the Cauchy problem for system \eqref{CSS1} is conserved along time, namely
$\int_{\R^2}|\psi(t,\cdot)|^2dx= \int_{\R^2}|\psi(0,\cdot)|^2dx$
for any $t\in[0,T)$. In physics, there exists a crucial meaning concerning the mass
which is often adopted to represent the power supply in nonlinear
optics or the total number of atoms in Bose-Einstein condensation, see \cite{Esry,Frantzeskakis,Malomed} for example.
As a consequence, taking physical point of views into account,
it is interesting to seek for solutions to \eqref{CSS1} with prescribed mass.

This
naturally leads to the study of solutions to \eqref{mainequation1}-\eqref{mainequation1a} for $a>0$ given. In this scenario, solutions to
\eqref{mainequation1}-\eqref{mainequation1a}
are referred to as normalized solutions. Indeed, solutions to \eqref{mainequation1}-\eqref{mainequation1a} correspond to critical points of the underlying energy
functional $E$ restricted on $S(a)$, where
\begin{equation}\label{functional}
	E(u)\triangleq\frac12\int_{\R^2}[|\nabla u|^2 + (A_1^2+A_2^2)|u|^2]dx-\int_{\R^2}F(u)dx
\end{equation}
and
\[
S(a)\triangleq\bigg\{u\in H^1(\R^2):\int_{\R^2}|u|^2dx=a^2\bigg\}.
\]
Moreover, from mathematical perspectives, the consideration of normalized solutions turns out to be also
meaningful since it benefits from understanding dynamical properties of stationary solutions to
equations like \eqref{CSS1}.

In recent years, the considerations for normalized solutions for
\eqref{mainequation1} or \eqref{BHS} have received more and more attentions.
Speaking precisely, for $f(u)=|u|^{p-2}u$,
Byeon \emph{et al.} \cite{Byeon} derived the existence of normalized solution
for each $a\neq0$ if $p\in(2,3]$ and sufficiently small $|a|$ if $p\in(3,4)$. Afterwards,
the authors in \cite{Yuan,LX} generalized the results in \cite{Byeon} to $p>4$.
For the particular case $p=4$, Gou and Zhang \cite{GZ} exhibited some very interesting results.
There exist some other related results on normalized solution in
\cite{LC,Luo,YCS1,YCS2,YTC} and the references therein.

It should be mentioned that the research interest could date back to the Schr\"{o}dinger equations with prescribed $L^2$-norm.
In \cite{Jeanjean1997}, with the help of a minimax approach and compactness argument,
Jenajean contemplated the existence of solutions for the following Schr\"{o}dinger
problem
\begin{equation}\label{Jeanjean}
	\left\{
	\begin{array}{ll}
		\displaystyle   -\Delta u+\lambda u=
		g(u) &\text{in}~ \R^N, \\
		\displaystyle     \int_{\R^N}|u|^2dx=a^2>0.
	\end{array}
	\right.
\end{equation}
Later on, there are some complements and generalizations in \cite{JeanjeanLu2}.
In \cite{Soave1}, for $g(t)=\mu|t|^{q-2}t+|t|^{p-2}t$ with $2<q\leq 2+\frac4N\leq p<2^*$,
Soave considered the existence of solutions for problem \eqref{Jeanjean},
where $2^*=\frac{2N}{N-2}$ if $N\geq3$ and $2^*=\infty$ if $N=2$.
For this type of combined nonlinearities, Soave \cite{Soave2} proved
the existence of ground state and excited solutions when $p=2^*$.
As to more results for problem \eqref{Jeanjean}, see e.g.
\cite{BartschSoave,JeanjeanLu1,JeanjeanLu3,LiXinfu,WeiWu} and the references therein.

Whereas, the planar case for critical problem \eqref{Jeanjean}
are definitely different from $N\geq3$.
In reality, one can observe that $2^*=\infty$ if $N=2$ and
$H^1(\R^2)\not\hookrightarrow L^\infty(\R^2)$ which makes the problems special and quite delicate.
So, it is not easy to deal with the nonlinearity involving
a (super)critical exponential growth trivially.
Due to the Trudinger-Moser type inequality, we can say that a function $f$ possesses the
\emph{critical exponential growth} at infinity if there exists a constant $\alpha_{0}>0$ such that
\begin{equation}\label{definitioncriticalgrowth}
	\lim _{t \rightarrow+\infty} \frac{|f(t)|}{e^{\alpha t^{2}}}= \begin{cases}0,
		& \forall \alpha>\alpha_{0}, \\ +\infty, & \forall \alpha<\alpha_{0}.\end{cases}
\end{equation}
The above definition was introduced by Adimurthi and Yadava in \cite{AYA}, see also de Figueiredo, Miyagaki
and Ruf \cite{Figueiredo} for example.
As to the \emph{subcritical exponential growth} at infinity for $f$, it says that
\begin{equation}\label{definitionsubcriticalgrowth}
	\lim _{t \rightarrow+\infty} \frac{|f(t)|}{e^{\alpha t^{2}}}=0,~\forall \alpha>0.
\end{equation}

Because of the appearance of the critical exponential growth in \eqref{definitioncriticalgrowth},
the results for problem \eqref{Jeanjean} with $N=2$ are fewer than
those for $N\geq3$ involving Sobolev critical growth. Very recently, Alves, Ji and Miyagaki \cite{AJM}
firstly studied problem \eqref{Jeanjean} in the case $N=2$ with $g$ satisfying \eqref{definitioncriticalgrowth}.
Based on the ideas in \cite{AJM}, the authors investigated the existence of normalized solutions for
\eqref{mainequation1} and \eqref{BHS} in \cite{YCS1} and \cite{YTC}, respectively.
In this article, we continue to study the existence of normalized solutions for \eqref{mainequation1}
with supercritical exponential growth which is definitely differently from \eqref{definitioncriticalgrowth}.

In order to depict clearly the ideas how to handle the supercritical exponential case,
we are going to derive the existence of normalized solutions for CSS equation \eqref{mainequation1}
with (sub)critical exponential growth. Let us impose the assumptions on $f$ as follows.
\begin{itemize}
	\item[$(f_1)$] $f \in \mathcal{C}^1(\mathbb{R})$ and
	there is a constant $\chi\geq4$ such that $f(s)=o(s^{\chi-1})$ as $s\to0$;
	\item[$(f_2)$] There is a constant $\theta>4$ such that $0<\theta F(s)\leq  f(s)s$ for all $s\neq0$;
	\item[$(f_3)$] The function $\bar{F}(s)=f(s)s-2F(s)$ satisfies
	\[
	\bar{F}(s)/|s|^{4}~\text{is strictly increasing in}~(0,+\infty).
	\]
\end{itemize}

Since we are interested in positive solutions for Eq. \eqref{mainequation1}, without loss of generality, we
suppose that $f(s)\equiv0$ for all $s\in(-\infty,0]$ until the end of the present paper.

Now, we can state the first main result.

\begin{theorem}\label{maintheorem1}
	Suppose that $f$ satisfies \eqref{definitionsubcriticalgrowth} and $(f_1)-(f_3)$,
	then there is a small $a^*>0$ such that problems \eqref{mainequation1}-\eqref{mainequation1a}
	possess a couple weak solution $(u_0,\lambda_0)\in H^1(\R^2)\times\R^+$ for all $a\in(0,a^*]$,
	where $\lambda_0>0$, $u_0(x)>0$ for all $x\in\R^2$ and $E(u_0)=m(a)$ with
	\begin{equation}\label{minimization}
		m(a)\triangleq \inf_{u\in \mathcal{M}(a)}E(u),~\mathcal{M}(a)=\big\{u\in S(a):J(u)=0\big\}.
	\end{equation}
	Here the energy functional $J:S(a)\to\R$ is defined by
	\[
	J(u)= \int_{\R^2}[|\nabla u|^2 + (A_1^2+A_2^2)|u|^2]dx-\int_{\R^2}[f(u)u-2F(u)]dx.
	\]
\end{theorem}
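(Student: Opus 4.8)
The plan is to realize $\mathcal{M}(a)$ as a natural constraint coming from the $L^2$-preserving fiber map, to reinterpret $m(a)$ as a minimax level, and then to produce a minimizing sequence that is simultaneously a Palais--Smale sequence for $E$ on $S(a)$, from which a ground state is extracted by a compactness analysis.

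\textbf{Step 1: the fiber map and the structure of $\mathcal{M}(a)$.} For $u\in S(a)$ and $t>0$ I would introduce the dilation $u_t(x):=tu(tx)$, which preserves the $L^2$-norm, so $u_t\in S(a)$. Using the degree $-1$ homogeneity of the kernels in \eqref{CSS2e1}--\eqref{CSS2e2} one checks $A_j[u_t](x)=tA_j[u](tx)$, whence both the kinetic and the gauge terms scale like $t^2$; writing $P(u):=\int_{\R^2}[|\nabla u|^2+(A_1^2+A_2^2)|u|^2]\,dx$ and $\bar F(s)=f(s)s-2F(s)$ one gets
\[
\psi_u(t):=E(u_t)=\tfrac{t^2}{2}P(u)-\tfrac{1}{t^2}\int_{\R^2}F(tu)\,dx,\qquad \psi_u'(t)=tP(u)-\tfrac{1}{t^3}\int_{\R^2}\bar F(tu)\,dx,
\]
so that $\psi_u'(1)=J(u)$ and $u\in\mathcal{M}(a)$ precisely when $t=1$ is critical for $\psi_u$. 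From $(f_1)$ (with $\chi\ge4$) the integral $\int\bar F(tu)$ is $o(t^4)$ as $t\to0^+$, so $\psi_u$ is positive and increasing near $0$; from $(f_2)$ with $\theta>4$ one has $F(s)\gtrsim s^\theta$ for large $s$, so $\psi_u(t)\to-\infty$ as $t\to\infty$; and rewriting $\psi_u'(t)=0$ as $P(u)=\int_{\R^2}\frac{\bar F(tu)}{(tu)^4}u^4\,dx$, the monotonicity of $\bar F(s)/s^4$ in $(f_3)$ forces a \emph{unique} root. Hence $\psi_u$ has a unique maximum point $t_u>0$, the map $u\mapsto u_{t_u}$ projects $S(a)$ onto $\mathcal{M}(a)$, and $m(a)=\inf_{u\in S(a)}\max_{t>0}E(u_t)$. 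Combining $J(u)=0$ with $(f_2)$ gives $E(u)\ge\frac{\theta-4}{2\theta}\int_{\R^2}f(u)u\,dx\ge0$ on $\mathcal{M}(a)$, and a Trudinger--Moser estimate for small $a$ upgrades this to the strict bound $m(a)>0$.

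\textbf{Step 2: a good minimizing sequence.} The identity above bounds $\int f(u_n)u_n$ and $\int F(u_n)$ along any minimizing sequence, hence $P(u_n)$, hence (the gauge term being nonnegative) $\|u_n\|_{H^1}$; thus minimizing sequences are bounded. To circumvent the lack of compactness of the $L^2$-constraint under weak limits, I would not minimize directly: using the minimax description together with Ghoussoub's homotopy stable family of compact sets generated by the scaling paths $t\mapsto u_t$, I would extract $(u_n)\subset S(a)$ with $E(u_n)\to m(a)$, $J(u_n)\to0$ and $(E|_{S(a)})'(u_n)\to0$. The Lagrange multiplier rule then yields $\lambda_n\in\R$ with $-\Delta u_n+\lambda_n u_n+A_0[u_n]u_n+\sum_jA_j^2[u_n]u_n-f(u_n)\to0$ in $H^{-1}(\R^2)$, and testing against $u_n$ shows $(\lambda_n)$ is bounded.

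\textbf{Step 3: compactness and passage to the limit (the crux).} Passing to a subsequence, $u_n\rightharpoonup u_0$ in $H^1$ and $\lambda_n\to\lambda_0$. Working in the radial class, the subcritical exponential growth \eqref{definitionsubcriticalgrowth}, together with the Trudinger--Moser inequality and the smallness of $a$ (which keeps the relevant norms below the critical threshold), makes $u\mapsto\int F(u)$ and $u\mapsto\int f(u)u$ sequentially continuous, while the nonlocal gauge terms are weakly continuous by the properties of $A_j$ from Section \ref{Section2}. If $u_0\equiv0$ the nonlinear and gauge terms would vanish, forcing $P(u_n)\to0$ and contradicting $m(a)>0$; hence $u_0\neq0$. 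The main obstacle I expect is to \emph{upgrade weak to strong convergence while preserving the mass} $\|u_0\|_2=a$: here the plan is to prove $\lambda_0>0$ (from $J(u_n)\to0$ and the limiting equation), which makes $-\Delta+\lambda_0$ coercive; testing the difference of the equations against $u_n-u_0$ and invoking the continuity of the nonlinear and gauge terms then gives $u_n\to u_0$ strongly in $H^1$. Consequently $u_0\in S(a)\cap\mathcal{M}(a)$, $E(u_0)=m(a)$, and $(u_0,\lambda_0)$ solves \eqref{mainequation1}--\eqref{mainequation1a}.

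\textbf{Step 4: positivity and regularity.} Since $f\equiv0$ on $(-\infty,0]$ and the gauge terms depend only on $|u|^2$, replacing $u_0$ by $|u_0|$ does not increase $E$, so the minimizer may be taken nonnegative. Elliptic regularity applied to the equation, bootstrapping with the Trudinger--Moser integrability of $f(u_0)$ and the regularity of the nonlocal potentials $A_j$, gives $u_0$ smooth; the strong maximum principle then yields $u_0(x)>0$ for all $x\in\R^2$, while the sign established in Step 3 gives $\lambda_0>0$, completing the proof.
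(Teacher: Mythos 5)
Your Steps 1 and 2 follow essentially the same route as the paper: the fiber map $t\mapsto tu(t\cdot)$, uniqueness of $t_u$ from $(f_3)$, the identification $m(a)=\inf_{u\in S(a)}\max_{t>0}E(u_t)$, positivity of $m(a)$, and the construction of a Palais--Smale sequence on $\mathcal{M}(a)$ via Ghoussoub's homotopy stable families together with a bounded sequence of Lagrange multipliers. (One small correction: $m(a)>0$ does not need smallness of $a$; the paper gets $|\nabla u|_2^2\geq\varrho>0$ on $\mathcal{M}(a)$ for every $a\neq0$, and smallness of $a$ enters only to force $\lambda_0>0$.)

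The genuine gap is in Step 3. You invoke ``working in the radial class'' to make $u\mapsto\int_{\R^2}F(u)\,dx$ and $u\mapsto\int_{\R^2}f(u)u\,dx$ sequentially weakly continuous. This is inconsistent with the statement being proved: $m(a)$ is the infimum of $E$ over all of $\mathcal{M}(a)\subset H^1(\R^2)$, not over its radial part, so a minimizer of the radial problem need not achieve $m(a)$; moreover the paper's declared point (Remark 1.3) is precisely to avoid any compact embedding, radial or weighted. Without the radial restriction, the claimed sequential continuity of the nonlinear terms is false on $H^1(\R^2)$ (translations destroy it), and your fallback --- testing the difference of the equations against $u_n-u_0$ --- still requires $\int f(u_n)(u_n-u_0)\,dx\to0$, which is exactly what is missing. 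The paper closes this gap with three ingredients absent from your proposal: (i) the Lions vanishing lemma plus translation invariance of $E$ and $J$ to arrange $u_0\neq0$; (ii) Br\'ezis--Lieb splittings for $E$ and $J$ (Lemmas \ref{fBrezis-Lieb}--\ref{EBrezis-Lieb}) combined with $J(u_0)=0$ (from the Poho\u{z}aev identity for the limit equation) and, crucially, the monotonicity $a_1<a_2\Rightarrow m(a_1)\geq m(a_2)$ of Lemma \ref{non-increasing}, which yields $m(a)\leq m(|u_0|_2)\leq E(u_0)$ and hence $E(u_n-u_0)\to0$, $\int F(u_n-u_0)\,dx\to0$ and $|\nabla(u_n-u_0)|_2\to0$; (iii) only then the sign $\lambda_0>0$ (for small $a$) to recover the full mass $|u_0|_2^2=a^2$. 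Your outline identifies the right obstruction (mass leakage) and the right remedy for it ($\lambda_0>0$), but without (i) and (ii) the passage from weak convergence to $E(u_0)=m(a)$ and to strong $H^1$ convergence does not go through.
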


Given a $u\in S(a)$, it
follows from $(f_1)-(f_2)$ that
\[
E(tu(t\cdot))=\frac{t^2}2\int_{\R^2}[|\nabla u|^2 + (A_1^2+A_2^2)|u|^2]dx-t^{-2}\int_{\R^2}F(tu)dx\to-\infty
\]
as $t\to+\infty$ and so we cannot find critical points for $E$ restricted on $S(a)$ directly.
According to the discussions in Section \ref{Section2} below, we see that $\mathcal{M}(a)$
is a natural constraint manifold.

Next, we deal with the critical exponential growth case. To the end, we additionally suppose that
\begin{description}
	\item[$(f_4)$] there exist some constants $s_0>0$, $M_0>0$ and $\vartheta\in (0, 1]$ such that
	$$0<s^\vartheta F(s)\leq M_0f(s) ~\text{for all}~ s\geq s_0;$$
	\item[$(f_5)$] $\liminf\limits_{|s|\to+\infty}F(s)e^{-\alpha_0s^2}\geq\beta_0>0$, where $\beta_0$ is an arbitrary but fixed constant.
\end{description}

Our second main result reads in the following.

\begin{theorem}\label{maintheorem2}
Suppose that $f$ satisfies \eqref{definitioncriticalgrowth} and $(f_1)-(f_5)$,
then there is a small $a_*>0$ such that problems \eqref{mainequation1}-\eqref{mainequation1a}
admit a couple weak solution $(u_0,\lambda_0)\in H^1(\R^2)\times\R^+$ for all $a\in(0,a_*]$,
where $\lambda_0>0$, $u_0(x)>0$ for all $x\in\R^2$ and $E(u_0)=m(a)$.
\end{theorem}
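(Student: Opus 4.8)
The plan is to realise the level $m(a)$ as a minimax quantity over the natural constraint $\mathcal{M}(a)$ and to extract a \emph{radial} minimizing sequence whose weak limit is not destroyed in the limit, the only serious difficulty being the loss of compactness caused by the critical exponential growth. I first fix the $L^2$-preserving dilation $t\star u\triangleq tu(t\cdot)$, under which $\int_{\R^2}|t\star u|^2\,dx=\int_{\R^2}|u|^2\,dx$ while both the Dirichlet energy and the gauge energy scale like $t^2$, so that $E(t\star u)=\tfrac{t^2}{2}\int_{\R^2}[|\nabla u|^2+(A_1^2+A_2^2)u^2]\,dx-t^{-2}\int_{\R^2}F(tu)\,dx$. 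Using $(f_1)$--$(f_2)$ one verifies that $t\mapsto E(t\star u)$ possesses a unique maximum point $t_u>0$, that $\tfrac{d}{dt}E(t\star u)\big|_{t=1}=J(u)$, and hence that $t_u\star u$ is the unique point where the ray meets $\mathcal{M}(a)$. Consequently $\mathcal{M}(a)$ is a genuine constraint and $m(a)=\inf_{u\in S(a)}\max_{t>0}E(t\star u)$. The Ambrosetti--Rabinowitz condition $(f_2)$ with $\theta>4$, together with the nonnegativity of the gauge term, then delivers both $m(a)>0$ and the $H^1$-boundedness of every sequence with $E(u_n)\to m(a)$ and $J(u_n)\to0$.

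To endow a minimizing sequence with Palais--Smale data I would apply Ghoussoub's homotopy stable family machinery to this minimax description on $S(a)$, producing $(u_n)\subset S(a)$ with $E(u_n)\to m(a)$, $\big(E|_{S(a)}\big)'(u_n)\to0$ and $J(u_n)\to0$; the last relation is exactly what permits the recovery of the constraint after passing to limits. Since $E$ is invariant under the $O(2)$-action (the gauge potentials transforming equivariantly so that $A_1^2+A_2^2$ and $A_0$ are rotation invariant), a rearrangement argument lets me assume the sequence radial, placing it in $H^1_{\mathrm{rad}}(\R^2)$, while the principle of symmetric criticality guarantees that the resulting radial critical point is a critical point of the full problem. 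The dividend is the compact embedding $H^1_{\mathrm{rad}}(\R^2)\hookrightarrow L^p(\R^2)$ for every $p>2$, which removes the translation (vanishing/dichotomy) obstruction and isolates concentration as the last remaining enemy.

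The heart of the matter --- and the step I expect to be the principal obstacle --- is a threshold estimate certifying that the energy stays beneath the level at which the Trudinger--Moser inequality degenerates. Using Moser-type test functions concentrated at the origin together with the lower growth calibration $(f_5)$, I would show that for all sufficiently small $a$ the minimax level satisfies $m(a)<\tfrac{2\pi}{\alpha_0}$. Because $J(u)=0$ and $(f_2)$ let the energy control $\int_{\R^2}[|\nabla u|^2+(A_1^2+A_2^2)u^2]\,dx$ along $\mathcal{M}(a)$, this sub-threshold bound forces $\limsup_n\|\nabla u_n\|_2^2$ to stay strictly below the critical Trudinger--Moser level, so that $e^{\alpha u_n^2}$ is uniformly integrable for the relevant exponents. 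Coupling this with the growth control $(f_4)$ and a Lions-type argument, I can then pass to the limit in the exponential nonlinearities, obtaining $\int_{\R^2}F(u_n)\to\int_{\R^2}F(u_0)$ and $\int_{\R^2}f(u_n)u_n\to\int_{\R^2}f(u_0)u_0$, while the gauge energy converges by the continuity of $u\mapsto A_j[u]$ on the compactly embedded radial space; hence $u_0\in\mathcal{M}(a)$ and $E(u_0)=m(a)$, so the infimum is attained. The delicate point is that in the critical exponential regime one cannot control $\int_{\R^2}f(u_n)u_n$ in the limit without this strict sub-threshold bound, which is precisely why the smallness of $a$ and the calibration $(f_5)$ are indispensable.

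It remains to recover the Lagrange multiplier and the qualitative properties. From $\big(E|_{S(a)}\big)'(u_n)\to0$ one has $E'(u_n)+\lambda_n u_n\to0$ in $H^{-1}(\R^2)$ with $\lambda_n=-a^{-2}\langle E'(u_n),u_n\rangle$; boundedness yields, along a subsequence, $\lambda_n\to\lambda_0$, and letting $n\to\infty$ shows that $(u_0,\lambda_0)$ solves \eqref{mainequation1}--\eqref{mainequation1a} weakly. Using the Chern--Simons identity $\int_{\R^2}A_0u_0^2\,dx=2\int_{\R^2}(A_1^2+A_2^2)u_0^2\,dx$ (obtained by integrating the relations $\partial_1A_0=A_2|u_0|^2$, $\partial_2A_0=-A_1|u_0|^2$ by parts), the testing identity and $J(u_0)=0$ combine into $\lambda_0 a^2=2\int_{\R^2}F(u_0)\,dx-2\int_{\R^2}(A_1^2+A_2^2)u_0^2\,dx$, in which, for $a$ small, the positive contribution of $F$ dominates the higher-order gauge energy, giving $\lambda_0>0$. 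Finally, since $f\equiv0$ on $(-\infty,0]$ one may take $u_0\ge0$; elliptic regularity (the coefficients $A_0,A_1,A_2$ being smooth through \eqref{CSS2d}, \eqref{CSS2e1} and \eqref{CSS2e2}) renders $u_0$ classical, and the strong maximum principle upgrades this to $u_0>0$ on $\R^2$, completing the proof for every $a\in(0,a_*]$.
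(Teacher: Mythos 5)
Your overall architecture (natural constraint $\mathcal{M}(a)$, minimax characterization of $m(a)$, homotopy stable families, Moser functions to force $m(a)<2\pi/\alpha_0$, Trudinger--Moser to pass to the limit in the exponential terms) matches the paper's. However, there are two genuine gaps where your route diverges and would not go through as written.

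First, the radialization step. You propose to symmetrize the Palais--Smale sequence and work in $H^1_{\mathrm{rad}}(\R^2)$ to exploit the compact embedding into $L^p$, $p>2$. There is no rearrangement inequality for the Chern--Simons term $\int_{\R^2}(A_1^2[u]+A_2^2[u])|u|^2\,dx$: the gauge potentials $A_1,A_2$ are nonlocal convolutions with the sign-changing kernels $(x_2-y_2)/|x-y|^2$ and $(x_1-y_1)/|x-y|^2$, and Schwarz symmetrization is not known to decrease (nor preserve) this term; moreover symmetrization destroys the constraint $J(u)=0$. Alternatively, setting the problem in $H^1_{\mathrm{rad}}$ from the outset only produces a minimizer at the radial level $m_r(a)\geq m(a)$, and without a symmetrization inequality one cannot conclude $m_r(a)=m(a)$, so the asserted conclusion $E(u_0)=m(a)$ would not follow. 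The paper deliberately avoids any compactness device of this kind (it flags this as one of its contributions): vanishing is excluded by the Lions lemma together with $m(a)>0$, translations give a nontrivial weak limit, and compactness is then recovered from Fatou's lemma, the monotonicity of $a\mapsto m(a)$ (Lemma \ref{non-increasing}), and the $(f_2)$-based inequality $E(u_n)-\frac{1}{\theta-2}J(u_n)\geq\cdots$, which forces $|\nabla u_n|_2^2\to|\nabla u_0|_2^2$ and $E(u_0)=m(a)$.

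Second, even granting a compact embedding for $p>2$, the embedding at $p=2$ is \emph{not} compact, so the weak limit may a priori satisfy $|u_0|_2^2=\tilde a^2<a^2$; your proposal simply asserts $u_0\in\mathcal{M}(a)$ without excluding this loss of mass. The paper closes this gap by first proving $\lambda_0>0$ and then invoking Lemma \ref{decreasing} (strict decrease of $m$ to the right of $\tilde a$, via the implicit function theorem and Lemma A.3), which contradicts $m(\tilde a)=m(a)$ unless $\tilde a^2=a^2$. Relatedly, your argument for $\lambda_0>0$ from the Poho\u{z}aev identity $\lambda_0a^2=2\int F(u_0)-2\int(A_1^2+A_2^2)u_0^2$ is only a heuristic: one needs a quantitative reason why $\int F(u_0)$ dominates the gauge term for small $a$, and this is not immediate since both can degenerate. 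The paper instead writes $\lambda_0=\tfrac{2}{a^2}\{\,|\nabla u_0|_2^2+3\int F(u_0)-\int f(u_0)u_0\,\}$ and bounds $\int f(u_0)u_0$ by Gagliardo--Nirenberg and Trudinger--Moser against $|\nabla u_0|_2^2\geq\varrho>0$ (Lemma \ref{positive}), which is where the smallness of $a$ actually enters. You should either adopt that estimate or supply the missing comparison.
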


\begin{remark}
In light of the nonlinearity $f$ in \eqref{mainequation1} is more general than its counterparts in
\cite{LX,LC,GZ}, so we can never simply repeat their methods to conclude Theorem \ref{maintheorem1}.
The critical exponential case has been studied in \cite{YCS1,YTC}, but there are
the following three important contributions which can be regarded as a partial motivation to
contemplate the problems behind this article.

(1) We do not require any compact condition which allows us to study a wider class of CSS equations.
Explaining more precisely, to overcome the loss of compactness, authors in \cite{YTC} look for solutions
in the radially symmetric subspace $H_r^1(\R^2)$ which implies that the imbedding $H^1_r(\R^2)\hookrightarrow L^p(\R^2)$
is compact for all $2<p<+\infty$; conversely, the authors in \cite{YCS1} find solutions in the work space
$X=\{u\in H^1(\R^2):\int_{\R^2}|x|^2|u|^2dx<+\infty\}$ which reveals that the imbedding $X\hookrightarrow L^p(\R^2)$
is compact for all $2\leq p<+\infty$.

(2) Although with the above two types of compact imbedding results in hands, authors in
\cite{YCS1,YTC} have to suppose that $a^2<1$ to recover the compactness. With the help of some specific calculations,
we would remove this unpleasant restriction. In other words, we only need to suppose that the mass $a^2$ is suitably small which is just
caused by the Chern-Simons term $\int_{\R^2} (A_1^2+A_2^2)|u|^2]dx$. Actually, we could show that
Theorems \ref{maintheorem1} and \ref{maintheorem2} holds true for all $a\neq0$ provided that it is absent.

(3) When taking the energy estimate, the assumption
\begin{description}
\item[$(f_5^\prime)$] There are constants $p>2$ and a sufficiently large $\varsigma>0$ such that $F(s)\geq\varsigma|s|^p$ for all $s\in\R$,
\end{description}
acts as a key role in \cite{YCS1,YTC}. Nevertheless, as pointed out in \cite{AS},
it covers the
essential feature of the critical exponential growth given in \eqref{definitioncriticalgrowth}
because $(f_5^\prime)$ is a global condition which requires $f$ to be $p$-superlinear growth for all
$t\in\R$ and the parameter $\varsigma$ must be large. Therefore, we shall depend on the slightly modified Moser
sequence functions introduced in \cite{AS} to reach the aim.
However, owing to the presence of the Chern-Simons term $\int_{\R^2} (A_1^2+A_2^2)|u|^2]dx$,
there are some additional challenges that we are necessary to bring in some technical calculations,
see e.g. Lemma \ref{estimateChern-Simons}.
\end{remark}

Finally, as the
 applications of Theorems \ref{maintheorem1} and \ref{maintheorem2},
 we are able to investigate the existence of normalized solutions for CSS equation
 \eqref{mainequation1} with supercritical exponential growth at infinity. Due to
\cite{AS1,AS2}, we suppose that  the nonlinearity $f$ has the form of the following particular type
 \begin{equation}\label{form}
  f(t)=h(t) e^{\bar{\alpha}_0 |t|^\tau},~\forall t\in\R,
\end{equation}
for some $\bar{\alpha}_0>0$
 and $\tau\geq2$. Hereafter, the $\mathcal{C}^1$ function $h$ that vanishes in $(-\infty,0])$ satisfies
\begin{description}
  \item[$(h_1)$] There is a constant $\chi\geq4$ such that $h(t)=o(t^{\chi-1})$ as $t\to0$;
  \item[$(h_2)$] There exists a $\theta>4$ such that $0<\theta H(t) \leq h(t)t$ for all $t\neq0$,
	where $H(t)=\int_{0}^{t}h(s)\,ds$;
\item[$(h_3)$] The function $\bar{H}(s)=h(s)s-2H(s)$ satisfies
\[
 \bar{H}(s)/|s|^{4}~\text{is strictly increasing in}~(0,+\infty).
\]
\item[$(h_4)$] There exist $\delta \in [8\theta^{-1},2)$ and $\gamma,M>0$ such that $0 \leq |h(t)| \leq Me^{\gamma |t|^{\delta}}$
for all $t\in\R$.
\end{description}

Motivated by \cite{AS1,AS2}, for the function $f$ defined in \eqref{form} together with $(h_2)$,
one could observe that it possesses the so-called \emph{\textbf{supercritical exponential growth}} at infinity when $\tau>2$,
for example
 \begin{equation}\label{definition2}
(\text{I})~\tau > 2~\text{is arbitrary if}~\bar{\alpha}_0>0~\text{is fixed;}~
 (\text{II})~\bar{\alpha}_0>0~\text{is arbitrary if}~\tau\geq2~\text{is fixed.}
 \end{equation}

 Let us exhibit the main result on this topic as follows.

 \begin{theorem}\label{maintheorem3}
 Suppose that the nonlinearity $f$ defined in \eqref{form}
requires $(h_1)-(h_4)$, then there is a small $\bar{a}^*>0$ such that,
for each $\tau>2$, there exists a $\bar{\alpha}^*_0=\bar{\alpha}_0(\tau)>0$ such that
problems \eqref{mainequation1}-\eqref{mainequation1a} possess a couple of weak solution
$(\bar{u}_0,\bar{\lambda}_0)$ for all fixed $\bar{\alpha}_0 \in (0, \bar{\alpha}_0^*)$ and $a\in(0,\bar{a}^*]$.
Moreover, if in addition
\begin{description}
 \item[$(h_5)$] there are some constants $\xi>0$ and $p>4$ such that $h(t)\geq \xi t^{p-1}$ for all $t\in[0,+\infty)$,
\end{description}
then there is a small $\bar{a}_*>0$ such that,
given a
$\bar{\alpha}_0>0$, there exist $\tau_*=\tau(\bar{\alpha}_0)>2$ and ${\xi}_0>0$
such that problems \eqref{mainequation1}-\eqref{mainequation1a} admit a couple of weak solution $(\bar{u}_0,\bar{\lambda}_0)$
 for every fixed $\tau \in [2,\tau_*)$,
  ${\xi}>{\xi}_0$ and $a\in(0,\bar{a}_*]$.
\end{theorem}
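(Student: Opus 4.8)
The plan is to deduce Theorem \ref{maintheorem3} from Theorems \ref{maintheorem1} and \ref{maintheorem2} through a truncation-and-comparison scheme, exploiting that the supercritical nonlinearity \eqref{form} degenerates to a subcritical one as $\bar\alpha_0\to0^+$ with $\tau>2$ fixed, and to a critical one as $\tau\to2^+$ with $\bar\alpha_0$ fixed. First I would fix a cutoff level $K>0$ and replace $f$ by an auxiliary nonlinearity $\tilde f=\tilde f_K\in\mathcal C^1(\R)$ that coincides with $f$ on $[0,K]$, vanishes on $(-\infty,0]$, and is extended past $K$ so as to have controlled exponential growth. For the first assertion I would extend with growth no faster than $e^{\gamma t^\delta}$, $\delta\in[8\theta^{-1},2)$, inherited from $(h_4)$, so that \eqref{definitionsubcriticalgrowth} holds; for the second assertion I would match the extension to an $e^{\bar\alpha_0 t^2}$ profile so that \eqref{definitioncriticalgrowth} holds with $\alpha_0=\bar\alpha_0$. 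Because $h$ satisfies $(h_1)$--$(h_3)$ together with $(h_4)$, the smallness of $\bar\alpha_0$ (respectively the closeness of $\tau$ to $2$ together with the largeness of $\xi$ from $(h_5)$) guarantees that $\tilde f$ inherits $(f_1)$--$(f_3)$ (respectively $(f_1)$--$(f_5)$) with constants $\chi\geq4$ and $\theta>4$ that are uniform in the cutoff.

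Second, I would apply Theorem \ref{maintheorem1} (respectively Theorem \ref{maintheorem2}) to the truncated problem. This produces, for all $a\in(0,\bar a^*]$ (respectively $(0,\bar a_*]$), a positive couple $(\bar u_0,\bar\lambda_0)\in H^1(\R^2)\times\R^+$ realizing the constrained minimum in \eqref{minimization} for the functional \eqref{functional} with $F$ replaced by the primitive $\tilde F$ of $\tilde f$. In the critical reduction one must verify that this minimization level lies below the threshold dictated by the Trudinger--Moser inequality, so that compactness is recovered; here the lower bound $(h_5)$ with $\xi>\xi_0$ enters exactly as $(f_5)$ does in the proof of Theorem \ref{maintheorem2}, pushing the level beneath the threshold by means of the (modified Moser) test functions.

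Third, and this is the decisive point, I would establish a uniform $L^\infty$ bound on the minimizer and show that it falls below the cutoff. From the equation $-\Delta \bar u_0+\bar\lambda_0\bar u_0=\tilde f(\bar u_0)-(A_0+A_1^2+A_2^2)\bar u_0$, the representations \eqref{CSS2d}, \eqref{CSS2e1}, \eqref{CSS2e2} of the gauge potentials and their summability estimates from Section \ref{Section2} (in particular Lemma \ref{estimateChern-Simons}), elliptic regularity together with a Moser--Brezis--Kato iteration yield $\|\bar u_0\|_{L^\infty}\leq\mathcal C(\|\bar u_0\|_{H^1},a)$, with $\mathcal C\to0$ as $\|\bar u_0\|_{H^1}\to0$ thanks to the higher-order vanishing $h(t)=o(t^{\chi-1})$ at the origin. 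Since the minimization level tends to $0$ as $a\to0^+$ (a consequence of the energy estimates already used for Theorems \ref{maintheorem1}--\ref{maintheorem2}), so does $\|\bar u_0\|_{H^1}$, and therefore $\|\bar u_0\|_{L^\infty}$; shrinking $\bar a^*$ (respectively $\bar a_*$) forces $0\leq\bar u_0\leq K$. On that range $\tilde f(\bar u_0)=f(\bar u_0)$, so $(\bar u_0,\bar\lambda_0)$ solves the original supercritical system \eqref{mainequation1}-\eqref{mainequation1a}. It then remains to calibrate the thresholds jointly: I would choose $\bar\alpha_0^*=\bar\alpha_0(\tau)$ (respectively $\tau_*=\tau(\bar\alpha_0)$ and $\xi_0$) small enough (respectively close enough to $2$ and large enough) that both the structural transfer of the first step and the energy–compactness estimate of the second step remain valid while the conclusion $\bar u_0\leq K$ holds.

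The hard part will be the a priori $L^\infty$ estimate of the third step, made uniform with respect to the cutoff $K$ and the small mass $a$: the Chern--Simons contributions $A_0$ and $A_1^2+A_2^2$ are nonlocal and quadratic in $u$, so they cannot be absorbed directly, and the regularity iteration must be driven by the integrability of these potentials furnished in Section \ref{Section2}. A secondary difficulty, specific to the second assertion, is calibrating $\tau_*$ and $\xi_0$ so that the critical-threshold energy estimate survives the truncation while $\tau$ remains strictly above $2$.
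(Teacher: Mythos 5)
Your overall architecture --- truncate the nonlinearity at a level $K$, solve the truncated (sub)critical problem via Theorems \ref{maintheorem1}--\ref{maintheorem2}, then show the solution never exceeds the truncation level so that it solves the original equation --- is exactly the paper's strategy (the truncation is \eqref{fR} and the intermediate statement is Theorem \ref{maintheorem4}). The gap is in your third, ``decisive'' step: the mechanism you propose for forcing $0\leq\bar u_0\leq K$ does not work. You claim that the minimization level tends to $0$ as $a\to0^+$, hence $\|\bar u_0\|_{H^1}\to0$ and then $\|\bar u_0\|_{L^\infty}\to0$, so that shrinking $\bar a^*$ pushes the solution below the cutoff. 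This contradicts the paper's own structure: by Lemma \ref{non-increasing} the map $a\mapsto m(a)$ is \emph{non-increasing}, so the level does not shrink as the mass does, and by Lemma \ref{positive} every $u\in\mathcal M(a)$ satisfies $|\nabla u|_2^2\geq\varrho>0$ (with $\varrho$ that in fact grows as $a\to0^+$, since the Gagliardo--Nirenberg coefficients there are powers of $a$). So neither the energy nor the $H^1$ norm of the minimizer vanishes in the small-mass limit, and your route to $\bar u_0\leq K$ collapses. In the paper the smallness of $a$ plays an entirely different role: it is used only to guarantee that the Lagrange multiplier $\lambda_0$ is positive (Step 3 in the proofs of Theorems \ref{maintheorem1}--\ref{maintheorem2}), which is what restores strong $L^2$ convergence.

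The correct closing argument is a self-consistency (fixed-point) choice of the cutoff rather than a smallness argument in $a$: one first proves that the $L^\infty$ bound on the truncated solution is \emph{uniform in the cutoff parameter} $R$. Concretely, under the calibration $\bar\alpha_0 R^{\tau-\delta}\leq1$ (resp.\ $\tau\leq 2+\tfrac1R$) the growth constants in \eqref{22growth1}--\eqref{22growth2} and the upper bound $m^{R,\bar\delta}(a)\leq\bar m(a)$ (comparison with the $R$-independent functional built from $H$ alone) are independent of $R$; this gives $R$-independent bounds on $|\nabla\bar u_0^R|_2$, on $\lambda_0^R$, on $|A_0+A_1^2+A_2^2|_\infty$ (Lemma \ref{3A}), and hence on $|f^{R,\bar\delta}(\bar u_0^R)|_2$. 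Elliptic regularity ($H^2$ estimate plus $H^2(\R^2)\hookrightarrow L^\infty(\R^2)$) then yields $|\bar u_0^R|_\infty\leq C_0$ with $C_0$ independent of $R$, and one concludes by \emph{choosing} $R=C_0$ (and correspondingly $\bar\alpha_0^*=C_0^{\delta-\tau}$, resp.\ $\tau^*=2+1/\tilde C_0$). Your proposal does gesture at a uniform estimate ``with respect to the cutoff $K$ and the small mass $a$,'' but you never use the uniformity in $K$ to close the loop, relying instead on the false vanishing as $a\to0^+$; without the self-consistent choice of the cutoff the argument does not terminate.
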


\begin{remark}
Up to the best knowledge of us so far, except \cite{Shen2},
there are not any existence results
for CSS equation with supercritical exponential critical growth.
As a consequence, it seems the first paper to consider the existence of normalized solutions for
supercritical CSS equation in $\R^2$. We emphasize here that although the authors in \cite{AS}
showed that the supercritical Schr\"{o}dinger equation with Stein-Weiss convolution parts
has at least a normalized solution,
one cannot derive Theorem \ref{maintheorem3} trivially
because of the Chern-Simons term $\int_{\R^2} (A_1^2+A_2^2)|u|^2]dx$.
What's more, the absence of compact imbedding results in this paper cannot also be ignored,
see \cite[Lemma 2.3]{AS} in detail.
\end{remark}

Let us briefly  sketch the proofs for our main results.
To conclude the proofs of Theorems \ref{maintheorem1} and \ref{maintheorem2},
we firstly introduce some interesting results for the Chern-Simons term $\int_{\R^2} (A_1^2+A_2^2)|u|^2]dx$ in the literature
and the Trudinger-Moser inequality developed by Cao \cite{Cao}
and so that the energy functional $E$, defined in \eqref{functional},
is of class $\mathcal{C}^1$ over $S(a)$. Then, we shall
investigate some properties of $\mathcal{M}(a)$
including it is natural constrain manifold, namely any minimizer of $m(a)$ in \eqref{minimization}
is indeed a weak solution of Eq. \eqref{mainequation1} with a suitable $\lambda\in\R$.
In the meantime, with the help of the homotopy stable family,
we can prove that there is a sequence $\{u_n\}\subset \mathcal{M}(a)$
which is a Palais-Smale sequence for $E$ restricted on $S(a)$ at the level $m(a)$.
To get the proof of Theorem \ref{maintheorem1},
combing the Vanishing lemma in \cite{Willem}
and Br\'{e}zis-Lieb lemma as well as the monotonicity of $m(a)$, we have that $E(u_0)=m(a)$
and $\lim\limits_{n\to\infty}|\nabla u_n|_2^2=|\nabla u_0|_2^2$,
where $u_0$ is the weak limit of $\{u_n\}$ in $H^1(\R^2)$
along a subsequence. On the other hand, we derive the strong convergence of
 $\{u_n\}$ in $H^1(\R^2)$ by showing that the Lagrange multiplier $\lambda_0$
associated with $u_0$ is positive. Compared with the proof of Theorem \ref{maintheorem1}, in order
to prove Theorem \ref{maintheorem2}, we make the following three additional adjustments:
(a) We find a upper bound by the Moser sequence functions for $m(a)$ which is adopted to verify that the weak limit $u_0\neq0$.
(b) By taking advantage of $(f_2)$ jointly with the monotonicity of $m(a)$, we still prove that $E(u_0)=m(a)$
and $\lim\limits_{n\to\infty}|\nabla u_n|_2^2=|\nabla u_0|_2^2$. Let us stress here that this idea is also suitable for the
subcritical case in Theorem \ref{maintheorem1} and it should be reviewed as one of the striking novelties
the the present article. (c) As an application of $\lim\limits_{n\to\infty}|\nabla u_n|_2^2=|\nabla u_0|_2^2$,
we shall derive that $\lambda_0>0$ which leads to that $\bar{a}\mapsto m(\bar{a})$  is strictly decreasing
on a right neighborhood of $\bar{a}$, where $\bar{a}^2=|u_0|_2^2\in(0,a^2]$.
According to it, we must have $\bar{a}^2=a^2$ which reveals the strong convergence of $\{u_n\}$ in $H^1(\R^2)$
and so the proof is done.

Concerning the proof of Theorem \ref{maintheorem3},
due to a variational method point of
view, we continue to search for critical points for the variational functional $E:S(a)\to \R$.
Unfortunately, we couldn't even show that $E$ is well-defined over $S(a)$
 directly which is caused by the appearance the nonlinearity $f$ involving the supercritical
exponential growth \eqref{definition2} in the Trudinger-Moser sense.
Hence, the most imperative starting point is to deduce that $E$ is of class $\mathcal{C}^1$. Given a
fixed constant $R > 0$, followed by \cite{AS1,AS2}, we shall consider an auxiliary equation which possesses a
(sub)critical exponential growth. Speaking it clearly, introducing a cutoff function $f^{R,\bar{\delta}}$  which is
given by
\begin{equation} \label{fR}
f^{R, \bar{\delta} }(t)=
\left\{
\begin{array}{ll}
0,&    t\leq0,\\
	 h(t) e^{\bar{\alpha}_0 t^\tau},&    0\leq t \leq R,\\
	h(t) e^{\bar{\alpha}_0 R^{\tau-\bar{\delta}}t^{\bar{\delta}}}, &  t \geq R,
\end{array}
\right.
\end{equation}
and
\[
 \bar{\delta}\triangleq\left\{
   \begin{array}{ll}
    \delta, & \text{if the Case I in \eqref{definition2} is considered},\\
    2, & \text{if the Case II in \eqref{definition2} is considered}.
    \end{array}
 \right.
\]
Here, the function $h$ appears in \eqref{form} and the constant $\delta\in(0,2)$ is proposed in the condition $(h_4)$.

In consideration of such a nonlinearity $f^{R,\bar{\delta}}$, let us study the following auxiliary equation
\begin{equation}\label{mainequation2}
    \left\{
  \begin{array}{ll}
\displaystyle    -\Delta u +\lambda u+A_0 u+\sum\limits_{j=1}^2A_j^2 u= f^{R,\bar{\delta}}(u), \\
 \displaystyle     \partial_1A_2-\partial_2A_1=-\frac{1}{2}|u|^2,~\partial_1A_1+\partial_2A_2=0,\\
 \displaystyle    \partial_1A_0=A_2|u|^2,~ \partial_2A_0=-A_1|u|^2,
  \end{array}
\right.
\end{equation}
Using $(h_4)$, it is ready to observe that $f^{R,\bar{\delta}}$
admits the
subcritical or critical exponential growth at infinity for every fixed $R > 0$.
Consequently, with the help of Theorems \ref{maintheorem1} and \ref{maintheorem2},
we are able to establish the existence of normalized solutions for Eq. \eqref{mainequation2}
by finding critical points of the energy functional $E^{R,\bar{\delta}}$ which is defined by
\begin{equation}\label{functional2}
E^{R,\bar{\delta}}(u)=\frac12\int_{\R^2}[|\nabla u|^2 + (A_1^2+A_2^2)|u|^2]dx-\int_{\R^2}F^{R,\bar{\delta}}(u)dx
\end{equation}
over a suitable subset of $S(a)$,
where $R>0$ is a fixed constant determined later and $F^{R,{\bar\delta}}(t)=\int_0^tf^{R,{\bar\delta}}(s)ds$ for all $t\in\R$.
Furthermore, it could simply contemplate that if a couple of weak solution
$(\bar{u}_0^R,\bar{\lambda}_0^R)$ of problems \eqref{mainequation2} and \eqref{mainequation1a}
satisfying $|\bar{u}_0^R|_\infty\leq R$, then it is a
couple of weak solution of problems \eqref{mainequation1}-\eqref{mainequation1a}.
Have this in mind, we invite the reader to acquaint that one should
construct such a couple of weak solution
$(\bar{u}_0^R,\bar{\lambda}_0^R)$
to conclude the proof of Theorem \ref{maintheorem3}. As a consequence, it is necessary to get the following result.

 \begin{theorem}\label{maintheorem4}
 Suppose that the nonlinearity $f$ defined in \eqref{form}
requires $(h_1)-(h_4)$, then for every fixed $R>0$,
there is a small $\bar{a}^*_R>0$ such that,
for each $\tau>2$, there exists a $\bar{\alpha}^*_0=\bar{\alpha}_0(\tau)>0$ such that
problems \eqref{mainequation2} and \eqref{mainequation1a}  admit a couple of weak solution
$(\bar{u}_0^R,\bar{\lambda}_0^R)$ for every fixed $\bar{\alpha}_0 \in (0, \bar{\alpha}_0^*)$ and $a\in(0,\bar{a}^*_R]$.
Moreover, if in addition
\begin{description}
 \item[$(h_5)$] there are some constants $\xi>0$ and $p>4$ such that $h(t)\geq \xi t^{p-1}$ for all $t\in[0,+\infty)$,
\end{description}
then there is a small $\bar{a}_*^R>0$ such that,
given a
$\bar{\alpha}_0>0$, there exist $\tau_*=\tau(\bar{\alpha}_0)>2$ and ${\xi}_0^R>0$
such that problems \eqref{mainequation2} and \eqref{mainequation1a} have a couple of weak solution $(\bar{u}_0^R,\bar{\lambda}_0^R)$
 for every fixed $\tau \in [2,\tau_*)$,
  ${\xi}>{\xi}_0^R$ and $a\in(0,\bar{a}_*^R]$.
\end{theorem}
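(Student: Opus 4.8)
The plan is to deduce Theorem \ref{maintheorem4} as a direct consequence of Theorems \ref{maintheorem1} and \ref{maintheorem2}, by verifying that the truncated nonlinearity $f^{R,\bar\delta}$ of \eqref{fR} fits their hypotheses for a suitably chosen range of parameters. I would begin by classifying the growth of $f^{R,\bar\delta}$ at infinity. For $t\geq R$ one has $f^{R,\bar\delta}(t)=h(t)e^{\bar\alpha_0 R^{\tau-\bar\delta}t^{\bar\delta}}$, so the bound $|h(t)|\leq Me^{\gamma|t|^\delta}$ from $(h_4)$ gives $|f^{R,\bar\delta}(t)|\leq Me^{\gamma t^\delta}e^{\bar\alpha_0 R^{\tau-\bar\delta}t^{\bar\delta}}$. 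When $\bar\delta=\delta$ (Case I of \eqref{definition2}) the two exponents combine into $(\gamma+\bar\alpha_0 R^{\tau-\delta})t^\delta$ with $\delta<2$, so $f^{R,\delta}$ has subcritical exponential growth in the sense of \eqref{definitionsubcriticalgrowth}; when $\bar\delta=2$ (Case II) the dominant factor is $e^{\bar\alpha_0 R^{\tau-2}t^2}$, so $f^{R,2}$ has critical exponential growth in the sense of \eqref{definitioncriticalgrowth} with effective constant $\alpha_0=\bar\alpha_0 R^{\tau-2}$.

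Next I would transfer the structural conditions $(h_1)$--$(h_3)$ to $(f_1)$--$(f_3)$. Near the origin $f^{R,\bar\delta}(t)=h(t)(1+o(1))$, so $(h_1)$ yields $(f_1)$ with the same $\chi\geq4$. For the Ambrosetti--Rabinowitz inequality $(f_2)$ the key step is an integration by parts: splitting the integral at $R$ and taking $v=H$, one finds $F^{R,\bar\delta}(t)=H(t)e^{\bar\alpha_0 R^{\tau-\bar\delta}t^{\bar\delta}}-\bar\alpha_0\bar\delta R^{\tau-\bar\delta}\int_0^t H(r)r^{\bar\delta-1}e^{\bar\alpha_0 R^{\tau-\bar\delta}r^{\bar\delta}}\,dr$ (with the analogous formula and cancelling boundary terms on $[0,R]$), and since $H\geq0$ this gives $\theta F^{R,\bar\delta}(t)\leq\theta H(t)e^{\cdots}\leq t f^{R,\bar\delta}(t)$ by $(h_2)$, so $(f_2)$ holds with the same $\theta>4$. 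The monotonicity condition $(f_3)$ is the most delicate of the three: from the same representation one obtains $\bar F^{R,\bar\delta}(s)=\bar H(s)e^{\bar\alpha_0 R^{\tau-\bar\delta}s^{\bar\delta}}+2\bar\alpha_0\bar\delta R^{\tau-\bar\delta}\int_0^s H(r)r^{\bar\delta-1}e^{\cdots}\,dr$, and I would prove $s\mapsto\bar F^{R,\bar\delta}(s)/s^4$ is strictly increasing by showing $s(\bar F^{R,\bar\delta})'(s)-4\bar F^{R,\bar\delta}(s)>0$, combining $(h_3)$ (which gives $s\bar H'(s)-4\bar H(s)>0$) with the positivity of $h$ and of the exponential factor, using $\bar H(s)+2H(s)=h(s)s$ together with $(h_2)$ to absorb the integral remainder.

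In the subcritical regime (Part 1, $\bar\delta=\delta$) the above already places $f^{R,\delta}$ under the hypotheses of Theorem \ref{maintheorem1}, which then produces the couple $(\bar u_0^R,\bar\lambda_0^R)$ for all $a\in(0,\bar a_R^*]$. In the critical regime (Part 2, $\bar\delta=2$) I must additionally check $(f_4)$ and $(f_5)$: from the asymptotics $F^{R,2}(t)\sim \tfrac{h(t)}{2\alpha_0 t}e^{\alpha_0 t^2}$ as $t\to+\infty$ one gets $(f_4)$ with $\vartheta=1$ and a suitable $M_0$, while $(h_5)$, namely $h(t)\geq\xi t^{p-1}$ with $p>4$, forces $F^{R,2}(t)e^{-\alpha_0 t^2}\gtrsim \xi t^{p-2}\to+\infty$, so $(f_5)$ holds for any prescribed $\beta_0$ once $\xi$ exceeds a threshold $\xi_0^R$; Theorem \ref{maintheorem2} then applies and delivers $(\bar u_0^R,\bar\lambda_0^R)$.

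The main obstacle is quantitative rather than structural: the mass threshold $\bar a_R^*$ must be fixed before $\tau$ and $\bar\alpha_0$, yet the smallness thresholds furnished by Theorems \ref{maintheorem1}--\ref{maintheorem2} depend on the growth constants of $f^{R,\bar\delta}$, which scale with $\bar\alpha_0 R^{\tau-\bar\delta}$. Reconciling these requires tuning $\bar\alpha_0$ small (for fixed $\tau>2$) in Part 1, and, in Part 2, keeping the effective critical constant $\alpha_0=\bar\alpha_0 R^{\tau-2}$ below the admissible threshold in Cao's Trudinger--Moser inequality \cite{Cao}; for fixed $\bar\alpha_0$ this forces $\tau<\tau_*$, and it demands $\xi>\xi_0^R$ so that the Moser-sequence energy estimate underlying Theorem \ref{maintheorem2} remains below the compactness level. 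The Chern--Simons term $\int_{\R^2}(A_1^2+A_2^2)|u|^2\,dx$ is common to \eqref{mainequation1} and \eqref{mainequation2} and is already absorbed into Theorems \ref{maintheorem1}--\ref{maintheorem2}, but it is precisely what makes the smallness of the mass $\bar a_R^*$ unavoidable.
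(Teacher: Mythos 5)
Your overall strategy coincides with the paper's for the first half: the paper likewise verifies that $f^{R,\bar\delta}$ inherits $(f_1)$--$(f_3)$ from $(h_1)$--$(h_4)$ (its Lemmas \ref{AR} and \ref{monotone} are exactly your integration-by-parts computations for $(f_2)$ and $(f_3)$), establishes the growth bounds \eqref{2growth1}--\eqref{2growth4}, and then runs the machinery of Section \ref{Section2} to produce a Palais--Smale sequence; in the subcritical case $\bar\delta=\delta$ it then simply repeats the proof of Theorem \ref{maintheorem1}, as you propose. One point you should make explicit in the $(f_3)$ step: when you ``absorb the integral remainder'' $8\int_0^s H(r)g'(r)e^{g(r)}\,dr$ into $g'(s)s^2h(s)e^{g(s)}$ using $H\le \theta^{-1}ht$ and $g(s)=\bar\delta^{-1}sg'(s)$, the absorption constant is $8/(\theta\bar\delta)$, so this is precisely where the hypothesis $\delta\ge 8\theta^{-1}$ from $(h_4)$ (and $\theta>4$ when $\bar\delta=2$) is indispensable; your sketch does not name it.

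Where you genuinely diverge is the critical case $\bar\delta=2$. You verify $(f_4)$--$(f_5)$ for $f^{R,2}$ (with effective critical exponent $\bar\alpha_0R^{\tau-2}$) and invoke Theorem \ref{maintheorem2} as a black box, so the level estimate comes from the Moser sequence of Lemma \ref{estimate}. The paper instead re-proves the level estimate directly (Lemma \ref{2estimate}) with the Gaussian test function $\psi=a\pi^{-1/2}e^{-|x|^2/2}$ and $(h_5)$ with $\xi>\xi_0^R$, aiming at the \emph{smaller}, $R$-uniform threshold $2\pi/(\gamma+\bar\alpha_0R^{\tau-2})$, and redoes the boundedness and compactness steps with constants independent of $R$ (Lemma \ref{20bounded}). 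For Theorem \ref{maintheorem4} at fixed $R$ your route is legitimate and in fact delivers a formally stronger statement (since $(h_5)$ makes $\liminf F^{R,2}(s)e^{-\bar\alpha_0R^{\tau-2}s^2}=+\infty$, $(f_5)$ holds for every $\xi>0$, so the restrictions $\xi>\xi_0^R$ and $\tau<\tau_*$ are not needed to conclude existence). What you lose is exactly what the paper's version is engineered to provide: the mountain-pass level bound, the growth constants, and hence the $H^2$ and $L^\infty$ estimates all uniform in $R$, which is what permits choosing $R=C_0$ afterwards in the proof of Theorem \ref{maintheorem3}. Your closing paragraph slightly misattributes the role of $\xi_0^R$ (it is not needed for the Moser-sequence estimate of Theorem \ref{maintheorem2}; it is needed for the paper's Gaussian estimate against the stricter $\gamma$-dependent threshold), so if you keep your route you should either drop those restrictions or explain that they are only relevant for the subsequent uniform-in-$R$ analysis. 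Finally, both you and the paper gloss over the fact that $f^{R,\bar\delta}$ fails to be $C^1$ at $t=R$ when $\tau\neq\bar\delta$, which strictly speaking conflicts with $(f_1)$ as used in Lemma \ref{mnifold}; this is a defect shared with the source and not specific to your argument.
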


Again the results in Theorem \ref{maintheorem3} and \ref{maintheorem4} are new for the normalized solutions under the
Chern-Simons-Schr\"{o}dinger system setting. As described above, one of the key ingredients in the proof of Theorem \ref{maintheorem3}
is to derive the $L^\infty$-estimate for $\bar{u}_0^R$. It is widely known that
both the elliptic regular theory and Nash-Moser iteration procedure are the effective tools
and we shall argue as \cite{Shen2} by using the former one to reach the aim.
Alternatively, we have to exhibit some nontrivial calculations to show that
$(A_0+A_1^2+A_2^2)\in L^\infty(\R^2)$ when $\bar{u}_0^R\in H^1(\R^2)$, which differs from
the counterparts in \cite{Shen2}.

The outline of the present paper is organized as follows. In Section \ref{Section2}, we introduce the variational settings
and preliminaries. In Sections \ref{Section3} and \ref{Section4}, we will exhibit the proofs
of Theorems \ref{maintheorem1} and \ref{maintheorem2}, respectively. Section \ref{Section5}
is mainly devoted to the supercritical exponential case for problems \eqref{mainequation1}-\eqref{mainequation1a}.
\\\\
 \textbf{Notations.} From now on in this paper, otherwise mentioned, we use the following notations:
\begin{itemize}
	\item   $C,C_1,C_2,\bar{C}_1,\bar{C}_2,\cdots$ denote any positive constant, whose value is not relevant.
	 	\item      Let $(X,\|\cdot\|_X)$ be a Banach
space with dual space $(X^{-1},\|\cdot\|_{X^{-1}})$.
 \item  $|\,\,\,|_p$ denotes the usual norm of the Lebesgue space $L^{p}(\mathbb{R}^2)$, for every $p \in [1,+\infty]$,
	$\Vert\,\,\,\Vert_{H^{i}(\R^2)}$ denotes the usual norm of the Sobolev space $H^{i}(\mathbb{R}^2)$ for $i=1,2$.
\item $o_{n}(1)$ denotes a real sequence with $o_{n}(1)\to 0$ as $n \to +\infty$
and $\R^+\triangleq[0,+\infty)$.
\item ``$\to$" and ``$\rightharpoonup$" stand for the strong and weak convergence in the related function spaces, respectively.
\item The tangent space $S(a)$ at $u\in H^1(\R^2)$ is defined as
\[
\mathbb{T}_u=\left\{v\in H^1(\R^2):\int_{\R^2}uvdx=0\right\}.
\]
\item The norm of the $C^1$
restriction functional $E|_{S(a)}^\prime$ at $u\in H^1(\R^2)$ is defined by
\[
\|E|_{S(a)}^\prime\|_{ H^{-1}(\R^2) }=\sup_{v\in \mathbb{T}_u,\|v\|_{H^1(\R^2)}=1}E^\prime(u)[v].
\]
\end{itemize}

\section{Preliminary results}\label{Section2}

In this section, we are going to exhibit some preliminary results adopted to prove the main results.
To begin with, we shall give some useful observations. Owing to
 the second equation and the last two equations in \eqref{mainequation1}, for all $u\in
H^1(\mathbb{R}^2)$, one has
\begin{equation}\label{gauge0}
\begin{gathered}
\int_{\mathbb{R}^2}A_0|u|^2dx  =2\int_{\mathbb{R}^2}A_0(\partial_2A_1-\partial_1A_2)dx   \hfill\\
 \ \ \ \   =2\int_{\mathbb{R}^2}(A_2\partial_1A_0-A_1\partial_2A_0)dx
 =2\int_{\mathbb{R}^2}(A_1^2+A_2^2)|u|^2dx.\hfill\\
\end{gathered}
\end{equation}

It follows from
 the well-known Hardy-Littlewood-Sobolev inequality \cite[Theorem 4.3]{LM}, we can derive
the following estimates to the gauge fields $A_j$ for $j = 0, 1, 2$.

\begin{lemma}\label{gauge}
(see \cite[Propositions 4.2-4.3]{Huh2}) Assume $1<r<2$ and $\frac{1}{r}
-\frac{1}{\widehat{r}}=\frac{1}{2}$, then
\[
|A_j|_{\widehat{r}}\leq C_r|u|_{2r}^2~ \text{for}~ j=1,2,~
|A_0|_{\widehat{r}}\leq C_r|u|_{2r}^2|u|_4^2,
\]
where $C_r>0$ is a constant dependent of $r$.
\end{lemma}

With Lemma \ref{gauge} in hand, one can easily conclude that
\begin{equation}\label{gauge1}
  |A_ju|_2\leq |A_j|_{\widehat{r}}|u|_{\frac{r}{r-1}}
\leq C_r|u|_{2r}^2|u|_{\frac{r}{r-1}}\leq \bar{C}_r\|u\|_{H^1(\R^2)}^3,
~ \text{for}~ j=1,2,
\end{equation}
because $2r>2$ and $r/(r-1)>2$, where $\bar{C}_r>0$  depends only on  $r>1$.
We also need the following Br\'{e}zis-Lieb type lemma for the Chern-Simons term.
\begin{lemma}\label{BLCS}
(see \cite[Lemma 2.4]{GZ})
If $u_n\rightharpoonup u$ in $H^1(\R^2)$ and $u_n\to u$ a.e. in $\R^2$, then one has
$A_j [u_n]\to A_j[u]$ a.e. for $j=1,2$,
\begin{equation}\label{BLCS1}
 \left\{
   \begin{array}{ll}
  \displaystyle    \lim_{n\to\infty}\int_{\R^2}A_0[u_n]u_n\psi dx=\int_{\R^2}A_0[u ]u \psi dx,~\forall \psi\in H^1(\R^2),\\
   \displaystyle      \lim_{n\to\infty}\int_{\R^2}A_j^2[u_n]u_n\psi dx=\int_{\R^2}A_j^2[u ]u \psi dx,
   ~\forall \psi\in H^1(\R^2)~\text{\emph{with}}~j=1,2,
   \end{array}
 \right.
\end{equation}
 and
\begin{equation}\label{BLCS2}
\lim_{n\to\infty}\int_{\R^2}\big[A_j^2[u_n]|u_n|^2-A_j^2[u_n-u]|u_n-u|^2\big]dx
=\int_{\R^2}A_j^2[u]|u|^2dx,~\text{for}~j=1,2.
\end{equation}
\end{lemma}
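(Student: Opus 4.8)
The plan is to prove the three assertions in turn, exploiting the explicit convolution representations \eqref{CSS2d}, \eqref{CSS2e1} and \eqref{CSS2e2} together with the $L^p$-bounds of Lemma \ref{gauge}. First I would establish the pointwise a.e.\ convergence $A_j[u_n]\to A_j[u]$ for $j=1,2$. Fixing $x\in\R^2$ and writing $A_1[u_n](x)-A_1[u](x)$ as the integral of the kernel $(x_2-y_2)/|x-y|^2$ against $u_n^2-u^2$, I would split $\R^2$ into the ball $\{|x-y|\le 1\}$ and its complement. The kernel behaves like $|x-y|^{-1}$, so on the ball it lies in $L^q$ for every $q<2$, while on the complement it lies in $L^s$ for every $s>2$. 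On the ball, weak convergence in $H^1(\R^2)$ yields, via the local Rellich compactness, $u_n\to u$ strongly in $L^{q'}_{\mathrm{loc}}$ and hence $u_n^2\to u^2$ strongly there, so H\"older's inequality closes the near-field estimate; on the complement, the boundedness of $\{u_n\}$ in $H^1(\R^2)$ together with $u_n^2\to u^2$ a.e.\ gives $u_n^2\rightharpoonup u^2$ weakly in $L^{s'}$, which paired against the fixed kernel (belonging to $L^s$ there) settles the far-field convergence. The same argument applies to $A_2$, and feeding these into \eqref{CSS2d} yields the a.e.\ convergence of $A_0[u_n]$ as well.

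Next, for the weak-type limits \eqref{BLCS1}, I would couple the just-proved a.e.\ convergence with the uniform bounds $|A_j[u_n]|_{\widehat{r}}\le C_r|u_n|_{2r}^2$ and $|A_0[u_n]|_{\widehat{r}}\le C_r|u_n|_{2r}^2|u_n|_4^2$ from Lemma \ref{gauge}. Since $\{u_n\}$ is bounded in $H^1(\R^2)$, these estimates make the sequences $A_j^2[u_n]u_n$ and $A_0[u_n]u_n$ bounded in a fixed $L^t(\R^2)$ with $t>1$; together with a.e.\ convergence this gives uniform integrability against the fixed test function $\psi\in H^1(\R^2)$, so the Vitali convergence theorem (equivalently, a generalized dominated convergence argument) delivers \eqref{BLCS1}.

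The heart of the lemma is the Br\'ezis--Lieb splitting \eqref{BLCS2}, which I would attack through the bilinearity of the gauge map in $u^2$. Writing $v_n=u_n-u$ so that $v_n\rightharpoonup0$ in $H^1(\R^2)$, and noting that $u^2\mapsto A_j[u]$ is linear, I would introduce the symmetric bilinear form $B_j(\varphi,\psi)$ obtained by convolving $\varphi\psi$ with the corresponding kernel, so that $A_j[u_n]=A_j[v_n]+A_j[u]+2B_j(v_n,u)$. Substituting this into $A_j^2[u_n]|u_n|^2$ and into $A_j^2[v_n]|v_n|^2$, the difference expands into a finite sum of products, each of which contains at least one cross factor $B_j(v_n,u)$ or a factor $u$ paired against the weakly vanishing $v_n$. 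Repeated use of the scalar Br\'ezis--Lieb lemma (for the purely quadratic pieces, where $|u_n|^2-|v_n|^2\to|u|^2$) together with $v_n\rightharpoonup0$ and the estimates of Lemma \ref{gauge} should show that every cross term tends to zero, leaving exactly $\int_{\R^2}A_j^2[u]|u|^2dx$ in the limit.

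The main obstacle I anticipate is precisely the control of these cross terms in \eqref{BLCS2}: because $A_j^2[u_n]|u_n|^2$ is of degree six in $u_n$, the expansion produces many mixed terms, and each must be shown to vanish using only weak convergence of $v_n$ and the nonlocal Hardy--Littlewood--Sobolev bounds, rather than any compactness of the embedding $H^1(\R^2)\hookrightarrow L^p(\R^2)$. The delicate point is to choose, in each such term, H\"older exponents compatible both with the constraint $1<r<2$, $\tfrac1r-\tfrac1{\widehat{r}}=\tfrac12$ dictated by Lemma \ref{gauge} and with the weak convergence of the quadratic quantities, so that $B_j(v_n,u)\to0$ in the relevant Lebesgue norm and the remaining factors stay bounded.
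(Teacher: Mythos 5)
The paper itself offers no proof of this lemma: it is imported verbatim from Gou--Zhang \cite[Lemma 2.4]{GZ}, so there is no in-text argument to measure you against. Judged on its own terms, your sketch is a sound reconstruction of the standard proof. The near-field/far-field splitting of the kernel $(x_i-y_i)/|x-y|^2$ (in $L^q$ near the singularity for $q<2$, in $L^s$ at infinity for $s>2$) does give $A_j[u_n]\to A_j[u]$ pointwise; the limits \eqref{BLCS1} then follow because $A_j^2[u_n]u_n$ and $A_0[u_n]u_n$ are bounded in some $L^t$ with $t>1$ (Lemma \ref{gauge} plus the Sobolev embedding) and converge a.e., hence weakly in $L^t$, against the fixed $\psi\in L^{t'}$; and the expansion $A_j[u_n]=A_j[v_n]+A_j[u]+2B_j(v_n,u)$, $|u_n|^2=|v_n|^2+|u|^2+2v_nu$ reduces \eqref{BLCS2} to showing that finitely many cross terms vanish. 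Two points in that last step should be made explicit. First, the decisive fact is that $v_nu\to0$ \emph{strongly} in $L^r(\R^2)$ for $1\le r<2$, whence $B_j(v_n,u)\to0$ strongly in $L^{\widehat{r}}$ by Hardy--Littlewood--Sobolev; this is proved by combining the compact embedding $H^1(B_R)\hookrightarrow L^p(B_R)$ with a tail estimate supplied by the fixed factor $u$, so your closing remark that no compactness is invoked should be understood as referring only to the global embedding --- local Rellich compactness is genuinely used both here and in the a.e.\ convergence step. Second, for cross terms such as $\int A_j[v_n]A_j[u]|v_n|^2\,dx$, in which two factors vanish only weakly, the correct mechanism is that the \emph{product} of the vanishing factors tends to zero a.e.\ and is bounded in some $L^m$ with $m>1$, hence converges weakly to zero against the remaining fixed $L^{m'}$ factor; pairing the vanishing factors one at a time would not suffice. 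With these clarifications your outline closes, and it is, in substance, the argument of \cite{GZ}.
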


Caused by the presence of (sub)critical exponential growth at infinity associated with the nonlinearity $f$,
  let us introduce the following type of Trudinger-Moser inequality due to Cao \cite{Cao}.
\begin{lemma}\label{TM}
For any $u\in H^1(\R^2)$ and for all $\alpha>0$, there holds
 \begin{equation}\label{TM1}
 \int_{\R^2}(e^{\alpha|u|^2}-1)dx<+\infty.
 \end{equation}
Moreover, if
$|\nabla u|_2\leq 1$, $|u|_2\leq \bar{M}$ for some $\bar{M}\in(0,+\infty)$ and $\alpha<4\pi$, then there exists a constant $C=C(\bar{M},\alpha)>0$ such that
\begin{equation}\label{TM2}
\int_{\R^2}(e^{\alpha|u|^2}-1)dx\leq C.
\end{equation}
\end{lemma}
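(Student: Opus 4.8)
The plan is to reduce both assertions to the classical Moser--Trudinger inequality on bounded domains by a truncation-and-rearrangement argument, exploiting the fact that the high region $\{|u|>1\}$ has finite measure. Throughout I would split $\R^2=\{|u|\le1\}\cup\{|u|>1\}$ and treat the two pieces separately, since all the difficulty is concentrated on the second one.

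First I would dispose of the low region by an elementary pointwise bound. For $0\le s\le1$ one has $e^{\alpha s}-1\le(e^\alpha-1)s$ (because $s^k\le s$ for every $k\ge1$), so taking $s=u^2$ gives
\[
\int_{\{|u|\le1\}}(e^{\alpha u^2}-1)\,dx\le(e^\alpha-1)\int_{\R^2}|u|^2\,dx=(e^\alpha-1)|u|_2^2,
\]
which is finite for any $u\in H^1(\R^2)$ and is bounded by $(e^\alpha-1)\bar{M}^2$ under the hypotheses of the second part. On the high region I would introduce the truncation $v\triangleq(|u|-1)_+$, which lies in $H^1(\R^2)$, satisfies $|\nabla v|_2\le|\nabla u|_2$, and is supported on $\{|u|>1\}$; since $u\in L^2$, Chebyshev's inequality yields $|\{|u|>1\}|\le|u|_2^2<+\infty$. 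The structural step is then the symmetric decreasing rearrangement: letting $v^*$ be the rearrangement of $v$ on the ball $B$ with $|B|=|\{|u|>1\}|$, the P\'olya--Szeg\H{o} inequality gives $v^*\in H^1_0(B)$ with $|\nabla v^*|_2\le|\nabla v|_2$, while equimeasurability preserves the nonlinear integral, $\int(e^{\beta v^2}-1)\,dx=\int(e^{\beta(v^*)^2}-1)\,dx$.

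For \eqref{TM1} it then suffices to note that $u^2=(v+1)^2\le2v^2+2$ on $\{|u|>1\}$, whence $e^{\alpha u^2}\le e^{2\alpha}e^{2\alpha v^2}$, and to invoke the basic (non-sharp) Trudinger finiteness $\int_B e^{\gamma(v^*)^2}\,dx<+\infty$, valid for every $\gamma>0$ and every $v^*\in H^1_0(B)$. Combined with the low-region estimate this yields \eqref{TM1} for arbitrary $\alpha>0$, with no normalization of $u$ needed.

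For the uniform estimate \eqref{TM2} the point is to spend the strict gap $\alpha<4\pi$. Writing $u^2=(v+1)^2$ on $\{|u|>1\}$ and using $2v\le\eta v^2+1/\eta$, I obtain $\alpha u^2\le\alpha(1+\eta)v^2+\alpha(1+1/\eta)$, and I fix $\eta>0$ so small that $\beta\triangleq\alpha(1+\eta)<4\pi$. Passing to $v^*$ and applying the sharp bounded-domain Moser--Trudinger inequality, which is legitimate because $|\nabla v^*|_2\le|\nabla u|_2\le1$ and $\beta<4\pi$, gives $\int_B e^{\beta(v^*)^2}\,dx\le C|B|\le C\bar{M}^2$, hence
\[
\int_{\{|u|>1\}}e^{\alpha u^2}\,dx\le e^{\alpha(1+1/\eta)}\int_B e^{\beta(v^*)^2}\,dx\le C(\bar{M},\alpha).
\]
Adding the low-region contribution and subtracting the finite measure $|\{|u|>1\}|$ then produces the constant $C(\bar{M},\alpha)$ asserted in \eqref{TM2}. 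The main obstacle is precisely this last balancing: the cross term $2v$ forces the loss $\alpha\mapsto\alpha(1+\eta)$, so the argument only survives when $\alpha<4\pi$ \emph{strictly}, and at the endpoint $\alpha=4\pi$ one would instead need the sharper full-$H^1$-norm (Ruf-type) inequality. The remaining points---verifying $|\nabla v|_2\le|\nabla u|_2$ for the truncation and checking the rearrangement identities for $t\mapsto e^{\beta t^2}-1$---are routine.
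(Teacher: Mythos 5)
Your argument is correct and complete. For the record, the paper itself gives no proof of this lemma: it is stated as a known result and attributed to Cao \cite{Cao}, so there is nothing internal to compare against. What you have written is essentially the standard derivation of Cao's inequality, and every step checks out: the elementary bound $e^{\alpha s}-1\le(e^{\alpha}-1)s$ on $[0,1]$ handles the low region; Chebyshev gives $|\{|u|>1\}|\le|u|_2^2$; the truncation $v=(|u|-1)_+$ satisfies $|\nabla v|\le|\nabla u|$ a.e.; P\'olya--Szeg\H{o} and equimeasurability transport the problem to $H^1_0(B)$ (a function of $H^1(\R^2)$ vanishing a.e.\ outside a ball does lie in $H^1_0$ of that ball, so this passage is legitimate); and the Young-type splitting $2v\le\eta v^2+1/\eta$ is exactly where the strict inequality $\alpha<4\pi$ is consumed before invoking Moser's sharp bounded-domain estimate $\int_B e^{4\pi w^2}\le C|B|$ for $|\nabla w|_2\le1$. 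The only cosmetic point worth noting is that in the first part you do not even need the rearrangement: the qualitative fact $\int_\Omega e^{\gamma w^2}<\infty$ for $w\in H^1_0(\Omega)$, $\Omega$ bounded, already applies to $v$ restricted to any large ball containing most of its mass after a further decomposition, but passing through $v^*$ as you do is perfectly fine and keeps the two parts parallel. In short: a correct, self-contained proof of a result the paper merely cites.
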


Combining \eqref{definitionsubcriticalgrowth} and $(f_1)$,
for all $\varepsilon>0$ and $\alpha>0$, there is a constant $C_\varepsilon>0$ such that
\begin{equation}\label{growth1}
  |f(s)|\leq \varepsilon |s|^{\chi-1}+C_\varepsilon|s|^{q-1}(e^{\alpha |s|^2}-1),~\forall s\in\R,
\end{equation}
where $q>2$ can be arbitrarily chosen later. Using $(f_2)$, there holds
\begin{equation}\label{growth2}
  |F(s)|\leq \varepsilon |s|^{\chi }+C_\varepsilon|s|^{q }(e^{\alpha |s|^2}-1),~\forall s\in\R.
\end{equation}
Similarly, if the nonlinearity $f$ has a critical exponential growth at infinity with the critical exponent $\alpha_0$
appearing in \eqref{definitioncriticalgrowth}. Then fix $q>2$ as above,
for all $\varepsilon>0$ and $\alpha>\alpha_0$, there is a constant $C_\varepsilon>0$ such that
\begin{equation}\label{growth3}
  |f(s)|\leq \varepsilon |s|^{\chi-1}+C_\varepsilon|s|^{q-1}(e^{\alpha |s|^2}-1),~\forall s\in\R,
\end{equation}
and
\begin{equation}\label{growth4}
  |F(s)|\leq \varepsilon |s|^{\chi }+C_\varepsilon|s|^{q }(e^{\alpha |s|^2}-1),~\forall s\in\R.
\end{equation}

Now, recallng the imbedding $H^1(\R^2)\hookrightarrow L^p(\R^2)$ is continuous for all $2\leq p<+\infty$
and adopting \eqref{gauge1}, we can apply \eqref{growth2} and \eqref{growth4}
in \eqref{TM1} to verify that the energy functional $E$, defined in \eqref{functional},
is of $\mathcal{C}^1$ class on $S(a)$ whose derivative can be computed as
\[
E^\prime(u)[v]=
\int_{\R^2} \big[\nabla u \nabla v
+(A_1^2+A_2^2+A_0)uv\big]dx - \int_{\R^2}f( u)vdx
\]
for any $u,v\in S(a)$. In particular, with the help of \eqref{gauge0}, one has
\[
E^\prime(u)[u]= \int_{\R^2}\big[|\nabla u|^2 +3(A_1^2+A_2^2)|u|^2\big]dx
- \int_{\R^2}f(u)udx.
\]
Therefore if $(u,\lambda)\in S(a)\times\R$ is a critical point of $E$, then the
quintuplet $\big(u,A_0[u],A_1[u],A_2[u],\lambda\big)$ is a (weak)
 solution of system \eqref{mainequation1}.

Next, we shall introduce the so-called Poho\u{z}aev identity for Eq. \eqref{mainequation1}
which is more general but easier than its counterpart in \cite[Propostion 2.3]{Byeon}.

\begin{lemma}\label{Pohozaev}
Suppose that $f$ satisfies \eqref{definitionsubcriticalgrowth} and $(f_1)$, or
\eqref{definitioncriticalgrowth} and $(f_1)$. Let $u\in H^1(\R^2)$ be a
nontrivial weak solution of Eq. \eqref{mainequation1} with some suitable $\lambda>0$,
then it satisfies the so-called Poho\u{z}aev identity below
\begin{equation}\label{Pohozaev1}
 \lambda\int_{\R^2}|u|^2dx
 +2\int_{\R^2}(A_1^2+A_2^2)|u|^2dx-2\int_{\R^2}F(u)dx
=0.
\end{equation}
\end{lemma}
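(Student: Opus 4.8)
The plan is to derive \eqref{Pohozaev1} by a pure dilation argument, which sidesteps the delicate integration by parts on the non-local gauge terms and accounts for the remark that this proof is \emph{easier} than its counterpart in \cite{Byeon}. First I would record how each ingredient of the energy transforms under the dilation $u_t(x):=u(tx)$, $t>0$. A change of variables gives the conformal invariance $\int_{\R^2}|\nabla u_t|^2\,dx=\int_{\R^2}|\nabla u|^2\,dx$ in dimension two, together with $\int_{\R^2}|u_t|^2\,dx=t^{-2}\int_{\R^2}|u|^2\,dx$ and $\int_{\R^2}F(u_t)\,dx=t^{-2}\int_{\R^2}F(u)\,dx$. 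The only nontrivial homogeneity is that of the Chern-Simons term: since the kernels defining $A_1,A_2$ in \eqref{CSS2e1}--\eqref{CSS2e2} are homogeneous of degree $-1$, a change of variables yields the scaling law $A_j[u_t](x)=t^{-1}A_j[u](tx)$ for $j=1,2$; squaring, multiplying by $u_t^2$ and integrating then produces
\[
\int_{\R^2}\big(A_1^2[u_t]+A_2^2[u_t]\big)|u_t|^2\,dx=t^{-4}\int_{\R^2}(A_1^2+A_2^2)|u|^2\,dx .
\]

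Next I would introduce the functional $\mathcal{E}_\lambda(u):=E(u)+\tfrac{\lambda}{2}\int_{\R^2}|u|^2\,dx$. The weak formulation of \eqref{mainequation1}, combined with the expression for $E'(u)$ given before Lemma \ref{Pohozaev} (whose $A_0$ contribution comes from \eqref{gauge0}), shows that a weak solution $u$ is a \emph{free} critical point, namely $\mathcal{E}_\lambda'(u)=0$. Setting $g(t):=\mathcal{E}_\lambda(u_t)$ and inserting the four scaling laws above gives the explicit formula
\[
g(t)=\tfrac12\int_{\R^2}|\nabla u|^2\,dx+\tfrac{\lambda}{2}t^{-2}\int_{\R^2}|u|^2\,dx+\tfrac12 t^{-4}\int_{\R^2}(A_1^2+A_2^2)|u|^2\,dx-t^{-2}\int_{\R^2}F(u)\,dx .
\]
Differentiating in $t$ and evaluating at $t=1$ gives $g'(1)=-\lambda\int_{\R^2}|u|^2\,dx-2\int_{\R^2}(A_1^2+A_2^2)|u|^2\,dx+2\int_{\R^2}F(u)\,dx$, so that \eqref{Pohozaev1} is exactly the statement $g'(1)=0$.

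The crux, and the step I expect to be the main obstacle, is justifying $g'(1)=0$, i.e. that $g'(1)=\mathcal{E}_\lambda'(u)\big[\tfrac{d}{dt}u_t\big|_{t=1}\big]=\mathcal{E}_\lambda'(u)[\,x\cdot\nabla u\,]=0$. This requires that $t\mapsto u_t$ be differentiable in $H^1(\R^2)$ at $t=1$ with derivative $x\cdot\nabla u$, equivalently that $x\cdot\nabla u$ be an admissible test function. I would secure this via elliptic regularity: the gauge potentials are bounded through Lemma \ref{gauge} and \eqref{gauge1}, while $f(u)$ is controlled by the Trudinger-Moser estimate of Lemma \ref{TM} together with \eqref{growth1}/\eqref{growth3}, so a bootstrap gives $u\in H^2(\R^2)\cap C^{1,\alpha}_{\mathrm{loc}}$ with enough decay to place $x\cdot\nabla u\in H^1(\R^2)$. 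A truncation by cut-off functions $\eta(\cdot/n)$ and passage to the limit as $n\to+\infty$ then legitimizes the use of $x\cdot\nabla u$ in the weak formulation, after which the explicit homogeneities reduce the identity to the elementary computation of $g'(1)$ above. I would pay particular attention to the growth and integrability bookkeeping that guarantees all scaled integrals are finite and that the boundary terms generated by the truncation vanish in the limit.
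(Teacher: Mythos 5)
Your scaling computation is correct and well targeted: with $u_t(x)=u(tx)$ the Dirichlet term is invariant, the $L^2$ and $F$ terms scale like $t^{-2}$, and $A_j[u_t](x)=t^{-1}A_j[u](tx)$ does give the $t^{-4}$ law for the Chern--Simons term, so $g'(1)=0$ is exactly \eqref{Pohozaev1}. The gap is in the justification of $g'(1)=0$. The chain-rule identity $g'(1)=\mathcal{E}_\lambda'(u)[x\cdot\nabla u]$ requires $t\mapsto u_t$ to be differentiable as a curve in $H^1(\R^2)$ at $t=1$, hence $x\cdot\nabla u\in H^1(\R^2)$, i.e. $|x|\,|\nabla u|$ and $|x|\,|D^2u|$ in $L^2(\R^2)$. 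This is a weighted decay statement that does not follow from the elliptic bootstrap you invoke: Br\'ezis--Kato plus Calder\'on--Zygmund give $u\in W^{2,s}_{\mathrm{loc}}(\R^2)$ (which is what the paper actually establishes and all it needs), but no control of $|x|\,|\nabla u|$ at infinity. For a general $H^1$ weak solution, at this stage of the paper no decay of $\nabla u$ is known, so the claim that $x\cdot\nabla u$ is an admissible test function is unsubstantiated.

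Your fallback --- truncating by $\eta(\cdot/n)$ and passing to the limit --- is exactly what the paper does (Lemma A.1: the Kavian truncation with test function $\psi_n\,(x,\nabla u)$), but once you go that route the ``explicit homogeneities'' no longer do the work for you: you must rewrite each term of $\psi_n(x,\nabla u)\{-\Delta u+\lambda u+\phi_u u-f(u)\}$ via pointwise divergence identities and control the error terms on the annuli where $\nabla\psi_n\neq0$. The only non-routine ingredient there is the non-local one, namely
\[
\int_{\R^2}(x,\nabla\phi_u)\,u^2\,dx=-2\int_{\R^2}\big(A_1^2[u]+A_2^2[u]\big)u^2\,dx,\qquad \phi_u=A_0[u]+A_1^2[u]+A_2^2[u],
\]
which encodes the same homogeneity you observed but must be proved directly from \eqref{CSS2d}--\eqref{CSS2e2} together with \eqref{gauge0}; your proposal does not address it. So either you supply genuine weighted decay estimates for $\nabla u$ (not available here), or you carry out the truncated integration by parts including the identity above --- at which point you have reproduced the paper's proof rather than bypassed it.
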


\begin{proof}
We postpone its
 detailed proof in Lemma A.1 in the Appendix below.
\end{proof}

As a direct consequence of Lemma \ref{Pohozaev}, we know that each nontrivial weak solution of Eq. \eqref{mainequation1}
is contained in the set $\mathcal{M}(a)$. Indeed, let $u$ be a test function on Eq. \eqref{mainequation1}, there holds
\begin{equation}\label{Nehari}
\int_{\R^2}\big[|\nabla u|^2 +3(A_1^2+A_2^2)|u|^2\big]dx+ \lambda\int_{\R^2}|u|^2dx
- \int_{\R^2}f(u)udx
=0.
\end{equation}
One immediately derives $J(u)=0$ if \eqref{Nehari} minuses \eqref{Pohozaev1}. In other words, the set $\mathcal{M}(a)$
is a natural constraint to find normalized ground state solutions for problems \eqref{mainequation1}
and \eqref{mainequation1a}.

In the sequel, we mainly focus on establishing some properties for $\mathcal{M}(a)$.
Because the subcritical case for the nonlinearity $f$ is much simpler than the critical one,
let us only consider that $f$ possesses the critical exponential growth at infinity.
From now on until the end of this section, without loss of generality,
 we would always suppose that $f$
satisfies \eqref{definitioncriticalgrowth} and $(f_1)-(f_3)$ for simplicity.

\begin{lemma}\label{unique} Let $a\neq0$.
For every $u\in S(a)$, there exists a unique $t_u>0$ such that $t_uu(t_u\cdot)\in \mathcal{M}(a)$
and $E(u_{t_u})=\max\limits_{t>0}E(u_t)$, where $u_t=tu(t\cdot)$ for all $t>0$.
Moreover, the map $S(a)\to (0,+\infty)$ defined by $u\mapsto t_u$ is continuous.
\end{lemma}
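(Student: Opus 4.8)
The plan is to reduce everything to the study of the fiber map $\gamma(t)\triangleq E(u_t)$ on $(0,+\infty)$, where $u_t=tu(t\cdot)$. First I would record the scaling identities. A direct change of variables gives $u_t\in S(a)$ and, crucially, $A_j[u_t](x)=tA_j[u](tx)$ for $j=1,2$, so that both the Dirichlet term and the Chern-Simons term scale like $t^2$; writing $B(u)\triangleq\int_{\R^2}[|\nabla u|^2+(A_1^2+A_2^2)|u|^2]dx>0$ one recovers the explicit formula
\[
\gamma(t)=\frac{t^2}{2}B(u)-t^{-2}\int_{\R^2}F(tu)dx,
\]
already displayed after Theorem \ref{maintheorem1}. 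Differentiating and multiplying by $t$, a short computation with $\bar F(s)=f(s)s-2F(s)$ yields the key identity $t\gamma'(t)=J(u_t)=t^2\big[B(u)-\Phi(t)\big]$, where $\Phi(t)\triangleq t^{-4}\int_{\R^2}\bar F(tu)dx=\int_{\{u>0\}}\frac{\bar F(tu)}{(tu)^4}u^4\,dx$. In particular $u_t\in\mathcal{M}(a)$ iff $\gamma'(t)=0$ iff $\Phi(t)=B(u)$, which converts the whole lemma into the analysis of the single scalar function $\Phi$.

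Next I would establish the three qualitative properties of $\Phi$. Strict monotonicity is immediate from $(f_3)$: since $s\mapsto\bar F(s)/s^4$ is strictly increasing on $(0,+\infty)$, the integrand $\frac{\bar F(tu(x))}{(tu(x))^4}u^4(x)$ is strictly increasing in $t$ at every point of $\{u>0\}$, hence so is $\Phi$. For the endpoint behaviour, $(f_1)$ gives $\bar F(s)=o(s^4)$ as $s\to0$, whence $\Phi(0^+)=0$ (dominated convergence, with monotonicity supplying the dominating function $\bar F(u)$), while $(f_2)$ forces $F(s)\geq c_0s^\theta$ and $\bar F(s)\geq(\theta-2)F(s)\geq(\theta-2)c_0 s^\theta$ for large $s$ with $\theta>4$, so that $\bar F(tu)/(tu)^4\to+\infty$ pointwise on $\{u>0\}$ and Fatou gives $\Phi(+\infty)=+\infty$. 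Continuity and strict monotonicity of $\Phi$ with these limits produce a unique $t_u>0$ solving $\Phi(t_u)=B(u)$, i.e. a unique $t_u$ with $u_{t_u}\in\mathcal{M}(a)$; moreover $\gamma'(t)=t[B(u)-\Phi(t)]$ is positive on $(0,t_u)$ and negative on $(t_u,+\infty)$, so $\gamma$ increases then decreases and $E(u_{t_u})=\max_{t>0}E(u_t)$. (Here one uses $u^+\not\equiv0$, i.e. $|\{u>0\}|>0$; this is the only relevant case, since $F\equiv0$ on $(-\infty,0]$ and we seek positive solutions.)

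Finally, for the continuity of $u\mapsto t_u$ I would argue by sequences: if $u_n\to u$ in $H^1(\R^2)$, set $t_n=t_{u_n}$. The first step is to trap $\{t_n\}$ in a compact subinterval of $(0,+\infty)$. A uniform lower bound follows from the growth estimate \eqref{growth2} (applied to $\bar F$) together with Lemma \ref{TM}: if $t_n\to0$ then $B(u_n)=\Phi_{u_n}(t_n)\to0$, contradicting $B(u_n)\to B(u)>0$; a uniform upper bound follows from the coercive lower bound $\bar F(s)\geq(\theta-2)c_0s^\theta$ and $u_n\to u$ in $L^\theta(\R^2)$, which force $\Phi_{u_n}(t_n)\to+\infty$ if $t_n\to+\infty$, again contradicting the boundedness of $B(u_n)$. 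Passing to any subsequence with $t_{n_k}\to t^*\in(0,+\infty)$ and letting $k\to\infty$ in $\Phi_{u_{n_k}}(t_{n_k})=B(u_{n_k})$, the continuity of $u\mapsto B(u)$ (gradient term plus the Chern-Simons term, the latter continuous on $H^1$ by Lemma \ref{gauge} and \eqref{gauge1}) and of $(t,u)\mapsto\int\bar F(tu)dx$ (from the Trudinger-Moser estimates behind the $\mathcal{C}^1$ regularity of $E$) give $\Phi_u(t^*)=B(u)$, so $t^*=t_u$ by uniqueness; as every subsequence admits a further subsequence converging to $t_u$, the whole sequence converges. The main obstacle is precisely this last paragraph: obtaining the $n$-uniform two-sided bounds on $t_n$ and justifying the passage to the limit in the exponential nonlinear term and in the nonlocal Chern-Simons term, which is where Lemma \ref{TM} and the Hardy-Littlewood-Sobolev estimates of Lemma \ref{gauge} and \eqref{gauge1} carry the weight; the structural part (existence, uniqueness, maximality) is essentially forced by $(f_1)$--$(f_3)$.
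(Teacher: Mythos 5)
Your proposal is correct and follows essentially the same route as the paper: both analyze the fiber map $t\mapsto E(u_t)$, characterize $u_t\in\mathcal{M}(a)$ by the vanishing of its derivative, obtain existence from the endpoint behaviour forced by $(f_1)$, $(f_2)$ and the Trudinger--Moser/Gagliardo--Nirenberg estimates, uniqueness from the strict monotonicity in $(f_3)$ (your $\Phi(t)=t^{-4}\int_{\R^2}\bar F(tu)\,dx$ is exactly the quantity the paper equates to $B(u)$ via $J(u_t)=0$), and continuity of $u\mapsto t_u$ by trapping $\{t_{u_n}\}$ in a compact subinterval of $(0,+\infty)$ and passing to the limit with the convergence lemma for the nonlinear terms. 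The only cosmetic difference is that you package existence and uniqueness together as the intermediate value theorem for the increasing bijection $\Phi$, whereas the paper first shows $E(u_t)>0$ for small $t$ and $E(u_t)\to-\infty$, then invokes $(f_3)$ separately.
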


\begin{proof}
Let $u\in S(a)$ be fixed and define the function
 $\varsigma(t)=E(u_{t})$ for any $t>0$. Since $u_t\in S(a)$, we simply have that
\begin{align*}
\varsigma^\prime(t)=0  & \Longleftrightarrow t\int_{\R^2}[|\nabla u|^2 + (A_1^2+A_2^2)|u|^2]dx-t^{-3}\int_{\R^2}[f(tu)tu-2F(tu)]dx=0\\
& \Longleftrightarrow \frac1t J(u_t)=0  \Longleftrightarrow  J(u_t)=0  \Longleftrightarrow u_t\in \mathcal{M}(a).
\end{align*}
We claim that $\varsigma(t)>0$ for some sufficiently small $t>0$ and $\lim\limits_{t\to+\infty}\varsigma(t)=-\infty$.
Firstly, let us recall the celebrated Gagliardo-Nirenberg inequality
\begin{equation}\label{GN}
  |u|_p^p\leq C_{p}|\nabla u|_2^{p-2}|u|_2^2,~\forall u\in H^1(\R^2)~\text{and}~2<p<+\infty.
\end{equation}
Let $t_1>0$ small enough satisfy $t_1^2|\nabla u|_2^2<\frac{\pi}{\alpha}$ with $\alpha>\alpha_0$ appearing in \eqref{growth4},
then we can define $\bar{u}=\sqrt{\frac{\alpha}{\pi}}t_1u$ which shows that $|\nabla \bar{u}|_2^2\leq1$
and $|\bar{u}|_2^2=\frac{\alpha}{\pi}t_1^2a^2<+\infty$, using \eqref{TM2} to obtain
\begin{equation}\label{unique1}
\int_{\R^2} (e^{2\alpha t_1^2|u|^2}-1)dx
=\int_{\R^2} (e^{2\pi  |\bar{u}|^2}-1)dx\leq C<+\infty
\end{equation}
which together with the Gagliardo-Nirenberg inequality \eqref{GN} implies that
\begin{align*}
 \varsigma(t_1) & \geq \frac{t^2_1}2\int_{\R^2} |\nabla u|^2dx-\varepsilon  t_1^{\chi-2 }\int_{\R^2} |u|^\chi dx
-C_\varepsilon  t_1^{q-2}\int_{\R^2}|u|^{q }(e^{\alpha t_1^2|u|^2}-1)dx\\
   & \geq\frac{t_1^2}2\int_{\R^2} |\nabla u|^2dx
-\varepsilon a^2t_1^{\chi-2 }\bigg(
\int_{\R^2}|\nabla u|^{2}dx\bigg)^{\frac{\chi-2}{2}}-\bar{C}_\varepsilon
at_1^{q-2}\bigg(\int_{\R^2}|\nabla u|^{2}dx\bigg)^{\frac{q-1}{2}},
\end{align*}
where we need to require $q>4$ in \eqref{growth4} in this situation. Therefore, we choose $\varepsilon=\frac1{4a^2}$
if $\chi=4$ and $\varepsilon=1$ if $\chi>4$ to see there is a sufficiently small $t_2\in(0,t_1)$ such that $\varsigma(t_2)>0$.
According to $(f_2)$, it is very clear to show that $\varsigma(t)\to-\infty$ as $t\to\infty$ since $\theta>4$. Thanks to the claim,
it permits us to find a such $t_u>0$ and its uniqueness follows directly from $(f_3)$.

Invoking from the above arguments, the map $u\mapsto t_u$ is well-defined.
Suppose that $\{u_n\}\subset S(a)$ is a sequence satisfying $u_n\to u_0$ in $H^1(\R^2)$,
then we only need to conclude that $t_{u_n}\to t_{u_0}$ in $(0,+\infty)$ along a subsequence,
where $t_{u_n}u_n(t_{u_n}\cdot), t_{u_0}u_0(t_{u_0}\cdot)\in \mathcal{M}(a)$.
Note that $t_{u_0}$ exists since $|u_0|_2^2=a^2>0$ which shows that $u_0\neq0$.
On the one hand, we claim that $\{t_n\}$ is uniformly bounded from above. Indeed,
since $J(t_{u_n}u_n(t_{u_n}\cdot))=0$ and $(f_2)$, there holds
\[
\int_{\R^2}[|\nabla u_n|^2 + (A_1^2+A_2^2)|u_n|^2]dx= \int_{\R^2}\left[\frac{f(t_nu_n)u_n-2F(t_nu_n)}{(t_nu_n)^4}\right]|u_n|^4dx
\geq (\theta-2)\int_{\R^2} \frac{ F(t_nu_n)}{(t_nu_n)^4} |u_n|^4dx.
\]
Using $(f_2)$ again which implies that $\lim\limits_{s\to+\infty}F(s)/s^4=+\infty$,
then the right hand hand goes to $+\infty$ as $n\to\infty$.
However, the left hand can be controlled by a universal constant independent of $n\in \mathbb{N}^+$
by \eqref{gauge1}. Thereby, the claim concludes.
On the other hand, we claim that $\{t_n\}$ is uniformly bounded from below by a positive constant.
Otherwise, we should suppose that $t_{u_n}\to0$ up to a subsequence. By
exploiting a very similar calculations in \eqref{unique1},
we can certify that
\[
\lim_{n\to\infty}\int_{\R^2}[|\nabla u_n|^2 + (A_1^2+A_2^2)|u_n|^2]dx=0
\]
which is impossible and the claim is true. So, passing to a subsequence if necessary, there is $t_0>0$
such that $t_{u_n}\to t_0$ as $n\to\infty$. Owing to the Lemma A.2 in the Appendix below,
we immediately conclude that $J(u_{t_0})=0$ by $J(t_{u_n}u_n(t_{u_n}\cdot))=0$.
This actually indicates that $t_{0}u_0(t_{0}\cdot)\in \mathcal{M}(a)$. Recalling the above fact that
$t_{u_0}u_0(t_{u_0}\cdot)\in \mathcal{M}(a)$, we must have $t_0=t_{u_0}$
determined by
the uniqueness of the map $u_0\mapsto t_{u_0}$ which means that $t_{u_n}\to t_{u_0}$ as $n\to\infty$. The proof is completed.
 \end{proof}

 \begin{lemma}\label{positive} Let $a\neq0$. Then, for all $u\in \mathcal{M}(a)$,
there is a constant $\varrho>0$ such that
$|\nabla u|_2^2\geq\varrho$. In particular, it holds that
the minimization $m(a)>0$.
\end{lemma}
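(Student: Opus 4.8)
The plan is to argue by contradiction for the gradient bound, and then deduce the positivity of $m(a)$ from assumption $(f_2)$. Fix $u\in\mathcal{M}(a)$. Since $J(u)=0$ and the Chern--Simons density $(A_1^2+A_2^2)|u|^2$ is nonnegative, the starting inequality is
\[
|\nabla u|_2^2\le\int_{\R^2}\big[|\nabla u|^2+(A_1^2+A_2^2)|u|^2\big]dx=\int_{\R^2}\big[f(u)u-2F(u)\big]dx .
\]
The whole point is to bound the right-hand side by a \emph{superquadratic} power of $|\nabla u|_2$ with a constant uniform over $\mathcal{M}(a)$. Using $(f_2)$ together with the growth bounds \eqref{growth3}--\eqref{growth4}, I would estimate the integrand by $C\big(\varepsilon|u|^\chi+C_\varepsilon|u|^q(e^{\alpha|u|^2}-1)\big)$ for some $\alpha>\alpha_0$ and a fixed $q>4$ as in \eqref{growth4}. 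The polynomial part is immediate from the Gagliardo--Nirenberg inequality \eqref{GN}, namely $\int_{\R^2}|u|^\chi dx\le C_\chi a^2|\nabla u|_2^{\chi-2}$ with $\chi-2\ge2$; when $\chi=4$ I would fix $\varepsilon$ small enough that this term is absorbed into $\tfrac14|\nabla u|_2^2$, while for $\chi>4$ it is already superquadratic.

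The main obstacle is the uniform control of the exponential term $\int_{\R^2}|u|^q(e^{\alpha|u|^2}-1)dx$: normalizing $u$ by $|\nabla u|_2$ to invoke \eqref{TM2} would blow up the $L^2$-norm $a/|\nabla u|_2$ precisely in the regime $|\nabla u|_2\to0$ that matters. I would circumvent this exactly as in the proof of Lemma \ref{unique}. Apply H\"older with exponents $2,2$, use the elementary bound $(e^{\alpha|u|^2}-1)^2\le e^{2\alpha|u|^2}-1$, and then rescale by setting $\bar u=\sqrt{2\alpha/\beta}\,u$ for a fixed $\beta\in(0,4\pi)$. For $|\nabla u|_2^2$ below a fixed threshold one has $|\nabla\bar u|_2^2=\tfrac{2\alpha}{\beta}|\nabla u|_2^2\le1$, whereas $|\bar u|_2^2=\tfrac{2\alpha}{\beta}a^2$ stays fixed, so \eqref{TM2} gives $\int_{\R^2}(e^{2\alpha|u|^2}-1)dx\le C$ with $C$ independent of $u$. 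A further use of \eqref{GN} yields $|u|_{2q}^q\le(C_{2q}a^2)^{1/2}|\nabla u|_2^{q-1}$, whence this term is bounded by $C''|\nabla u|_2^{q-1}$, again superquadratic since $q>4$.

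Collecting these estimates produces
\[
|\nabla u|_2^2\le\tfrac14|\nabla u|_2^2+C'|\nabla u|_2^{\chi-2}+C''|\nabla u|_2^{q-1}
\]
(the $\tfrac14$-term appearing only in the case $\chi=4$), valid whenever $|\nabla u|_2^2$ is below the threshold forced by the rescaling. If there were a sequence $\{u_n\}\subset\mathcal{M}(a)$ with $|\nabla u_n|_2\to0$, then for large $n$ the threshold condition holds, so dividing by $|\nabla u_n|_2^2$ and letting $n\to\infty$ drives the right-hand side to a constant strictly below $1$, a contradiction. Hence there is $\varrho>0$, uniform over $\mathcal{M}(a)$, with $|\nabla u|_2^2\ge\varrho$.

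Finally, for the positivity of $m(a)$ I would exploit $(f_2)$ in the form $\bar F(s)=f(s)s-2F(s)\ge(\theta-2)F(s)$, i.e. $F(s)\le\frac{1}{\theta-2}\bar F(s)$. Since $J(u)=0$ gives $\int_{\R^2}\bar F(u)dx=\int_{\R^2}[|\nabla u|^2+(A_1^2+A_2^2)|u|^2]dx$, it follows that
\[
E(u)\ge\Big(\tfrac12-\tfrac1{\theta-2}\Big)\int_{\R^2}\big[|\nabla u|^2+(A_1^2+A_2^2)|u|^2\big]dx\ge\Big(\tfrac12-\tfrac1{\theta-2}\Big)\varrho,
\]
which is strictly positive because $\theta>4$ forces $\tfrac1{\theta-2}<\tfrac12$. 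Taking the infimum over $u\in\mathcal{M}(a)$ yields $m(a)>0$.
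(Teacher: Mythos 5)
Your proposal is correct and follows essentially the same route as the paper: a contradiction argument for the gradient bound using the fixed (not $|\nabla u|_2$-dependent) rescaling to invoke the Trudinger--Moser bound \eqref{TM2}, H\"older plus Gagliardo--Nirenberg to get superquadratic powers of $|\nabla u|_2$, and then $E(u)=E(u)-\tfrac{1}{\theta-2}J(u)\geq\tfrac{\theta-4}{2(\theta-2)}|\nabla u|_2^2$ for the positivity of $m(a)$, which is exactly your $\bigl(\tfrac12-\tfrac1{\theta-2}\bigr)\varrho$ bound. The only cosmetic difference is that the paper takes $q>3$ where you take $q>4$; both suffice.
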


\begin{proof}
Arguing it indirectly, we suppose that there is a sequence $\{u_n\}\subset \mathcal{M}(a)$ such that
$|\nabla u_n|_2^2\to0$ as $n\to\infty$. Clearly, without loss of generality, we can assume that
$\sup_{n\in \mathbb{N}^+}|\nabla u_n|_2^2\leq K$ with some $K\in(0,\frac{2\pi}{\alpha})$,
where $\alpha>\alpha_0$ comes from \eqref{growth3}. Denoting $\bar{u}_n=K^{-\frac12}u_n$,
then $\sup_{n\in \mathbb{N}^+}|\nabla \bar{u}_n|_2^2\leq1$ and
$|\bar{u}_n|^{2}_2=K^{-1}a^2<+\infty$. Adopting \eqref{TM2} with $2K\alpha<4\pi$, we obtain
\[
\sup_{n\in \mathbb{N}^+}\int_{\R^2}(e^{2\alpha|u_n|^2}-1) dx=
\sup_{n\in \mathbb{N}^+}\int_{\R^2}(e^{2K\alpha|\bar{u}_n|^2}-1) dx\leq C<+\infty.
\]
Choosing $q>3$ in \eqref{growth3} and using the Gagliardo-Nirenberg inequality \eqref{GN}, one has
\begin{align}\label{positive1}
\nonumber \int_{\R^2}f(u_n)u_ndx &\leq \varepsilon\int_{\R^2}|u_n|^\chi dx+C_\varepsilon\int_{\R^2}|u_n|^q(e^{\alpha|u_n|^2}-1) dx \\
\nonumber   & \leq  \varepsilon a^2\bigg(\int_{\R^2}|\nabla u_n|^2 dx\bigg)^{\frac{\chi-2}{2}}+
  C_\varepsilon\bigg(\int_{\R^2}|u_n|^{2q}dx\bigg)^{\frac12}\bigg( \int_{\R^2}(e^{2\alpha|u_n|^2}-1) dx\bigg)^{\frac12}\\
  & \leq  \varepsilon a^2\bigg(\int_{\R^2}|\nabla u_n|^2 dx\bigg)^{\frac{\chi-2}{2}}+
  \bar{C}_\varepsilon a\bigg(\int_{\R^2}|\nabla u_n|^{2}dx\bigg)^{\frac{q-1}2}.
\end{align}
Since $\{u_n\}\subset \mathcal{M}(a)$ which gives that $J(u_n)=0$, one sees that
\[
\int_{\R^2}|\nabla u_n|^2 dx\leq  \varepsilon a^2\bigg(\int_{\R^2}|\nabla u_n|^2 dx\bigg)^{\frac{\chi-2}{2}}+
  \bar{C}_\varepsilon a\bigg(\int_{\R^2}|\nabla u_n|^{2}dx\bigg)^{\frac{q-1}2}
\]
which is absurd when $\chi>4$ and $q>3$. It still remains valid when $\chi=4$
by choosing $\varepsilon=\frac1{2a^2}>0$. So, we derive the first part of this lemma.
According to $(f_2)$, for all $u\in \mathcal{M}(a)$,
\[
E(u)=E(u)-\frac1{\theta-2}J(u)\geq\frac{\theta-4}{2(\theta-2)}\int_{\R^2}|\nabla u|^2 dx
\]
showing the desired result. The proof is completed.
\end{proof}

\begin{lemma}\label{bounded}
Let $\{u_n\}\subset \mathcal{M}(a)$ be a minimizing sequence of $m(a)$, then $\{u_n\}$ is uniformly
 bounded in $H^1(\R^2)$. Moreover, we further
have that
\begin{equation}\label{bounded1}
\left\{\int_{\R^2}F(u_n)dx\right\}~\text{and}~\left\{\int_{\R^2}f(u_n)u_ndx\right\}~\text{are uniformly bounded in}~n\in \mathbb{N}^+.
\end{equation}
\end{lemma}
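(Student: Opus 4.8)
The plan is to exploit the super-quadratic structure encoded in $(f_2)$, exactly as in the last display of the proof of Lemma \ref{positive}, to convert the minimizing condition $E(u_n)\to m(a)$ into a uniform gradient bound, and then to extract the two nonlinear integrals directly from the definitions of $E$ and $J$ together with the representation bound \eqref{gauge1}. No compactness or Trudinger--Moser input is needed at this stage.

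First I would record, for every $u\in\mathcal{M}(a)$, the algebraic identity obtained by combining $E$ and $J$. Writing $T(u)=\int_{\R^2}[|\nabla u|^2+(A_1^2+A_2^2)|u|^2]dx$ and using $J(u)=0$, a direct computation gives
\[
E(u)=E(u)-\frac{1}{\theta-2}J(u)=\frac{\theta-4}{2(\theta-2)}\,T(u)+\frac{1}{\theta-2}\int_{\R^2}[f(u)u-\theta F(u)]dx.
\]
By $(f_2)$ the last integral is nonnegative, and since $\theta>4$ the coefficient $\frac{\theta-4}{2(\theta-2)}$ is strictly positive; hence
\[
E(u)\geq\frac{\theta-4}{2(\theta-2)}\,T(u)\geq\frac{\theta-4}{2(\theta-2)}|\nabla u|_2^2.
\]
Because $\{u_n\}$ is a minimizing sequence, $E(u_n)\to m(a)<+\infty$, so $\{E(u_n)\}$ is bounded, and the inequality above yields $\sup_n|\nabla u_n|_2^2<+\infty$. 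Combined with the mass constraint $|u_n|_2^2=a^2$, this proves that $\{u_n\}$ is uniformly bounded in $H^1(\R^2)$.

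For the second assertion I would first bound the full term $T(u_n)$: by \eqref{gauge1} one has $\int_{\R^2}(A_1^2+A_2^2)|u_n|^2dx=|A_1u_n|_2^2+|A_2u_n|_2^2\leq C\|u_n\|_{H^1(\R^2)}^6$, which is bounded by the previous step, so $\{T(u_n)\}$ is bounded. From $E(u_n)=\frac12 T(u_n)-\int_{\R^2}F(u_n)dx$ it then follows that $\int_{\R^2}F(u_n)dx=\frac12 T(u_n)-E(u_n)$ is bounded, and from $J(u_n)=0$, that is $\int_{\R^2}f(u_n)u_ndx=T(u_n)+2\int_{\R^2}F(u_n)dx$, we obtain that $\int_{\R^2}f(u_n)u_ndx$ is bounded as well, which is precisely \eqref{bounded1}.

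The main (indeed only) delicate point is the gradient estimate of the first step, where the hypothesis $\theta>4$ in $(f_2)$ is essential: it is exactly what makes the prefactor $\frac{\theta-4}{2(\theta-2)}$ positive and thereby turns the bounded energy $E(u_n)$ into coercivity on $|\nabla u_n|_2$. Once this is secured, the remainder is bookkeeping, using \eqref{gauge1} to control the Chern--Simons contribution and the constraint $J(u_n)=0$ to close the estimates for the nonlinear integrals.
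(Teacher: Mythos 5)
Your proposal is correct and follows essentially the same route as the paper: the key step in both is the identity $E(u_n)=E(u_n)-\tfrac{1}{\theta-2}J(u_n)\geq\tfrac{\theta-4}{2(\theta-2)}|\nabla u_n|_2^2$, which turns the bounded energy into a gradient bound. The only (immaterial) difference is in the bookkeeping for \eqref{bounded1}: the paper bounds $\int_{\R^2}F(u_n)dx$ via $E(u_n)-\tfrac12 J(u_n)\geq\tfrac{\theta-4}{2}\int_{\R^2}F(u_n)dx$ using $(f_2)$ once more, whereas you isolate it directly from the definition of $E$ after controlling the Chern--Simons term by \eqref{gauge1}; both are valid one-line arguments.
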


\begin{proof}
Since $J(u_n)=0$ for all $\{u_n\}\subset \mathcal{M}(a)$, we conclude from $(f_2)$ that
\begin{align*}
  m(a)+o_n(1) &=E(u_n)=E(u_n)-\frac{1}{\theta-2}J(u_n) \\
 & = \frac{\theta-4}{2(\theta-2)}\int_{\R^2}[|\nabla u_n|^2+(A_1^2+A_2^2)|u_n|^2]dx+\frac{1}{\theta-2}\int_{\R^{2}}\left[f(u_n)u_n-\theta F(u_n)\right]dx\\
 &\geq \frac{\theta-4}{2(\theta-2)}\int_{\R^2}[|\nabla u_n|^2+(A_1^2+A_2^2)|u_n|^2]dx
\end{align*}
and
\[
  m(a)+o_n(1) =E(u_n)-\frac{1}{2}J(u_n)  =  \frac{1}{2}\int_{\R^{2}}\left[f(u_n)u_n-4 F(u_n)\right]dx
  \geq \frac{\theta-4}{2}\int_{\R^{2}} F(u_n) dx.
\]
Since $F(s)\geq0$ for all $s\in\R$, exploiting the definition of $J$,
we can finish the proof of this lemma.
\end{proof}

Next, we are going to show that any minimizer of the minimization problem $m(a)=\inf_{u\in \mathcal{M}(a)}E(u)$
is a solution of Eq. \eqref{mainequation1} with a suitable $\lambda\in\R$.
In other words, we prove that each critical point of $E|_{\mathcal{M}(a)}$ is indeed a critical point of
$E|_{S(a)}$. Taking this purpose into account, we need to introduce the following result.

\begin{lemma}\label{CHANG}(see \cite[Corollary 4.1.2]{Chang})
 Let $X$ be a real Banach space and $U\subset X$ be an open set. Suppose that $\tilde{h},g_1,\cdots,g_m:U\to\mathbb{R}^1$ are
 $C^1$ functions and $x_0\in \mathcal{M}$ satisfying $\tilde{h}(x_0)=\inf\limits_{x\in \mathcal{M}}\tilde{h}(x)$ with
$$
\mathcal{M}=\{x\in U: g_i(x)=0,~i=1,2,\cdots,m\}.
$$
If
$\{g_i^{\prime}(x_0)\}_{i=1}^m$ is linearly independent, then there exist $\lambda_1,\cdots,\lambda_m\in \R$ such that
$$
\tilde{h}^{\prime}(x_0)+\sum_{i=1}^m\lambda_ig_i^{\prime}(x_0)=0.
$$
\end{lemma}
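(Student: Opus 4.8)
The plan is to recognize Lemma~\ref{CHANG} as the Lagrange multiplier rule for $C^1$ constrained minimization in a Banach space, and to prove it by showing that $\tilde{h}'(x_0)$ lies in the linear span of $g_1'(x_0),\dots,g_m'(x_0)$ inside the dual space $X^*$. The guiding principle is the elementary functional-analytic fact that, for continuous linear functionals $\ell,\ell_1,\dots,\ell_m$ on $X$, one has $\ell\in\mathrm{span}\{\ell_1,\dots,\ell_m\}$ as soon as $\bigcap_{i=1}^m\ker\ell_i\subseteq\ker\ell$. Applying this with $\ell=\tilde{h}'(x_0)$ and $\ell_i=g_i'(x_0)$, it suffices to prove the tangency statement: every $v\in X$ with $g_i'(x_0)[v]=0$ for all $i$ must satisfy $\tilde{h}'(x_0)[v]=0$. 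Once this inclusion is established, the span membership yields scalars $\mu_1,\dots,\mu_m$ with $\tilde{h}'(x_0)=\sum_{i=1}^m\mu_i g_i'(x_0)$, and setting $\lambda_i=-\mu_i$ gives the asserted identity $\tilde{h}'(x_0)+\sum_{i=1}^m\lambda_i g_i'(x_0)=0$.

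The core of the argument is to convert the infinitesimal condition $g_i'(x_0)[v]=0$ into an honest admissible variation, i.e. a $C^1$ curve lying on $\mathcal{M}$ and tangent to $v$ at $x_0$. First I would use the linear independence of $\{g_i'(x_0)\}$ to note that the bounded linear map $G'(x_0)\colon X\to\R^m$, where $G=(g_1,\dots,g_m)$, is surjective; indeed, a nonzero vector annihilating its range would produce a nontrivial linear relation among the $g_i'(x_0)$. Surjectivity lets me select $w_1,\dots,w_m\in X$ dual to the functionals, namely $g_i'(x_0)[w_j]=\delta_{ij}$. Then I would define the finite-dimensional map $\Phi(t,s)=G\big(x_0+tv+\sum_{j=1}^m s_j w_j\big)$ for $(t,s)\in\R\times\R^m$ near the origin, observe $\Phi(0,0)=0$, and compute $\partial_s\Phi(0,0)=I_m$, which is invertible. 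The implicit function theorem, needed only for a map into $\R^m$ so that the infinite dimension of $X$ causes no trouble, furnishes a $C^1$ function $t\mapsto s(t)$ with $s(0)=0$ and $\Phi(t,s(t))\equiv0$, so that $\gamma(t)=x_0+tv+\sum_j s_j(t)w_j\in\mathcal{M}$ for $|t|$ small. Differentiating $\Phi(t,s(t))=0$ at $t=0$ and using $G'(x_0)[v]=0$ forces $s'(0)=0$, whence $\gamma'(0)=v$.

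Finally I would invoke the minimality: since $x_0$ minimizes $\tilde{h}$ over $\mathcal{M}$ and $\gamma$ is a $C^1$ curve in $\mathcal{M}$ with $\gamma(0)=x_0$, the scalar function $t\mapsto\tilde{h}(\gamma(t))$ has an interior minimum at $t=0$, so its derivative vanishes and $\tilde{h}'(x_0)[v]=\tilde{h}'(x_0)[\gamma'(0)]=0$. As $v$ was an arbitrary element of $\bigcap_i\ker g_i'(x_0)$, this is exactly the inclusion required above, and the proof concludes via the span-membership fact. The step I expect to be the main obstacle is the construction of the admissible curve: it is where the $C^1$ regularity and, crucially, the linear independence assumption are genuinely used, since without surjectivity of $G'(x_0)$ one could neither build the dual system $\{w_j\}$ nor apply the implicit function theorem to solve $\Phi(t,s(t))=0$. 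Everything else is soft linear algebra and the one-variable first-order condition.
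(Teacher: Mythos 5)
Your proof is correct and complete. Note that the paper itself offers no proof of this lemma: it is quoted verbatim from Chang's book (Corollary 4.1.2 of \cite{Chang}), so there is no in-paper argument to compare against. What you have written is the standard Ljusternik-type proof of the Lagrange multiplier rule --- reduce to the kernel inclusion $\bigcap_i\ker g_i'(x_0)\subseteq\ker\tilde h'(x_0)$, realize each tangent vector by an admissible $C^1$ curve built with the finite-dimensional implicit function theorem (where the linear independence of the $g_i'(x_0)$ is exactly what gives surjectivity of $G'(x_0)$ and the dual system $w_j$), and finish with the first-order condition along the curve plus the span-membership fact for functionals. All the steps, including the computation $s'(0)=0$ that makes $\gamma'(0)=v$, are sound; this is essentially the argument in the cited reference.
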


\begin{lemma}\label{mnifold}
 Let $a\neq0$, then $\mathcal{M}(a)$ is a $\mathcal{C}^1$-manifold of codimension 2 in $H^1(\R^2)$.
 Moreover, it is a $\mathcal{C}^1$-manifold of codimension 1 in $S(a)$.
\end{lemma}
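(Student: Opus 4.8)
The plan is to exhibit $\mathcal{M}(a)$ as a regular level set of a $\mathcal{C}^1$ constraint map. Extend $J$ to all of $H^1(\R^2)$ by the same formula (legitimate, since each term is finite by Lemma \ref{TM} and \eqref{gauge1}), and set $G_1(u)=|u|_2^2-a^2$ and $G_2(u)=J(u)$, so that $\mathcal{M}(a)=G_1^{-1}(0)\cap G_2^{-1}(0)$. Both functionals are of class $\mathcal{C}^1$ on $H^1(\R^2)$: $G_1$ is a quadratic form, while the regularity of $G_2$ is inherited from that of $E$ discussed just before Lemma \ref{Pohozaev}, using $(f_1)$, the Trudinger--Moser inequality and the Chern--Simons estimates. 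By the implicit function theorem in Hilbert space it then suffices to show that, for every $u\in\mathcal{M}(a)$, the differentials $G_1'(u)$ and $G_2'(u)$ are linearly independent in the dual $H^{-1}(\R^2)$; this immediately gives the codimension-$2$ manifold structure in $H^1(\R^2)$.

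Since $G_1'(u)[u]=2|u|_2^2=2a^2\neq0$, we always have $G_1'(u)\neq0$, so independence can fail only if $G_2'(u)=\mu\,G_1'(u)$ for some $\mu\in\R$. To preclude this I would use the $L^2$-preserving fibering $u_t=tu(t\cdot)$ already employed in Lemma \ref{unique}. As $|u_t|_2=|u|_2$ for every $t>0$, the formal tangent vector $w=\tfrac{d}{dt}u_t\big|_{t=1}=u+x\cdot\nabla u$ satisfies, after one integration by parts, $\int_{\R^2}uw\,dx=0$, i.e. $w\in\mathbb{T}_u$ and $G_1'(u)[w]=0$. Consequently, were $G_2'(u)=\mu\,G_1'(u)$, we would obtain $G_2'(u)[w]=0$, and the whole matter reduces to proving $G_2'(u)[w]\neq0$.

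Here assumption $(f_3)$ enters decisively. Recall from Lemma \ref{unique} that $\varsigma(t)=E(u_t)$ obeys $\varsigma'(t)=\tfrac1t J(u_t)$, whence the explicit scalar identity $J(u_t)=t^2\int_{\R^2}[|\nabla u|^2+(A_1^2+A_2^2)|u|^2]dx-t^{-2}\int_{\R^2}\bar{F}(tu)dx$, with $\bar{F}(s)=f(s)s-2F(s)$. Differentiating in $t$ and using $J(u)=0$, which gives $\int_{\R^2}[|\nabla u|^2+(A_1^2+A_2^2)|u|^2]dx=\int_{\R^2}\bar{F}(u)dx$, one finds
\[
G_2'(u)[w]=\frac{d}{dt}J(u_t)\Big|_{t=1}=\int_{\R^2}\big[4\bar{F}(u)-\bar{F}'(u)\,u\big]dx.
\]
Now $(f_3)$ is equivalent to $\bar{F}'(s)s-4\bar{F}(s)>0$ for all $s>0$; since $f\equiv0$ on $(-\infty,0]$ and $u\not\equiv0$ (indeed $\{u>0\}$ has positive measure, for otherwise $J(u)=0$ would force $|\nabla u|_2=0$, impossible on $S(a)$), the integrand is strictly negative on a set of positive measure, so $G_2'(u)[w]<0$. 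This contradicts $G_2'(u)[w]=0$ and proves the linear independence, hence the codimension-$2$ assertion. The codimension-$1$ statement in $S(a)$ then follows at once: $S(a)=G_1^{-1}(0)$ is a $\mathcal{C}^1$ submanifold with tangent space $\mathbb{T}_u$, and $\mathcal{M}(a)$ is the zero set of $J|_{S(a)}$, whose differential $G_2'(u)\big|_{\mathbb{T}_u}$ is nonzero precisely because $w\in\mathbb{T}_u$ realizes $G_2'(u)[w]<0$.

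The step I expect to require the most care is the differentiation itself: for a generic $u\in H^1(\R^2)$ the formal direction $w=u+x\cdot\nabla u$ need not lie in $H^1(\R^2)$, so the identity $G_2'(u)[w]=\tfrac{d}{dt}J(u_t)|_{t=1}$ cannot simply be read off from the chain rule. I would handle this either by observing that all quantities in the scalar map $t\mapsto J(u_t)$ are finite (by Lemma \ref{TM} and \eqref{gauge1}), so that its $t$-derivative is legitimate, and then reaching the transversality conclusion through an approximation of $w$ by smooth compactly supported directions; or, alternatively, by noting that the dependence relation $G_2'(u)=\mu G_1'(u)$ would force $u$ to be a weak solution of an elliptic equation whose regularity theory places $x\cdot\nabla u$ in $L^2(\R^2)$, rendering $t\mapsto u_t$ differentiable at $t=1$. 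In every case the strict sign furnished by $(f_3)$ is what ultimately closes the argument.
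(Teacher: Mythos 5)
Your argument is correct and essentially coincides with the paper's: testing the putative dependence $J'(u)=\mu P'(u)$ against the dilation direction and computing $\tfrac{d}{dt}J(u_t)\big|_{t=1}=\int_{\R^2}[4\bar{F}(u)-\bar{F}'(u)u]\,dx=-\int_{\R^2}[f'(u)u^2-5f(u)u+8F(u)]\,dx<0$ is exactly the paper's combination of the Poho\u{z}aev identity (Lemma \ref{Pohozaev}) applied to the Euler--Lagrange equation forced by the dependence, the relation $J(u)=0$, and the strict inequality of Lemma A.3 coming from $(f_3)$. The one inaccuracy is your second fallback for the delicate step — elliptic regularity is only local and does not place $x\cdot\nabla u$ in $L^2(\R^2)$ — but your first route (derive the PDE from the dependence relation and invoke the truncation-based Poho\u{z}aev identity of Lemma A.1, which needs no global integrability of $x\cdot\nabla u$) is precisely how the paper closes this gap, so the proof goes through as you describe.
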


\begin{proof}
Given a $u\in \mathcal{M}(a)$, one sees that $J(u)=0$ and $P(u)\triangleq\int_{\R^2}u^2dx-a^2=0$.
Since $f\in \mathcal{C}^1$, we see that $J$ and $P$ are of $C^1$ class.
There exist the following two possibilities: either (i) $J$ and $P$ are linearly dependent; or (ii)
$J$ and $P$ are linearly independent.

If (i) holds true, then for all fixed $u\in \mathcal{M}(a)$, it means that there is a constant $\lambda^*\in\R$ satisfying
\[
2\int_{\R^2}[ \nabla u\nabla\psi  + (A_0+A_1^2+A_2^2)u\psi]dx+2\lambda^*\int_{\R^2}u\psi dx
=\int_{\R^2}[f^\prime(u)u-f(u)]\psi dx
\]
for all $\psi\in H^1(\R^2)$,
that is,
\[ \left\{
  \begin{array}{ll}
\displaystyle    -\Delta u +\lambda^* u+A_0 u+\sum\limits_{j=1}^2A_j^2 u=\frac12 [f^\prime(u)u-f(u)], \\
 \displaystyle     \partial_1A_2-\partial_2A_1=-\frac{1}{2}|u|^2,~\partial_1A_1+\partial_2A_2=0,\\
 \displaystyle    \partial_1A_0=A_2|u|^2,~ \partial_2A_0=-A_1|u|^2,
  \end{array}
\right.
\]
Using Lemma \ref{Pohozaev}, we deduce that $u$ must satisfy the equality
\[
 \int_{\R^2}[|\nabla u|^2 + (A_1^2+A_2^2)|u|^2]dx=\frac12\int_{\R^2}[f^\prime(u)u^2-3f(u)u+4F(u)]dx.
\]
Recalling that $J(u)=0$ for $u\in \mathcal{M}(a)$, using the above equality, there holds
\[
\int_{\R^2}[f^\prime(u)u^2-5f(u)u+8F(u)]dx=0
\]
which is impossible by Lemma A.3 in the Appendix below. So, we have that (i) cannot occur.

When (ii) holds true, then by Lemma \ref{CHANG}, there are $\lambda_1,\lambda_2\in\R$ such that
\[
E^\prime(u)+\lambda_1J^\prime(u)+\lambda_2u=0~\text{in}~H^{-1}(\R^2).
\]
To derive the proof of this lemma, we shall show that $\lambda_1=0$. First of all, we know that
\[\begin{gathered}
(1+2\lambda_1)\int_{\R^2}[ \nabla u\nabla\psi  + (A_0+A_1^2+A_2^2)u\psi]dx+ \lambda_2\int_{\R^2}u\psi dx\hfill\\
\ \ \ \  \ \ \ \ =\int_{\R^2}[\lambda_1f^\prime(u)u-(\lambda_1-1)f(u)]\psi dx\hfill\\
\end{gathered}\]
for all $\psi\in H^1(\R^2)$, that is,
\[ \left\{
  \begin{array}{ll}
\displaystyle    -(1+2\lambda_1)\Delta u +\lambda_2 u+(1+2\lambda_1)A_0 u+(1+2\lambda_1)
\sum\limits_{j=1}^2A_j^2 u=\lambda_1f^\prime(u)u-(\lambda_1-1)f(u), \\
 \displaystyle     \partial_1A_2-\partial_2A_1=-\frac{1}{2}|u|^2,~\partial_1A_1+\partial_2A_2=0,\\
 \displaystyle    \partial_1A_0=A_2|u|^2,~ \partial_2A_0=-A_1|u|^2,
  \end{array}
\right.
\]
It follows from Lemma \ref{Pohozaev} again that
\[
 (1+2\lambda_1)\int_{\R^2}[|\nabla u|^2 + (A_1^2+A_2^2)|u|^2]dx
 = \int_{\R^2}[\lambda_1f^\prime(u)u^2-(3\lambda_1-1)f(u)u+2(2\lambda_1-1)F(u)]dx.
\]
Since $J(u)=0$ for $u\in \mathcal{M}(a)$ combined with the above equality, we have
\[
\lambda_1\int_{\R^2}[f^\prime(u)u^2-5f(u)u+8F(u)]dx=0
\]
jointly with Lemma A.3 below again yields that $\lambda_1=0$. The proof is completed.
\end{proof}

To look for minimizer of $m(a)$, we shall construct a Palais-Smale sequence for $E$ restricted on $S(a)$
at the level $m(a)$ by using the minimax principle based on the
homotopy stable family of compact subsets of $S(a)$.
Due to \cite{Ghoussoub}, we have its definition as follows.

\begin{definition}\label{homotop}
Let $B$ be a closed subset of a set $Y\subset H^1(\R^2)$. We say that a class $\mathcal{G}$
of compact subsets of $Y$ is a \emph{homotopy stable family} with the closed boundary $B$ provided that
\begin{itemize}
  \item every set in $\mathcal{G}$ is contained in $B$;
  \item for any $A\in \mathcal{G}$ and each function $\eta\in \mathcal{C}([0,1]\times Y,Y)$ satisfying $\eta(t,x)=x$ for all
$(t,x)\in (\{0\}\times Y)\cup([0,1]\times B)$, then $\eta(\{1\}\times A)\in \mathcal{G}$.
\end{itemize}
\end{definition}

We borrow some ideas developed in \cite{BartschSoave2,SzulkinWeth} to consider the functional
$\Psi:S(a)\to\R$ defined by
\[
\Psi= E\circ \zeta,
\]
where the map $\zeta:S(a)\to \mathcal{M}(a)$ is defined as $u\mapsto t_uu(t_u\cdot)$
and $t_u>0$ is determined by Lemma \ref{unique}.
Since $u\mapsto t_u$ is a continuous map, motivated by \cite[Proposition 2.9]{SzulkinWeth},
 it allows us to show that $\Psi$ is of $\mathcal{C}^1$ class over $S(a)$
and its derivative is computed as
\begin{equation}\label{derivative}
\Psi^\prime(u)[v]  =E^\prime(\zeta(u))[v_{t_u}]
\end{equation}
for any $u\in S(a)$ and $v\in \mathbb{T}_u$, see Lemma A.4 in the Appendix in detail.

\begin{lemma}\label{PSsequence}
 Let $\mathcal{G}$ be a homotopy stable family of compact subsets of $S(a)$ with closed boundary $B$ and define
\[
m_\mathcal{G}\triangleq\inf_{A\in \mathcal{G}}\max_{u\in A}\Psi(u).
\]
Suppose that $B$ is contained in a connected component of $\mathcal{M}(a)$ and
\[
\max\{\sup\Psi(B),0\}<m_\mathcal{G}<+\infty,
\]
then there is a Palais-Smale sequence $\{u_n\}\subset\mathcal{M}(a)$ for $E$ restricted on $S(a)$
at the level $m_\mathcal{G}$.
\end{lemma}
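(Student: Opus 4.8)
The plan is to produce the Palais--Smale sequence by applying the minimax principle of Ghoussoub \cite{Ghoussoub} to the auxiliary functional $\Psi=E\circ\zeta$ on $S(a)$ and then transporting the resulting critical-point information back onto $\mathcal{M}(a)$ through the fiber map $\zeta$. First I would record that $\Psi\in\mathcal{C}^1(S(a))$ with the derivative \eqref{derivative} available, and check that the data of the lemma match the hypotheses of Ghoussoub's theorem: $\mathcal{G}$ is a homotopy stable family of compact subsets of the $\mathcal{C}^1$-Finsler manifold $S(a)$ with closed boundary $B$ (Definition \ref{homotop}), while the separation $\max\{\sup\Psi(B),0\}<m_{\mathcal{G}}<+\infty$ guarantees that $B$ does not interfere with the minimax level. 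The assumption that $B$ lies in a single connected component of $\mathcal{M}(a)$ is used to ensure admissibility of the family and the fact that $\Psi=E$ on $B$ (since $\zeta(u)=u$ for $u\in\mathcal{M}(a)$). Ghoussoub's principle then furnishes $\{w_n\}\subset S(a)$ with $\Psi(w_n)\to m_{\mathcal{G}}$ and $\|\Psi|_{S(a)}'(w_n)\|_{H^{-1}(\R^2)}\to0$; in its refined form $\{w_n\}$ may moreover be taken $H^1$-close to a minimizing sequence of compact members $A_n\in\mathcal{G}$.

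Next I would project onto the natural constraint by setting $u_n\triangleq\zeta(w_n)=t_{w_n}w_n(t_{w_n}\cdot)\in\mathcal{M}(a)$, so that $E(u_n)=E(\zeta(w_n))=\Psi(w_n)\to m_{\mathcal{G}}$. Since $\{E(u_n)\}$ is bounded and $u_n\in\mathcal{M}(a)$, the computation of Lemma \ref{bounded} (via $(f_2)$) shows that $\{u_n\}$ is bounded in $H^1(\R^2)$, while Lemma \ref{positive} gives $\varrho\le|\nabla u_n|_2^2\le C$. To transfer the Palais--Smale condition, I would fix $v\in\mathbb{T}_{u_n}$ with $\|v\|_{H^1(\R^2)}=1$ and use the dilation $w\mapsto w_t=tw(t\cdot)$, which is an $L^2$-isometry (hence maps $S(a)$ into itself and $\mathbb{T}_{w_n}$ onto $\mathbb{T}_{u_n}$) and satisfies $|\nabla w_t|_2=t|\nabla w|_2$. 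Setting $\tilde v=v_{1/t_{w_n}}\in\mathbb{T}_{w_n}$, so that $(\tilde v)_{t_{w_n}}=v$, one obtains from \eqref{derivative}
\[
E'(u_n)[v]=\Psi'(w_n)[\tilde v],\qquad \|\tilde v\|_{H^1(\R^2)}^2=t_{w_n}^{-2}|\nabla v|_2^2+|v|_2^2\le\max\{1,t_{w_n}^{-2}\},
\]
whence $\|E|_{S(a)}'(u_n)\|_{H^{-1}(\R^2)}\le\max\{1,t_{w_n}^{-1}\}\,\|\Psi|_{S(a)}'(w_n)\|_{H^{-1}(\R^2)}$.

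The main obstacle is precisely the control of the dilation $t_{w_n}$ from below. Because $\Psi$ is invariant under the scaling $w\mapsto w_t$ (indeed $\Psi(w_t)=\max_{s>0}E(w_{ts})=\Psi(w)$ by Lemma \ref{unique}), it is not coercive on $S(a)$, so a generic Palais--Smale sequence for $\Psi$ need not be bounded and $t_{w_n}=|\nabla u_n|_2/|\nabla w_n|_2$ might degenerate to $0$. I would rule this out by exploiting the localized form of Ghoussoub's statement, choosing $\{w_n\}$ so that it stays $H^1$-close to compact minimizing sets $A_n$ whose $H^1$-diameters remain bounded for the concrete families producing $m_{\mathcal{G}}$; this keeps $|\nabla w_n|_2$ bounded above, and together with $|\nabla u_n|_2^2\ge\varrho$ forces $t_{w_n}\ge c>0$. (Alternatively one projects $w_n$ to $u_n\in\mathcal{M}(a)$ from the outset and re-runs the deformation within $\mathcal{M}(a)$, using the continuity of $\zeta$ from Lemma \ref{unique}.)

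Once $t_{w_n}$ is bounded away from zero, the displayed estimate yields $\|E|_{S(a)}'(u_n)\|_{H^{-1}(\R^2)}\to0$, while $E(u_n)\to m_{\mathcal{G}}$ and $u_n\in\mathcal{M}(a)$ by construction. Hence $\{u_n\}\subset\mathcal{M}(a)$ is the desired Palais--Smale sequence for $E$ restricted to $S(a)$ at the level $m_{\mathcal{G}}$, completing the proof. I expect all steps except the dilation control to be routine; the scaling bound is where the real work lies, and it is the point at which the coercivity furnished by $(f_2)$ through Lemmas \ref{positive} and \ref{bounded} becomes indispensable.
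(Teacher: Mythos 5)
Your overall architecture matches the paper's: apply Ghoussoub's minimax principle to $\Psi=E\circ\zeta$ on $S(a)$, push the resulting sequence onto $\mathcal{M}(a)$ via $\zeta$, and transfer the Palais--Smale property through the dilation isomorphism $\mathbb{T}_{w_n}\to\mathbb{T}_{u_n}$, $v\mapsto v_{t_{w_n}}$. You also correctly isolate the crux: one must bound $t_{w_n}=|\nabla u_n|_2/|\nabla w_n|_2$ away from $0$, i.e.\ bound $|\nabla w_n|_2$ from above (and, for the norm of the inverse map $v\mapsto v_{1/t_{w_n}}$ to stay controlled, also from below, so that $t_{w_n}$ is bounded above as well).

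However, the mechanism you propose for this bound does not work. There is no reason a minimizing family $\{A_n\}\subset\mathcal{G}$ should have uniformly bounded $H^1$-diameters, and even if it did, bounded diameter does not bound $|\nabla w_n|_2$: you would need the sets themselves to be uniformly bounded in $H^1$, which fails for arbitrary compact subsets of $S(a)$ precisely because of the scaling invariance of $\Psi$ that you yourself point out --- any minimizing set can be dilated at will without changing its $\Psi$-values. The paper's resolution, which your closing parenthetical gestures at but does not develop, is to use the homotopy stability of $\mathcal{G}$ \emph{before} invoking the minimax principle: the map $\eta(t,u)=(1-t+tt_u)u((1-t+tt_u)\cdot)$ is continuous by Lemma \ref{unique}, fixes $(\{0\}\times S(a))\cup([0,1]\times B)$ because $t_u\equiv 1$ on $B\subset\mathcal{M}(a)$, and hence $D_n:=\eta(\{1\}\times A_n)=\zeta(A_n)$ belongs to $\mathcal{G}$, lies in $\mathcal{M}(a)$, and satisfies $\max_{D_n}\Psi=\max_{A_n}\Psi=m_{\mathcal{G}}+o_n(1)$. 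Elements of $D_n$ then have $E\le m_{\mathcal{G}}+o_n(1)$, so they are uniformly bounded in $H^1$ by the coercivity computation in Lemma \ref{bounded} and satisfy $|\nabla\cdot|_2^2\ge\varrho$ by Lemma \ref{positive}. Applying \cite[Theorem 3.2]{Ghoussoub} to the minimizing family $\{D_n\}$ produces $\bar u_n$ with $\mathrm{dist}_{H^1(\R^2)}(\bar u_n,D_n)=o_n(1)$, which yields the two-sided bound $t_{\bar u_n}^2=|\nabla u_n|_2^2/|\nabla \bar u_n|_2^2\in[C_1,C_2]$; with that in hand the remainder of your argument goes through. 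Without this preliminary deformation of the $A_n$ into $\mathcal{M}(a)$, the scaling degeneracy you flagged is not actually excluded.
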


\begin{proof}
The idea can date bake to \cite{BartschSoave2} and we will exhibit the detailed proof for the convinence
of the reader. According to the definition of $m_\mathcal{G}$,
there is a minimizing sequence $\{A_n\}\subset \mathcal{G}$ such that
\[
\max_{u\in A_n}\Psi(u)=m_\mathcal{G}+o_n(1).
\]
Define a map $\eta:[0,1]\times S(a)\to S(a)$ by $\eta(t,u)\triangleq (1-t+tt_u)u((1-t+tt_u)\cdot)$,
where $t_u>0$ comes from Lemma \ref{unique} and so $\eta\in \mathcal{C}([0,1]\times S(a))$.
Moreover, adopting Lemma \ref{unique} again, one can see that
$\eta(t,u)=u$ for any $(t,u)\in(\{0\}\times S(a))\cup([0,1]\times B)$ since $B\subset \mathcal{M}(a)$ which
yields that $t_u\equiv1$.
In light of
 $\mathcal{G}$ is a homotopy stable family of compact subsets of $S(a)$ with closed boundary $B$,
then $D_n\triangleq \eta(\{1\}\times A_n)=\{u_{t_u}:u\in A_n\}\in \mathcal{G}$ by Definition \ref{homotop}.
 Using Definition \ref{homotop} again, one knows that $D_n\subset B\subset \mathcal{M}(a)$,
and for each $v\in D_n$, there is $u\in A_n$ such that $v=u_{t_u}$.
In view of Lemma \ref{unique}, we have that
$\Psi(v)=\Psi(u_{t_u})=E(\zeta(u))=\Psi(u)$ which shows that
\[
\max_{v\in D_n}\Psi(v)=\max_{u\in A_n}\Psi(u).
\]
Therefore, there is another minimizing sequence $\{D_n\}\subset \mathcal{M}(a)$ such that
 \[
\max_{v\in D_n}\Psi(v)=\max_{u\in A_n}\Psi(u)=m_\mathcal{G}+o_n(1).
\]
Thanks to the equivalent minimax principle \cite[Theorem 3.2]{Ghoussoub},
it permits us to derive a Palais-Smale sequence $\{\bar{u}_n\}\subset S(a)$ for $\Psi$ restricted on $S(a)$
at the level $m_\mathcal{G}$ and $\text{dist}_{H^1(\R^2)}(\bar{u}_n,D_n)=o_n(1)$.

Taking advantage of $\bar{u}_n$, we shall prove that $u_n\triangleq\zeta(\bar{u}_n)\in\mathcal{M}(a)$
is a Palais-Smale sequence for $\Psi$ restricted on $S(a)$
at the level $m_\mathcal{G}$. Since $E(u_n)=E(\zeta(\bar{u}_n))=\Psi(\bar{u}_n)=m_\mathcal{G}+o_n(1)$
and $=m_\mathcal{G}<+\infty$, due to Lemma \ref{bounded}, one obtains that $\{u_n\}$ is uniformly bounded in
$n\in \mathbb{N}^+$. In particular, $\{|\nabla u_n|_2^2\}$ is uniformly bounded from above by a universal constant.
 Exploiting Lemma \ref{positive}, one concludes that $\{|\nabla u_n|_2^2\}$ is uniformly bounded from below by a universal constant.
Then, we
denote $\bar{t}_n\triangleq t_{\bar{u}_n}>0$ such that
$t_{\bar{u}_n}\bar{u}_n (t_{\bar{u}_n} \cdot)\in\mathcal{M}(a)$ and so
\[
\bar{t}_n^2=\frac{|\nabla u_n|_2^2}{|\nabla \bar{u}_n|_2^2}\in[C_1,C_2],
\]
for some positive constants $C_1,C_2$ independent of $n\in \mathbb{N}^+$,
where we have depended on
  Lemma \ref{positive} and $\text{dist}_{H^1(\R^2)}(\bar{u}_n,D_n)=o_n(1)$
with $\{D_n\}\subset \mathcal{M}(a)$.

In view of the definition of $\mathbb{T}_u$, since
\[
\int_{\R^2}uvdx=\int_{\R^2}u_{t_u}v_{t_u}dx
\]
which implies that $v_{t_u}\in \mathbb{T}_{u_{t_u}}$ if, and only if, $v\in \mathbb{T}_u$,
then we know that the map $\mathbb{T}_u\to \mathbb{T}_{u_{t_u}}$, defined by $v\mapsto v_{t_u}$,
is an isomorphism by Lemma \ref{unique} with the inverse $v\mapsto v_{t_u^{-1}}$.
So,
\begin{align*}
  \|E|_{S(a)}^\prime(u_n)\|_{H^{-1}(\R^2)}  &=\sup_{v\in \mathbb{T}_{u_n},\|v\|_{H^1(\R^2)}=1}E^\prime(u_n)[v]
=\sup_{v\in \mathbb{T}_{u_n},\|v\|_{H^1(\R^2)}=1}E^\prime(\zeta(\bar{u}_n))[(v_{\bar{t}_n^{-1}})_{\bar{t}_n}]  \\
    &  =\sup_{v\in \mathbb{T}_{u_n},\|v\|_{H^1(\R^2)}=1}\Psi^\prime ( \bar{u}_n )[ v_{\bar{t}_n^{-1}}]
=\sup_{v_{\bar{t}_n^{-1}}\in \mathbb{T}_{\bar{u}_n},\|v_{\bar{t}_n^{-1}}\|_{H^1(\R^2)}=1}\Psi^\prime ( \bar{u}_n )[ v_{\bar{t}_n^{-1}}],
\end{align*}
where we have made full use of \eqref{derivative} in the third equality as well as
$v_{\bar{t}_n^{-1}}\in \mathbb{T}_{\bar{u}_n}$ if, and only if, $v\in \mathbb{T}_{u_n}$ in the last equality.
In addition, by $\bar{t}_n^2\in[C_1,C_2]$
and $\|v_{\bar{t}_n^{-1}}\|_{H^1(\R^2)}^2=t_n^{-2}|\nabla v|_2^2+|v|_2^2< C<+\infty$,
we derive that
\[
\|E|_{S(a)}^\prime(u_n)\|_{H^{-1}(\R^2)}\leq \bar{C}
\sup_{w\in \mathbb{T}_{\bar{u}_n},\|w\|_{H^1(\R^2)}=1}\Psi^\prime ( \bar{u}_n )[w].
\]
Recalling $\{\bar{u}_n\}\subset S(a)$ is a Palais-Smale sequence for $\Psi$ restricted on $S(a)$
at the level $m_\mathcal{G}$, thereby we see that $\{u_n\}\subset \mathcal{M}(a)$ is a Palais-Smale sequence for $E$ restricted on $S(a)$
at the level $m_\mathcal{G}$.
\end{proof}

As a corollary of Lemma \ref{PSsequence}, we are able to derive the result below.

\begin{lemma}\label{2PSsequence}
Let $a\neq0$. Then, there is a Palais-Smale sequence $\{u_n\}\subset \mathcal{M}(a)$ for $E$ restricted on $S(a)$
at the level $m(a)$.
\end{lemma}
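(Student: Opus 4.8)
The plan is to apply Lemma \ref{PSsequence} to the simplest possible homotopy stable family, namely the family of all singletons in $S(a)$. Concretely, I would set $B=\emptyset$ and take
\[
\mathcal{G}=\big\{\{u\}:u\in S(a)\big\},
\]
the collection of all one-point subsets of $S(a)$. First I would check that this is indeed a homotopy stable family with closed boundary $B=\emptyset$ in the sense of Definition \ref{homotop}: every element of $\mathcal{G}$ is a compact subset of $S(a)$, the boundary requirement involving $B$ is vacuous, and for any $A=\{u\}\in\mathcal{G}$ and any admissible $\eta\in\mathcal{C}([0,1]\times S(a),S(a))$ the image $\eta(\{1\}\times A)=\{\eta(1,u)\}$ is again a singleton, hence lies in $\mathcal{G}$.

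Next I would identify the minimax level $m_\mathcal{G}$ with $m(a)$. By definition,
\[
m_\mathcal{G}=\inf_{A\in\mathcal{G}}\max_{u\in A}\Psi(u)=\inf_{u\in S(a)}\Psi(u)=\inf_{u\in S(a)}E(\zeta(u)).
\]
The key observation is that the map $\zeta:S(a)\to\mathcal{M}(a)$, $u\mapsto t_uu(t_u\cdot)$, is surjective: for any $v\in\mathcal{M}(a)\subset S(a)$ the uniqueness in Lemma \ref{unique} forces $t_v=1$, so $\zeta(v)=v$ and $\zeta$ restricts to the identity on $\mathcal{M}(a)$. Consequently $\{\Psi(u):u\in S(a)\}=\{E(v):v\in\mathcal{M}(a)\}$, and therefore $m_\mathcal{G}=\inf_{v\in\mathcal{M}(a)}E(v)=m(a)$.

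It then remains to verify the two quantitative hypotheses of Lemma \ref{PSsequence}. Since $B=\emptyset$, one has $\sup\Psi(B)=-\infty$, so $\max\{\sup\Psi(B),0\}=0$ and the required chain of inequalities reduces to $0<m(a)<+\infty$. The strict positivity $m(a)>0$ is exactly the conclusion of Lemma \ref{positive}, while finiteness follows because $\mathcal{M}(a)$ is nonempty (for any fixed $u\in S(a)$, Lemma \ref{unique} yields $\zeta(u)\in\mathcal{M}(a)$) and $E$ takes a finite value there, so $m(a)\leq E(\zeta(u))<+\infty$. Finally, the empty set is trivially contained in any connected component of $\mathcal{M}(a)$, so all the hypotheses are met.

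With these checks in place, Lemma \ref{PSsequence} applies directly and produces a Palais-Smale sequence $\{u_n\}\subset\mathcal{M}(a)$ for $E$ restricted on $S(a)$ at the level $m_\mathcal{G}=m(a)$, which is precisely the assertion. As the statement is advertised as a corollary, I do not expect a genuine obstacle; the only points deserving care are the surjectivity of $\zeta$, which is what upgrades $\inf_{S(a)}\Psi$ to $\inf_{\mathcal{M}(a)}E=m(a)$, and the observation that with $B=\emptyset$ the boundary inequality collapses to the already-established strict positivity $m(a)>0$.
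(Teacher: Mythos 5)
Your proposal is correct and follows the same overall strategy as the paper: take $\mathcal{G}$ to be the family of singletons of $S(a)$ with $B=\emptyset$, check the hypotheses, and invoke Lemma \ref{PSsequence}. The one place where you genuinely diverge is the identification $m_\mathcal{G}=m(a)$. The paper writes $m_\mathcal{G}=\inf_{u\in S(a)}\max_{t>0}E(u_t)$ and then proves the two inequalities separately; the nontrivial direction ($E(u)\geq\max_{t>0}E(u_t)$ for $u\in\mathcal{M}(a)$) is obtained there by establishing the pointwise inequality $\varpi(t,s)=\frac{1-t^2}{2}[f(s)s-2F(s)]+t^{-2}F(ts)-F(s)\geq0$, which rests on the monotonicity hypothesis $(f_3)$ and the identity $E(u)-E(u_t)-\frac{1-t^2}{2}J(u)=\int\varpi(t,u)\,dx$. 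You instead observe that $\zeta$ restricts to the identity on $\mathcal{M}(a)$ (by the uniqueness of $t_u$ in Lemma \ref{unique}, $t_v=1$ whenever $J(v)=0$), hence $\zeta(S(a))=\mathcal{M}(a)$ and $\inf_{S(a)}\Psi=\inf_{\mathcal{M}(a)}E$ immediately. This is a cleaner route: it reuses the fibering information already packaged in Lemma \ref{unique} rather than re-deriving the max property from $(f_3)$, at the cost of nothing — the paper's $\varpi$-computation is essentially a second proof of what Lemma \ref{unique} already guarantees. Your attention to the degenerate hypotheses ($\sup\Psi(\emptyset)=-\infty$, so the boundary condition reduces to $0<m(a)<+\infty$, supplied by Lemma \ref{positive} and the nonemptiness of $\mathcal{M}(a)$) matches what the paper leaves implicit.
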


\begin{proof}
Let $\mathcal{G}$ be all singletons in $S(a)$ and $B=\emptyset$, then it is simple to check that $\mathcal{G}$ is a homotopy stable
family of compact subsets of $S(a)$
without boundary. Moreover, we obtain
\[
m_{\mathcal{G}}=\inf_{A\in \mathcal{G}}\max_{u\in A}\Psi(u)=\inf_{u\in S(a)}\max_{t>0}E(u_t).
\]
If we show that $m_{\mathcal{G}}=m(a)$, then we are done by Lemma \ref{PSsequence}.
On the one hand, for each $u\in S(a)$, there is unique $t_u>0$ such that $t_uu(t_u\cdot)\in \mathcal{M}(a)$
and $\max_{t>0}E(u_t)=E(u_{t_u})$ by Lemma \ref{unique}, then
\[
\inf_{u\in S(a)}\max_{t>0}E(u_t)\geq\inf_{u\in \mathcal{M}(a)} E(u).
\]
On the other hand,
we claim that, for all $t>0$, there holds
\[
\varpi(t,s)\triangleq\frac{1-t^2}2[f(s)s-2F(s)]+t^{-2}F(ts)-F(s)\geq0,~\forall s\in\R.
\]
Indeed, it suffices to consider the case $s\geq0$. Using some elementary calculations, one sees that
\begin{align*}
 \frac{\partial}{\partial t}\varpi(t,s) &= t^{-3}[f(ts)ts-2F(ts)]-t[f(s)s-2F(s)]\\
   & =ts^4\left\{  \frac{ f(ts)ts-2F(ts) }{(ts)^4}-\frac{ f(s)s-2F(s) }{s^4}  \right\}\\
&
\left\{
  \begin{array}{ll}
   \geq0, &\text{if}~t\in[1,+\infty), \\
    <0, & \text{if}~t\in(0,1),
  \end{array}
\right.
\end{align*}
where we have adopted $(f_3)$ in the last inequality. Hence, we know that $t\mapsto \varpi(t,s)$
is decreasing in $(0,1)$ and increasing in $(1,+\infty)$ for all $s\in\R$, respectively.
It has that $\varpi(t,s)\geq\min_{t\in(0,+\infty)}\varpi(t,s)=\varpi(1,s)=0$ for all $s\in\R$ and the claim follows.
Due to the claim, for all $u\in S(a)$ and $t>0$,
\[
E(u)-E(u_t)-\frac{1-t^2}{2}J(u)=\int_{\R^2}\left[\frac{1-t^2}2[f(u)u-2F(u)]+t^{-2}F(tu)-F(u)\right]dx\geq0.
\]
Given a $u\in \mathcal{M}(a)$, we immediately conclude that $E(u)\geq \max_{t>0}E(u_t)\geq \inf_{u\in S(a)}\max_{t>0}E(u_t)$
and so
\[
\inf_{u\in \mathcal{M}(a)} E(u)\geq \inf_{u\in S(a)}\max_{t>0}E(u_t).
\]
As a consequence, we must have that $m_{\mathcal{G}}=m(a)$. The proof is completed.
\end{proof}

\begin{lemma}\label{2bounded}
Let $\{u_n\}\subset \mathcal{M}(a)$ be a Palais-Smale sequence for $E$ restricted on $S(a)$
at the level $m(a)$, then there is a uniformly bounded sequence $\{\lambda_n\}\subset \R$ such that
\begin{equation}\label{2bounded1}
   E^\prime(u_n)+\lambda_nu_n=o_n(1)~\emph{\text{in}}~H^{-1}(\R^2).
\end{equation}
\end{lemma}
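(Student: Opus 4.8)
The plan is to exhibit the Lagrange multipliers explicitly and then verify both the asymptotic equation \eqref{2bounded1} and their uniform boundedness by means of the tangent--normal splitting of $H^1(\R^2)$ relative to $u_n$. First I would observe that, since $\{u_n\}\subset\mathcal{M}(a)$ satisfies $E(u_n)\to m(a)$, it is in particular a minimizing sequence for $m(a)$; hence Lemma \ref{bounded} applies and yields that $\{u_n\}$ is bounded in $H^1(\R^2)$, together with the uniform bounds \eqref{bounded1} on $\big\{\int_{\R^2}f(u_n)u_n\,dx\big\}$.

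Guided by the constraint $|u_n|_2^2=a^2$, I would define
\[
\lambda_n\triangleq-\frac{1}{a^2}E^\prime(u_n)[u_n]
=-\frac{1}{a^2}\left(\int_{\R^2}\big[|\nabla u_n|^2+3(A_1^2+A_2^2)|u_n|^2\big]dx-\int_{\R^2}f(u_n)u_n\,dx\right),
\]
so that $(E^\prime(u_n)+\lambda_n u_n)[u_n]=0$ by construction. The uniform boundedness of $\{\lambda_n\}$ is then immediate: the gradient term and the Chern-Simons term $\int_{\R^2}(A_1^2+A_2^2)|u_n|^2dx$ are controlled through the $H^1(\R^2)$-boundedness of $\{u_n\}$ via the estimate \eqref{gauge1}, while $\int_{\R^2}f(u_n)u_n\,dx$ is uniformly bounded thanks to \eqref{bounded1}.

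For the asymptotic relation \eqref{2bounded1}, I would decompose an arbitrary $w\in H^1(\R^2)$ as $w=v+su_n$, where $s=a^{-2}\int_{\R^2}u_nw\,dx$ and $v=w-su_n\in\mathbb{T}_{u_n}$. Using $\int_{\R^2}u_nv\,dx=0$, $|u_n|_2^2=a^2$, and the defining identity $(E^\prime(u_n)+\lambda_nu_n)[u_n]=0$, a direct computation collapses the normal part and gives $(E^\prime(u_n)+\lambda_nu_n)[w]=E^\prime(u_n)[v]$. Since $|s|\leq a^{-1}|w|_2$ and $\{u_n\}$ is bounded in $H^1(\R^2)$, the tangential projection is norm-bounded uniformly in $n$, namely $\|v\|_{H^1(\R^2)}\leq C\|w\|_{H^1(\R^2)}$. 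Consequently,
\[
\big|(E^\prime(u_n)+\lambda_nu_n)[w]\big|=\big|E^\prime(u_n)[v]\big|\leq \big\|E|_{S(a)}^\prime(u_n)\big\|_{H^{-1}(\R^2)}\,\|v\|_{H^1(\R^2)}\leq C\big\|E|_{S(a)}^\prime(u_n)\big\|_{H^{-1}(\R^2)}\|w\|_{H^1(\R^2)},
\]
and taking the supremum over $\|w\|_{H^1(\R^2)}=1$ produces $\|E^\prime(u_n)+\lambda_nu_n\|_{H^{-1}(\R^2)}\leq C\big\|E|_{S(a)}^\prime(u_n)\big\|_{H^{-1}(\R^2)}=o_n(1)$, because $\{u_n\}$ is a Palais--Smale sequence for $E$ restricted on $S(a)$ at the level $m(a)$.

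The only delicate point is the uniformity (in $n$) of the projection constant $C$ appearing in $\|v\|_{H^1(\R^2)}\leq C\|w\|_{H^1(\R^2)}$, which is precisely why the $H^1(\R^2)$-boundedness supplied by Lemma \ref{bounded} is indispensable; the remaining computations are the routine Lagrange-multiplier bookkeeping.
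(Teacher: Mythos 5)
Your proposal is correct and follows essentially the same route as the paper: you define $\lambda_n=-a^{-2}E^\prime(u_n)[u_n]$ exactly as in the paper and obtain its uniform boundedness from Lemma \ref{bounded} together with \eqref{gauge1}. The only difference is that you spell out the tangent--normal decomposition $w=v+su_n$ proving that $\|E|_{S(a)}^\prime(u_n)\|_{H^{-1}(\R^2)}\to0$ implies $\|E^\prime(u_n)+\lambda_nu_n\|_{H^{-1}(\R^2)}\to0$, a step the paper simply asserts as standard.
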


\begin{proof}
Since $\{u_n\}$ is uniformly bounded in $H^1(\R^2)$ by Lemma \ref{bounded}
and $\|E|^\prime_{S(a)}(u_n)\|_{H^{-1}(\R^2)}\to0$, we then obtain
that $\|E^\prime(u_n)-([E^\prime(u_n)]u_n)u_n\|_{H^{-1}(\R^2)}\to0$ and it means that
\[
\int_{\R^2}[ \nabla u_n\nabla\psi  + (A_0+A_1^2+A_2^2)u_n\psi]dx+ \lambda_n\int_{\R^2}u_n\psi dx
-\int_{\R^2} f (u_n) \psi dx\to0
\]
for all $\psi\in H^1(\R^2)$ and the above formula is the desired result \eqref{2bounded1}, where
\begin{align}\label{lambda0}
\nonumber \lambda_n & \triangleq-\frac1{a^2}\left\{\int_{\R^2}[ |\nabla u_n|^2  + 3( A_1^2+A_2^2)u_n^2]dx
-\int_{\R^2} f (u_n)u_ndx\right\} \\
   & = \frac2{a^2}\left\{\int_{\R^2} |\nabla u_n|^2  dx+3\int_{\R^2} F(u_n)dx
-\int_{\R^2} f (u_n)u_ndx\right\}.
\end{align}
Here, we have exploited $J(u_n)=0$ for all $\{u_n\}\subset \mathcal{M}(a)$ in the second equality.
According to Lemma \ref{bounded}, the sequence $\{\lambda_n\}\subset \R$ is uniformly bounded.
The proof is completed.
\end{proof}

With Lemmas \ref{bounded}
and \ref{2bounded} in hands, passing to a subsequence if necessary, there exist a function $u_0\in H^1(\R^2)$
and a constant $\lambda_0\in\R$ such that
\begin{equation}\label{un}
u_n\rightharpoonup u_0 ~\text{in}~H^1(\R^2),~
u_n\to u_0 ~\text{in}~L^p_{\text{loc}}(\R^2)~\text{for all}~2\leq p<+\infty~\text{and}~
u_n\to u_0~\text{a.e. in}~ \R^2
\end{equation}
and
\begin{equation}\label{lambda}
\lambda_n\to\lambda_0.
\end{equation}

To conclude this section, we introduce the following result.

\begin{lemma}\label{solution}
Let $\{u_n\}\subset \mathcal{M}(a)$ be a Palais-Smale sequence for $E$ restricted on $S(a)$
at the level $m(a)$, up to s subsequence if necessary, there holds
\begin{equation}\label{solution1}
\lim_{n\to\infty}\int_{\R^2}f(u_n)\psi dx=\int_{\R^2}f(u_0)\psi dx,~\forall\psi\in C_0^\infty(\R^2),
\end{equation}
where $u_0\in H^1(\R^2)$ comes from \eqref{un}. In particular, $u_0$ is a solution of \eqref{mainequation1}
with $\lambda=\lambda_0$ in \eqref{lambda}.
\end{lemma}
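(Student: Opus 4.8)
The plan is to first establish the weak convergence of the nonlinear term \eqref{solution1} and then pass to the limit term-by-term in the approximate equation \eqref{2bounded1} tested against $\psi\in C_0^\infty(\R^2)$. The only genuinely delicate point is \eqref{solution1}; once it is in hand, the remaining pieces follow from the weak convergence recorded in \eqref{un}, the Br\'ezis--Lieb type Lemma \ref{BLCS}, and the convergence $\lambda_n\to\lambda_0$ in \eqref{lambda}.

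For \eqref{solution1}, I would fix $\psi\in C_0^\infty(\R^2)$ and set $K=\supp\psi$, a set of finite measure. By \eqref{un} we have $u_n\to u_0$ a.e.\ in $\R^2$, so continuity of $f$ gives $f(u_n)\to f(u_0)$ a.e.\ in $K$. To upgrade this to $L^1(K)$-convergence I would check that $\{f(u_n)\}$ is uniformly integrable on $K$ and then invoke Vitali's convergence theorem (here $|K|<+\infty$ is essential). The equi-integrability is exactly where the exponential growth must be controlled, and the main obstacle is to do so \emph{without} appealing to a Trudinger--Moser estimate, which would force a smallness assumption on $|\nabla u_n|_2$. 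Instead, for a measurable $A\subseteq K$ and a parameter $M>0$, I split
\[
\int_A|f(u_n)|\,dx=\int_{A\cap\{|u_n|\le M\}}|f(u_n)|\,dx+\int_{A\cap\{|u_n|>M\}}|f(u_n)|\,dx.
\]
On $\{|u_n|\le M\}$ continuity of $f$ gives $|f(u_n)|\le \max_{|s|\le M}|f(s)|=:C_M$, so the first integral is at most $C_M|A|$, which is small once $|A|$ is small. On $\{|u_n|>M\}$ I write $|f(u_n)|\le M^{-1}|f(u_n)u_n|$ and use the uniform bound $\int_{\R^2}|f(u_n)u_n|\,dx\le C$ supplied by Lemma \ref{bounded} to estimate the second integral by $C/M$, which is small once $M$ is large. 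Hence $\{f(u_n)\}$ is uniformly integrable on $K$, and Vitali's theorem yields $f(u_0)\in L^1(K)$ together with $f(u_n)\to f(u_0)$ in $L^1(K)$; since $\psi\in L^\infty(\R^2)$, this gives \eqref{solution1}.

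With \eqref{solution1} available I would conclude by letting $n\to\infty$ in \eqref{2bounded1} tested against an arbitrary $\psi\in C_0^\infty(\R^2)$, namely in
\[
\int_{\R^2}\big[\nabla u_n\nabla\psi+(A_0[u_n]+A_1^2[u_n]+A_2^2[u_n])u_n\psi\big]dx+\lambda_n\int_{\R^2}u_n\psi\,dx-\int_{\R^2}f(u_n)\psi\,dx=o_n(1).
\]
The gradient term tends to $\int_{\R^2}\nabla u_0\nabla\psi\,dx$ by the weak convergence $u_n\rightharpoonup u_0$ in $H^1(\R^2)$ from \eqref{un}; the three Chern--Simons terms tend to their $u_0$-counterparts by \eqref{BLCS1} of Lemma \ref{BLCS}; the term $\lambda_n\int_{\R^2}u_n\psi\,dx$ tends to $\lambda_0\int_{\R^2}u_0\psi\,dx$ by \eqref{lambda} combined with $u_n\rightharpoonup u_0$; and $\int_{\R^2}f(u_n)\psi\,dx\to\int_{\R^2}f(u_0)\psi\,dx$ by \eqref{solution1}. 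Collecting these limits gives
\[
\int_{\R^2}\big[\nabla u_0\nabla\psi+(A_0[u_0]+A_1^2[u_0]+A_2^2[u_0])u_0\psi\big]dx+\lambda_0\int_{\R^2}u_0\psi\,dx-\int_{\R^2}f(u_0)\psi\,dx=0
\]
for every $\psi\in C_0^\infty(\R^2)$, and by density this identity persists for all $\psi\in H^1(\R^2)$. Thus $u_0$ is a weak solution of \eqref{mainequation1} with $\lambda=\lambda_0$.

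I expect the uniform-integrability step to be the sole real difficulty. Its resolution is exactly the payoff of the $E(u_n)-\tfrac12 J(u_n)$ computation behind Lemma \ref{bounded}, which delivers the uniform $L^1$-bound on $f(u_n)u_n$: this bound lets me bypass any exponential-integrability estimate, and in particular any constraint on the size of $|\nabla u_n|_2$, when passing to the limit in the nonlinear term. A pleasant consequence is that the argument is insensitive to whether $f$ is of subcritical, critical, or supercritical exponential type, so the same reasoning will be reusable in the later sections.
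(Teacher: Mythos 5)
Your proposal is correct and is essentially the paper's own argument: the paper simply declares \eqref{solution1} ``standard'' and cites de Figueiredo--Miyagaki--Ruf, whose Lemma~2.1 is precisely your splitting of $\int_A|f(u_n)|\,dx$ over $\{|u_n|\le M\}$ and $\{|u_n|>M\}$, with the uniform $L^1$-bound on $f(u_n)u_n$ from Lemma~\ref{bounded} controlling the second piece and Vitali's theorem closing the argument. The passage to the limit in the equation via \eqref{un}, \eqref{BLCS1} and \eqref{lambda} is exactly what the paper intends, so nothing further is needed.
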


\begin{proof}
Since $E(u_n)= m(a)+o_n(1)$ and \eqref{2bounded1} jointly with \eqref{solution1},
the proof is standard and we omit it here, see e.g. \cite{Figueiredo}.
\end{proof}

\section{The subcritical case}\label{Section3}

In this section, we give the proof of Theorem \ref{maintheorem1}.
For simplicity, we shall always suppose that the nonlinearity $f$ satisfies \eqref{definitionsubcriticalgrowth}
and $(f_1)-(f_3)$ throughout the present section.

First of all, since $f$ admits the subcritical exponential growth at infinity,
then we have the following Br\'{e}zis-Lieb type lemma associated with $f$.

\begin{lemma}\label{fBrezis-Lieb}
Let $\{u_n\}\in H^1(\R^2)$ satisfies \eqref{un}, going to a subsequence if necessary, we have
\begin{equation}\label{fBrezis-Lieb1}
\lim_{n\to\infty} \int_{\R^2}[F(u_n) -F(u_n-u_0)]dx = \int_{\R^2}F(u_0)dx
\end{equation}
\begin{equation}\label{fBrezis-Lieb2}
\lim_{n\to\infty} \int_{\R^2}[f(u_n)u_n -f(u_n-u_0)(u_n-u_0)]dx = \int_{\R^2}f(u_0)u_0 dx
\end{equation}
\end{lemma}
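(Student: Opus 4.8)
The plan is to prove \eqref{fBrezis-Lieb1} in detail and to observe that \eqref{fBrezis-Lieb2} follows by the same scheme. Write $v_n = u_n - u_0$, so that \eqref{un} gives $v_n \rightharpoonup 0$ in $H^1(\R^2)$ and $v_n \to 0$ a.e. in $\R^2$; moreover $\{v_n\}$ is bounded in $H^1(\R^2)$ as a consequence of Lemma \ref{bounded}. Using the fundamental theorem of calculus and $F' = f$, I would rewrite the integrand as
\[
F(u_n) - F(v_n) = F(v_n + u_0) - F(v_n) = \int_0^1 f(v_n + \tau u_0)\, u_0 \, d\tau .
\]
Since $f$ is continuous and $v_n \to 0$ a.e., the integrand converges pointwise a.e. to $\int_0^1 f(\tau u_0) u_0 \, d\tau = F(u_0)$. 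The statement therefore reduces to justifying passage to the limit under the integral sign, and the natural device is the Vitali convergence theorem: it suffices to prove that the family $\{F(u_n) - F(v_n)\}$ is uniformly integrable and tight.

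For the uniform integrability I would exploit the subcritical growth. By \eqref{growth1}, for any $\varepsilon > 0$ and any $\alpha > 0$,
\[
|f(v_n + \tau u_0)| \le \varepsilon |v_n + \tau u_0|^{\chi - 1} + C_\varepsilon |v_n + \tau u_0|^{q-1}\big(e^{\alpha (v_n + \tau u_0)^2} - 1\big).
\]
The only delicate term is the exponential one. Applying the elementary inequality $(v_n + \tau u_0)^2 \le (1 + \eta) v_n^2 + C_\eta u_0^2$ for a small $\eta > 0$ gives
\[
e^{\alpha (v_n + \tau u_0)^2} \le e^{\alpha (1+\eta) v_n^2}\, e^{\alpha C_\eta u_0^2}.
\]
Because $f$ is subcritical, $\alpha > 0$ is at my disposal, so writing $K = \sup_n |\nabla v_n|_2^2$ I may fix $\alpha(1+\eta)$ small enough that $2\alpha(1+\eta) K < 4\pi$; normalizing $v_n$ by $\sqrt{K}$, Lemma \ref{TM} (in the form \eqref{TM2}) then yields $\sup_n \int_{\R^2} (e^{2\alpha(1+\eta) v_n^2} - 1)\,dx < +\infty$. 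A H\"older splitting into this exponential factor, the fixed factor $e^{\alpha C_\eta u_0^2} |u_0|$ (integrable by \eqref{TM1}), and the uniformly $L^p$-bounded polynomial factors $|v_n + \tau u_0|^{\chi-1}$, $|v_n + \tau u_0|^{q-1}$ then produces a uniform $L^1$ bound together with equi-integrability of $\{F(u_n) - F(v_n)\}$; tightness follows from the common factor $|u_0|$, which decays at infinity. Vitali's theorem closes \eqref{fBrezis-Lieb1}.

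For \eqref{fBrezis-Lieb2} I would start from the pointwise identity $f(u_n)u_n - f(v_n)v_n \to f(u_0)u_0$ a.e. and repeat the estimate, now bounding $|f(u_n)u_n - f(v_n)v_n|$ through \eqref{growth1} and the $\mathcal{C}^1$ regularity of $f$ in $(f_1)$ to absorb the extra linear factor; the exponential control is word-for-word the same.

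I expect the main obstacle to be precisely the uniform control of the exponential moment $\sup_n \int_{\R^2} (e^{2\alpha(1+\eta)v_n^2}-1)\,dx$: one must choose $\eta$ and $\alpha$ \emph{after} freezing the $H^1$ bound $K$ on $\{v_n\}$, so the split of the cross term $(v_n + \tau u_0)^2$ and the subcriticality of $f$ (which frees $\alpha$ from any positive lower threshold) have to be used in tandem. This is exactly the mechanism that, in the subcritical regime, dispenses with any smallness restriction on the mass $a^2$, in contrast with the constrained Trudinger--Moser estimate needed in the critical case.
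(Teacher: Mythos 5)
The paper itself gives no argument here (it simply declares the proof standard), so there is nothing to compare against; judged on its own, your treatment of \eqref{fBrezis-Lieb1} is the standard one and is essentially correct: the representation $F(u_n)-F(v_n)=\int_0^1 f(v_n+\tau u_0)u_0\,d\tau$, the pointwise limit, the splitting $(v_n+\tau u_0)^2\le(1+\eta)v_n^2+C_\eta u_0^2$, the uniform exponential moment from \eqref{TM2} (possible precisely because \eqref{definitionsubcriticalgrowth} lets you take $\alpha$ as small as you like after freezing $K=\sup_n|\nabla v_n|_2^2$), and Vitali with the localizing factor $|u_0|$ all go through. One small inaccuracy: $e^{\alpha C_\eta u_0^2}|u_0|$ is \emph{not} integrable on $\R^2$ (the exponential tends to $1$ at infinity and $u_0\in H^1$ need not lie in $L^1$); what saves you is that in the H\"older splitting the factor $|u_0|$ is always paired with the polynomial factors $|v_n+\tau u_0|^{\chi-1}$ or $|v_n+\tau u_0|^{q-1}$, whose total power is at least $2$, and with $e^{\alpha C_\eta u_0^2}-1\in L^1$ from \eqref{TM1}. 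State the splitting that way and the estimate is clean.

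The genuine gap is in \eqref{fBrezis-Lieb2}. You assert that the argument is ``word-for-word the same,'' using \eqref{growth1} and the $\mathcal{C}^1$ regularity of $f$ ``to absorb the extra linear factor.'' But applying your scheme to $j(s)=f(s)s$ means writing $j(u_n)-j(v_n)=\int_0^1\big[f^\prime(v_n+\tau u_0)(v_n+\tau u_0)+f(v_n+\tau u_0)\big]u_0\,d\tau$, and this brings in $f^\prime$ at large arguments. The hypotheses $(f_1)$--$(f_3)$ and \eqref{definitionsubcriticalgrowth} impose a growth restriction on $f$ only, not on $f^\prime$: $(f_1)$ gives mere $\mathcal{C}^1$ regularity (hence local, not global, bounds on $f^\prime$), and $(f_3)$ yields only a one-sided lower bound of the type $f^\prime(s)s^2-5f(s)s+8F(s)>0$. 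So the term $f^\prime(v_n+\tau u_0)(v_n+\tau u_0)u_0$ cannot be dominated by anything you control, and the step fails as written. The standard repair is to avoid differentiating: invoke the Br\'ezis--Lieb lemma in its $\varepsilon$-splitting form, namely that if $j$ is continuous, $j(0)=0$, and for every $\varepsilon>0$ there are nonnegative continuous $\varphi_\varepsilon,\psi_\varepsilon$ with $|j(a+b)-j(a)|\le\varepsilon\varphi_\varepsilon(a)+\psi_\varepsilon(b)$, $\sup_n\int_{\R^2}\varphi_\varepsilon(v_n)\,dx<+\infty$ and $\int_{\R^2}\psi_\varepsilon(u_0)\,dx<+\infty$, then $\int_{\R^2}|j(u_n)-j(v_n)-j(u_0)|\,dx\to0$. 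For $j(s)=f(s)s$ one verifies this splitting inequality directly from \eqref{growth1}, elementary Young-type inequalities for $|a+b|^{q}$ and for $e^{\alpha(a+b)^2}-1$, and Lemma \ref{TM}, with no reference to $f^\prime$. With that substitution both limits follow; without it, \eqref{fBrezis-Lieb2} is not proved.
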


\begin{proof}
The proof is standard and we omit it here.
\end{proof}

As a direct consequence of Lemma \ref{fBrezis-Lieb}, we immediately derive the results below that play
crucial roles in this section.

\begin{lemma}\label{EBrezis-Lieb}
Let $\{u_n\}\in H^1(\R^2)$ satisfies \eqref{un}, going to a subsequence if necessary, we have
\begin{equation}\label{EBrezis-Lieb1}
  \lim_{n\to\infty} [E(u_n)-E(u_n-u_0)]=E(u_0)
\end{equation}
and
\begin{equation}\label{EBrezis-Lieb2}
\lim_{n\to\infty} [J(u_n)-J(u_n-u_0)]=J(u_0).
\end{equation}
\end{lemma}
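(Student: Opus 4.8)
The goal is to prove Lemma~\ref{EBrezis-Lieb}, the Br\'{e}zis-Lieb type decompositions for the functionals $E$ and $J$ along a sequence satisfying \eqref{un}. The plan is to treat this as a routine assembly of three independent Br\'{e}zis-Lieb-type splittings, one for each structural piece of the functionals: the Dirichlet term $\int_{\R^2}|\nabla u|^2dx$, the Chern-Simons term $\int_{\R^2}(A_1^2+A_2^2)|u|^2dx$, and the nonlinear terms $\int_{\R^2}F(u)dx$ and $\int_{\R^2}[f(u)u-2F(u)]dx$. Since both $E$ and $J$ are built from exactly these ingredients, once the three splittings are in hand the identities \eqref{EBrezis-Lieb1} and \eqref{EBrezis-Lieb2} follow by linear combination.

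First I would record the classical Br\'{e}zis-Lieb lemma for the gradient term: from $u_n\rightharpoonup u_0$ in $H^1(\R^2)$ and $u_n\to u_0$ a.e., one has the standard identity
\[
\lim_{n\to\infty}\Big[\int_{\R^2}|\nabla u_n|^2dx-\int_{\R^2}|\nabla(u_n-u_0)|^2dx\Big]=\int_{\R^2}|\nabla u_0|^2dx,
\]
which is immediate because weak convergence of $\nabla u_n$ in $L^2$ gives $|\nabla u_n|_2^2=|\nabla(u_n-u_0)|_2^2+|\nabla u_0|_2^2+o_n(1)$. Next, the Chern-Simons term is handled directly by the already-stated Lemma~\ref{BLCS}: its conclusion \eqref{BLCS2} asserts precisely that
\[
\lim_{n\to\infty}\int_{\R^2}\big[A_j^2[u_n]|u_n|^2-A_j^2[u_n-u_0]|u_n-u_0|^2\big]dx=\int_{\R^2}A_j^2[u_0]|u_0|^2dx
\]
for $j=1,2$, so summing over $j$ gives the desired splitting for $\int_{\R^2}(A_1^2+A_2^2)|u|^2dx$. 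Finally, the nonlinear terms are supplied by Lemma~\ref{fBrezis-Lieb}: \eqref{fBrezis-Lieb1} is the splitting for $\int_{\R^2}F(u)dx$ and \eqref{fBrezis-Lieb2} the splitting for $\int_{\R^2}f(u)u\,dx$; combining them handles $\int_{\R^2}[f(u)u-2F(u)]dx$.

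With these three pieces, I would assemble the result. For \eqref{EBrezis-Lieb1}, write $E(u_n)-E(u_n-u_0)$ as $\tfrac12$ times the gradient splitting plus $\tfrac12$ times the Chern-Simons splitting minus the $F$-splitting, and pass to the limit using the three facts above to obtain exactly $E(u_0)$. For \eqref{EBrezis-Lieb2}, proceed identically, replacing the factor $\tfrac12$ by $1$ and the potential term by $\int_{\R^2}[f(u)u-2F(u)]dx$, whose limit is controlled by \eqref{fBrezis-Lieb1}--\eqref{fBrezis-Lieb2}.

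I do not expect a genuine obstacle here, since every nontrivial convergence has been isolated into a previously established lemma; the only point requiring mild care is the Chern-Simons term, where the nonlocal and cubic structure of $A_j[u]$ would normally be delicate, but this difficulty is entirely absorbed by the quoted Lemma~\ref{BLCS}. Thus the proof is essentially bookkeeping, and indeed I expect the authors to say the result follows at once from Lemmas~\ref{BLCS} and~\ref{fBrezis-Lieb} together with the classical Br\'{e}zis-Lieb splitting of the gradient norm.
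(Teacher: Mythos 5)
Your proof is correct and follows essentially the same route as the paper: the gradient term is split by the Hilbert-space identity for weak limits, the Chern--Simons term by Lemma \ref{BLCS} (indeed \eqref{BLCS2} is the relevant conclusion, which you cite correctly, whereas the paper's proof nominally points to \eqref{BLCS1}), and the nonlinear terms by Lemma \ref{fBrezis-Lieb}. The assembly of \eqref{EBrezis-Lieb1} and \eqref{EBrezis-Lieb2} by linear combination is exactly what the paper does.
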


\begin{proof}
Since $u_n\rightharpoonup u$ as $n\to\infty$, it infers from the basic property of Hilbert space that
\[\lim_{n\to\infty} \int_{\R^2}[|\nabla u_n|^2 -|\nabla u_n-\nabla u_0|^2]dx = \int_{\R^2}|\nabla u_0 |^2dx\]
which together with \eqref{BLCS1} and \eqref{fBrezis-Lieb1}-\eqref{fBrezis-Lieb2}
yields the desired results. The proof is completed.
\end{proof}

Before showing the detailed proof of Theorem \ref{mainequation1},
we need to certify that the minimization $m(a)$ is non-increasing with respect to $a$.

\begin{lemma}\label{non-increasing}
For all $0<a_1<a_2$, there holds that $m(a_1)\geq m(a_2)$.
\end{lemma}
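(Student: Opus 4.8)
The plan is to combine the minimax characterisation $m(a)=\inf_{u\in S(a)}\max_{t>0}E(u_t)$ obtained in the proof of Lemma \ref{2PSsequence} with the $L^2$-preserving scaling $u_t(x)=tu(tx)$ and a mass-adjusting amplitude dilation. Put $\mu:=a_2/a_1>1$. The strategy is: take an arbitrary profile $u\in S(a_1)$, transplant $v:=\mu u\in S(a_2)$, use the fibre map of Lemma \ref{unique} on $S(a_2)$ to bound $m(a_2)$ from above by the fibre maximum of $v$, and compare that maximum with the fibre maximum of $u$.

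First I would record how each ingredient of $E$ transforms. A change of variables in \eqref{CSS2e1}--\eqref{CSS2e2} gives $A_j[u_t](x)=tA_j[u](tx)$, and since $A_j[\cdot]$ is homogeneous of degree two, $A_j[\mu u]=\mu^2A_j[u]$. Hence, writing $K:=\int_{\R^2}|\nabla u|^2\,dx$, $C:=\int_{\R^2}(A_1^2+A_2^2)|u|^2\,dx$ and $\gamma(t):=E(u_t)$, the gradient term of $v$ scales like $\mu^2$ while the Chern--Simons term scales like $\mu^6$. Since $v_t=\mu u_t$, reparametrising the fibre of $v$ through $t\mapsto t/\mu$ yields
\[
\max_{t>0}E(v_t)=\max_{t>0}\widetilde\Phi(t),\qquad
\widetilde\Phi(t)=\gamma(t)+\tfrac{t^2}{2}(\mu^4-1)C-(\mu^2-1)\,t^{-2}\!\int_{\R^2}F(tu)\,dx .
\]
By Lemma \ref{unique} there is a unique $t_v>0$ with $v_{t_v}\in\mathcal M(a_2)$ realising $\max_{t>0}E(v_t)$, so that $m(a_2)\le\max_{t>0}E(v_t)=\max_{t>0}\widetilde\Phi(t)$ for every $u\in S(a_1)$.

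The comparison is transparent when the gauge field is absent: if $C=0$ then $\widetilde\Phi(t)=\gamma(t)-(\mu^2-1)t^{-2}\int_{\R^2}F(tu)\,dx\le\gamma(t)$, because $\mu>1$ and $F\ge0$, whence $\max_{t>0}\widetilde\Phi\le\max_{t>0}\gamma=\max_{t>0}E(u_t)$; taking the infimum over $u\in S(a_1)$ then gives $m(a_2)\le m(a_1)$. The genuinely delicate point is the surplus term $\tfrac{t^2}{2}(\mu^4-1)C$. Evaluating at the maximiser $t^*$ of $\widetilde\Phi$ and using $\gamma(t^*)\le\gamma(1)$, it suffices to establish
\[
\tfrac{(t^*)^4}{2}(\mu^2+1)\,C\le\int_{\R^2}F(t^*u)\,dx ,
\]
where I have used $\tfrac{\mu^4-1}{\mu^2-1}=\mu^2+1$. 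To close this I would invoke that $F$ is $\theta$-superlinear with $\theta>4$ by $(f_2)$ (so $t^{-2}\int_{\R^2}F(tu)\,dx$ grows strictly faster than $t^2$ and dominates the quadratic gauge term at the relevant scale), combined with the degree-six bound $C\le\bar C_r\|u\|_{H^1(\R^2)}^6$ coming from \eqref{gauge1} and the a priori bounds of Lemmas \ref{positive}--\ref{bounded}, which keep $C$ small relative to $\int_{\R^2}F(t^*u)\,dx$ in the mass range under consideration; the monotonicity hypothesis $(f_3)$ pins down the location of $t^*$ through the first-order condition $(t^*)^4(K+\mu^4C)=\mu^2\int_{\R^2}\bar F(t^*u)\,dx$.

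The main obstacle is precisely this last step. Unlike the pure Schr\"odinger situation, the Chern--Simons energy scales like $\mu^6$ under the amplitude dilation while the kinetic energy scales only like $\mu^2$, so the clean inequality $\widetilde\Phi\le\gamma$ is destroyed and one must genuinely prove that the superlinear nonlinearity absorbs the extra gauge contribution at the maximising scale $t^*$. This is exactly the type of technical estimate the authors flag as arising from the Chern--Simons term, and it is where the structural assumptions $(f_2)$--$(f_3)$ and the smallness provided by \eqref{gauge1} are indispensable.
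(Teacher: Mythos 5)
Your reduction to the inequality $\tfrac{(t^*)^4}{2}(\mu^2+1)C\le\int_{\R^2}F(t^*u)\,dx$ is where the argument breaks, and it cannot be repaired within your framework: that inequality is generally \emph{false}. Indeed, the stationarity condition you yourself record, $(t^*)^4(K+\mu^4C)=\mu^2\int_{\R^2}\bar F(t^*u)\,dx$, combined with $(f_2)$ (which gives $\int F\le\frac{1}{\theta-2}\int\bar F$, and for the model case $F(s)=|s|^\theta$ gives equality), yields $\int_{\R^2}F(t^*u)\,dx\le\frac{(t^*)^4(K+\mu^4C)}{(\theta-2)\mu^2}$; your required bound then forces $\frac{\mu^2+1}{2}\le\frac{K+\mu^4C}{(\theta-2)\mu^2C}$, which fails for $\mu$ large whenever $C>0$ (since $\theta>4$), and already fails for $\mu$ near $1$ unless $C\le K/(\theta-3)$. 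One can check directly with $F(s)=|s|^\theta$ that $\max_{t>0}E((\mu u)_t)/\max_{t>0}E(u_t)=\mu^{-2\theta/(\theta-4)}\bigl((\mu^2K+\mu^6C)/(K+C)\bigr)^{(\theta-2)/(\theta-4)}$, which tends to $+\infty$ as $\mu\to\infty$ for any fixed $u$ with $C>0$. So the amplitude dilation $u\mapsto\mu u$ genuinely increases the fibre maximum once the Chern--Simons energy is nonnegligible; no appeal to $(f_2)$--$(f_3)$ or to \eqref{gauge1} can save it, and invoking ``the mass range under consideration'' does not help because the lemma is asserted for \emph{all} $0<a_1<a_2$ with no smallness restriction.

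The paper sidesteps amplitude dilation entirely: starting from a near-minimizer $u$ of $m(a_1)$, it truncates $u$ to compact support ($u_{\varepsilon_0}=\psi(\varepsilon_0\cdot)u$), and adds the missing mass $a_2^2-|u_{\varepsilon_0}|_2^2$ via a bump $v_{\varepsilon_0}$ supported in a far-away annulus, so that $w_t=u_{\varepsilon_0}+(v_{\varepsilon_0})_t\in S(a_2)$ with disjoint supports. The kinetic and potential energies then split exactly, the Chern--Simons cross terms are shown to be $o_t(1)$ as $t\to0^+$ (Lemma A.5, using the explicit convolution formulas \eqref{CSS2e1}--\eqref{CSS2e2} and Lemma \ref{gauge}), and the bump's own energy $E(((v_{\varepsilon_0})_t)_{t_{w_t}})$ vanishes as $t\to0^+$ because $t_{w_t}$ stays in a compact interval of $(0,+\infty)$. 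This gives $m(a_2)\le m(a_1)+\epsilon$ for every $\epsilon>0$ without any comparison of fibre maxima under rescaling of amplitude. If you want to keep a dilation-based proof, you would need a transformation under which the Chern--Simons term scales no faster than the kinetic term, which the $L^2$-constraint rules out; the disjoint-support construction is the standard substitute.
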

\begin{proof}
According to the definition of $m(a_1)$, for any $\epsilon>0$,
there is a $u\in \mathcal{M}(a)$ such that
\[
E(u)\leq m(a_1)+\frac\epsilon4.
\]
Let us
consider a cutoff function $\psi\in C_0^\infty(\R^2,[0,1])$ such that $\psi(x)=1$
if $|x|\leq1$ and $\psi(x)=0$
if $|x|\geq2$ and define $u_\varepsilon(x)\triangleq \psi(\varepsilon x)u(x)\in H^1(\R^2)\backslash\{0\}$,
then $u_\varepsilon \to u$ in $H^1(\R^2)$ as $\varepsilon\to0^+$.
In view of Lemma \ref{unique}, there is a unique $t_\varepsilon\triangleq t_{u_\varepsilon}$ such that
$t_{u_\varepsilon} u_\varepsilon(t_{u_\varepsilon}\cdot)\in \mathcal{M}(a)$
and $t_\varepsilon\to t_u\triangleq t_0=1$ as $\varepsilon\to0^+$.

As a consequence, we conclude that $E((u_\varepsilon)_{t_\varepsilon})\to E(u)$ as $\varepsilon\to0^+$ with the help of
 Lemma A.2 in the Appendix below and then one can
fix a sufficiently small $\varepsilon_0>0$ to satisfy
\[
E((u_{\varepsilon_0})_{t_{\varepsilon_0}})\leq E(u)+\frac\epsilon4\leq m(a_1)+\frac 12\epsilon.
\]
Let $v\in C_0^\infty(\mathbb{R})$ satisfy $\supp v \subset B_{1+\frac{4}{\varepsilon_0}}(0)\backslash B_{\frac{4}{\varepsilon_0}}(0)$
and define
\[
v_{\varepsilon_0}=\frac{a_2^2- |u_{\varepsilon_0} |_2^2}{ |v |_2^2}v.
\]
We choose $w_t=u_{\varepsilon_0}+t v_{\varepsilon_0}(t\cdot )=u_{\varepsilon_0}+(v_{\varepsilon_0})_t$ for all $t\in(0,1)$,
then it is simple to calculate that
\[
\text{dist}_{\R^2}(\supp u_{\varepsilon_0},\supp (v_{\varepsilon_0})_t)\geq \frac2{\varepsilon_0}
\left(\frac2t-1\right)>\frac2{\varepsilon_0}>0
\]
and so $|w_t|_2^2=a_2^2$ which means that
$w_t\in S(a_2)$. It simply invokes from Lemma \ref{unique} that there exists a
$t_{w_t}>0$ such that $(w_t)_{t_{w_t}}\triangleq t_{w_t}w_t(t_{w_t}\cdot)\in \mathcal{M}(a_2)$.
We claim that $\{t_{w_t}\}$ is uniformly bounded from above in $t\in(0,1)$.
Indeed, $\|w_t\|^2_{H^1(\R^2)}=\|u_{\varepsilon_0}\|^2_{H^1(\R^2)}+\|(v_{\varepsilon_0})_t\|^2_{H^1(\R^2)}
\leq \|u_{\varepsilon_0}\|^2_{H^1(\R^2)}+\|v_{\varepsilon_0} \|^2_{H^1(\R^2)}<+\infty$, we argue as the proof of Lemma \ref{unique}
to reach the claim. Similarly, $\|w_t\|^2_{H^1(\R^2)}\geq\|u_{\varepsilon_0}\|^2_{H^1(\R^2)}>0$,
we can derive that $\{t_{w_t}\}$ is uniformly bounded from below by a positive constant in $t\in(0,1)$.
All in all, there are constants $T_1,T_2>0$ independent of $t\in(0,1)$ such that
$0<T_1\leq t_{w_t}\leq T_2<+\infty$. So
 \[
\text{dist}_{\R^2}\big(\supp (u_{\varepsilon_0})_{t_{w_t}},\supp ((v_{\varepsilon_0})_t)_{t_{w_t}}\big)\geq \frac2{t_{w_t}\varepsilon_0}
\left(\frac2t-1\right)>\frac{2 }{T_2\varepsilon_0}>0
\]
which indicates that
\[
\left\{
  \begin{array}{ll}
\displaystyle    \int_{\R^2}|\nabla(w_t)_{t_{w_t}}|^2dx= \int_{\R^2}|\nabla (u_{\varepsilon_0})_{t_{w_t}}|^2dx
+\int_{\R^2}|\nabla((v_{\varepsilon_0})_t)_{t_{w_t}}|^2dx,\\
\displaystyle    \int_{\R^2}F((w_t)_{t_{w_t}})dx=\int_{\R^2}F((u_{\varepsilon_0})_{t_{w_t}})dx+\int_{\R^2}F(((v_{\varepsilon_0})_t)_{t_{w_t}})dx,
  \end{array}
\right.
\]
and jointly with \eqref{CSS2e1} and \eqref{CSS2e2} yields that
\[
A_j[(w_t)_{t_{w_t}}]
=A_j[(u_{\varepsilon_0})_{t_{w_t}}]+A_j[((v_{\varepsilon_0})_{t})_{t_{w_t}}],~\text{for}~j=1,2.
\]
According to Lemma A.5 in the Appendix below, there holds
\begin{eqnarray*}
  & & \int_{\R^2}A_j^2[(w_t)_{t_{w_t}}]|(w_t)_{t_{w_t}}|^2dx \\
    &=&
\int_{\R^2}(
A_j^2[(u_{\varepsilon_0})_{t_{w_t}}]+A_j^2[((v_{\varepsilon_0})_{t})_{t_{w_t}}]
+2A_j[(u_{\varepsilon_0})_{t_{w_t}}]A_j[((v_{\varepsilon_0})_{t})_{t_{w_t}}]
)(|(u_{\varepsilon_0})_{t_{w_t}}|^2+|((v_{\varepsilon_0})_t)_{t_{w_t}}|^2)dx \\
&=&\int_{\R^2}(
A_j^2[(u_{\varepsilon_0})_{t_{w_t}}]|(u_{\varepsilon_0})_{t_{w_t}}|^2+A_j^2[((v_{\varepsilon_0})_{t})_{t_{w_t}}]|((v_{\varepsilon_0})_t)_{t_{w_t}}|^2)dx
+o_t(1),
\end{eqnarray*}
where $o_t(1)\to 0$ as $t\to0^+$. Consequently, by $(u_{\varepsilon_0})_{t_{\varepsilon_0}}\in \mathcal{M}(a)$,
we obtain that
\[
E\big((w_t)_{t_{w_t}}\big)=E\big((u_{\varepsilon_0})_{t_{w_t}} \big)
+E\big( ((v_{\varepsilon_0})_{t})_{t_{w_t}} \big)+o_t(1)
\leq E\big((u_{\varepsilon_0})_{t_{\varepsilon_0}} \big)
+E\big( ((v_{\varepsilon_0})_{t})_{t_{w_t}} \big)+o_t(1).
\]
On the other hand, one easily exploits $(f_1)$ to verify that
\[
E(((v_{\varepsilon_0})_{t})_{t_{w_t}})=\frac{t^2t^2_{w_t}}{2}
\int_{\R^2}[|\nabla v_{\varepsilon_0}|^2 + (A_1^2+A_2^2)|v_{\varepsilon_0}|^2]dx
-t^{-2}t^{-2}_{w_t}\int_{\R^2}F(tt_{w_t}v_{\varepsilon_0})dx=o_t(1).
\]
Finally, since $(w_t)_{t_{w_t}}\in \mathcal{M}(a_2)$, we
could fix $t>0$ which is so close to $0^+$ such that
\begin{align*}
   m(a_2) & \leq E((w_t)_{t_{w_t}})
\leq E((u_{\varepsilon_0})_{t_{\varepsilon_0}})+\frac\epsilon2   \leq m(a_1)+ \epsilon
 \end{align*}
finishing the proof since $\epsilon>0$ is arbitrary.
\end{proof}

Now, we are ready to exhibit the proof of Theorem \ref{maintheorem1}.

\begin{proof}[\emph{\textbf{Proof of Theorem \ref{maintheorem1}}}]
According to Lemma \ref{PSsequence},
there is a Palais-Smale sequence for $E$ restricted on $S(a)$
at the level $m(a)$ and we denoted it by $\{u_n\}$. Using Lemmas \ref{bounded}
and \ref{2bounded}, we derive the two sequences appeared in \eqref{lambda}
and \eqref{un}. To show the proof clearly, we split it into three steps.

\textbf{Step 1.} Without loss of generality, we could assume that the weak limit $u_0\neq0$.

Since $\{u_n\}\subset H^1(\R^2)$ is uniformly bounded, for some $\rho>0$, then we have the following two cases
\[
\text{Case I}: \lim_{n\to\infty}\sup_{y\in\R^2}\int_{B_\rho(y)}|u_n|^2dx=0;~
\text{Case II}: \lim_{n\to\infty}\sup_{y\in\R^2}\int_{B_\rho(y)}|u_n|^2dx>0.
\]
If the Case I occurs, then $u_n\to0$ in $L^p(\R^2)$ for all $2<p<+\infty$ by the Vanishing lemma.
Exploiting some simple calculations, we obtain
\[
\lim_{n\to\infty}\int_{\R^{2}} F(u_n) dx=0~\text{and}\lim_{n\to\infty}\int_{\R^{2}}f(u_n)u_n dx=0
\]
which indicate that
\[
m(a)=\lim_{n\to\infty}[E(u_n)-\frac12J(u_n)]= \frac{1}{2}\lim_{n\to\infty}\int_{\R^{2}}\left[f(u_n)u_n-4 F(u_n)\right]dx=0.
\]
It is impossible by Lemma \ref{positive}. So, the Case (II) must occur and there is a sequence $\{y_n\}\subset\R^2$
such that $\lim\limits_{n\to\infty}\int_{B_\rho(0)}|v_n|^2dx>0$, where $v_n=u_n(\cdot-y_n)$.
Recalling both $E$ and $J$ are translation-invariant, then $\{v_n\}\subset \mathcal{M}(a)$ is a Palais-Smale sequence for $E$ restricted on $S(a)$
at the level $m(a)$. Repeating the previous arguments, there is a $v\in H^1(\R^2)$ such that
$v_n\rightharpoonup v$ in $H^1(\R^2)$, $v_n\to v$ in $L_{\text{loc}}^p(\R^2)$ for all $2<p<+\infty$ and $v_n\to v$
a.e. in $\R^2$. Clearly, we deduce that $v\neq0$. Regardless of the abuse of notations,
we adopt the sequence $\{u_n\}$ and $u_0\neq0$. The proof is done.

\textbf{Step 2.} $E(u_0)=m(a)$ and $\lim\limits_{n\to\infty}|\nabla u_n|_2^2=|\nabla u_0|_2^2$.

In view of lemma \ref{solution}, $u_0$ is a solution of \eqref{mainequation1}
with $\lambda=\lambda_0$ in \eqref{lambda} and so $J(u_0)=0$ by Lemma \ref{Pohozaev}.
Since $\{u_n\}\subset \mathcal{M}(a)$ which is $J(u_n)=0$
and then $\lim\limits_{n\to\infty}J(u_n-u_0)=0$ by \eqref{EBrezis-Lieb2}.
Thus, we have
\begin{align}\label{1proof1}
\nonumber E(u_n-u_0)&  =E(u_n-u_0)-\frac12 J(u_n-u_0) +o_n(1)\\
\nonumber   &= \frac{1}{2}\int_{\R^{2}}\left[f(u_n-u_0)(u_n-u_0)-4 F(u_n-u_0)\right]dx +o_n(1)\\
   &  \geq \frac{\theta-4}{2}\int_{\R^{2}} F(u_n-u_0) dx+o_n(1)\geq o_n(1)
\end{align}
It follows from the Fatou's lemma that $|u_0|_2^2\leq \lim\limits_{n\to\infty}|u_n|_2^2\leq a^2$.
In light of $u_0\neq0$ by Step 1, we are derived from \eqref{EBrezis-Lieb1} and Lemma \ref{non-increasing} that
\begin{equation}\label{1proof2}
  \lim_{n\to\infty}E(u_n-u_0)+E(u_0)=\lim_{n\to\infty}E(u_n)=m(a)\leq m(|u_0|_2^2)\leq E(u_0).
\end{equation}
 It follows from \eqref{1proof1} and \eqref{1proof2} that
 \[
\lim_{n\to\infty}\int_{\R^{2}} F(u_n-u_0) dx=0
\]
and so
\[
\lim_{n\to\infty}\int_{\R^{2}} f(u_n-u_0) (u_n-u_0)dx=0.
\]
Combining the above two formulas and $\lim\limits_{n\to\infty}J(u_n-u_0)=0$ again,
we conclude that
\[
\lim_{n\to\infty}\int_{\R^{2}}[ |\nabla u_n-\nabla u_0|^2+(A_1^2[u_n-u_0]+A_2^2[u_n-u_0])|u_n-u_0|^2]dx=0.
\]
Furthermore, we obtain
$\lim\limits_{n\to\infty}E(u_n-u_0)=0$ jointly with \eqref{1proof1} gives that $E(u_0)=m(a)$.

\textbf{Step 3.} There is a small $a^*>0$ such that $\lambda_0>0$ for all $a\in(0,a^*]$.
So, $|u_0|_2^2=a^2$.

 It follows from \eqref{lambda0} and \eqref{lambda} as well the calculations in Step 2 that
\[
\lambda_0=\frac2{a^2}\left\{\int_{\R^2} |\nabla u_0 |^2  dx+3\int_{\R^2} F(u_0 )dx
-\int_{\R^2} f (u_0 )u_0 dx\right\}.
\]
Since $\{u_n\}$ is uniformly bounded in $H^1(\R^2)$, saying it $\sup_{n\in \mathbb{N}^+}\|u_n\|_{H^1(R^2)}^2\leq \bar{K}$,
then one sees that $\|u_0\|_{H^1(R^2)}^2\leq \bar{K}$ by Fatou's lemma. Because $\alpha>0$ is arbitrary in \eqref{growth1},
we could choose it to satisfy $2\bar{K}\alpha<4\pi$. Taking the very similar calculations in \eqref{positive1}, for $q=\frac{2+\chi}{2}\geq3$, we obtain
\[
\int_{\R^2} f (u_0 )u_0 dx \leq \varepsilon a^2\bigg(\int_{\R^2} |\nabla u_0|^2 dx\bigg)^{\frac{\chi-2}{2}}
   +\bar{C}_\varepsilon a \bigg(\int_{\R^2} |\nabla u_0|^{2}dx\bigg)^{\frac{q-1}2}.
\]
Note that $\varepsilon=\frac1{2a^2}$ if $\chi=4$ and $\varepsilon=1$ if $\chi>4$,
then the constants $\bar{C}_{1/2a^2}$ and $\bar{C}_1$ are only dependent of $f$, $\chi$ and $\alpha$.
Let us define
\[
a^*\triangleq\left\{
      \begin{array}{ll}
        \frac1{2\bar{C}_{ 1/2a^2}}, &\text{if}~\chi=4, \\
         \min\left\{\frac1{2\bar{C}_1,\frac1{\sqrt{2}}}\right\}\varrho^{\frac{4-\chi}{2}}, &\text{if}~\chi>4,
      \end{array}
    \right.
\]
As a consequence, for all $a\in(0,a^*]$, we can infer that
\[
\lambda_0\geq\frac6{a^2}  \int_{\R^2} F(u_0 )dx>0.
\]
Finally, combining \eqref{2bounded1}, Lemma \ref{solution} and the calculations in Step 2, we have that
\begin{align*}
\lambda_0 |u_n-u_0 |_2^2 & =\lambda_0 |u_n|_2^2-\lambda_0 |u_0 |_2^2+o_n(1)
  =\lambda_n|u_n|_2^2-\lambda_0 |u_0 |_2^2+o_n(1)\\
&=\int_{\R^2} f(u_n )u_n dx -\int_{\R^2}[|\nabla u_n|^2+(A_1^2+A_2^2)u_n^2]dx
 -\int_{\R^2} f(u_0 )u_0 dx\\
 &\ \ \ \  +\int_{\R^2}[|\nabla u_0 |^2+(A_1^2+A_2^2)u_0 ^2]dx+o_n(1)\\
&=o_n(1)
\end{align*}
showing that $|u_0|_2^2=\lim\limits_{n\to\infty}|u_n|_2^2=a^2$ since $\lambda_0\neq0$. Thanks to Lemma \ref{mnifold},
we finish the proof.
\end{proof}

\section{The critical case}\label{Section4}

In this section, we concentrate on the proof of Theorem \ref{maintheorem2}. Since it is the critical exponential case,
it could be much complex than that of Theorem \ref{maintheorem1}.
Nevertheless, we can still find a Palais-Smale sequence $\{u_n\}\subset S(a)$ at the level $m(a)$.
Thereby, we can exploit the results in Section \ref{Section2} directly.

Alternatively, the nonlinearity $f$ has the critical exponential growth at infinity as well as there is no compact result
for $H^1(\R^2)\hookrightarrow L^p(\R^2)$ with $2<p<+\infty$, thus
 we have to pull the minimization value $m(a)$ defined in
\eqref{minimization} down to some threshold value which is crucial in restoring the compactness.
To the aim, motivated by \cite{AS},
for the sufficiently integer $n\geq2$, we require the constant $R_n\geq\frac{2+\log 2}{2(2-\log 2)}\rho$ to satisfy
\begin{equation}\label{Rn}
a^2  =\frac{\rho^2}{16\log n} \left( 2\log^22+2\log2+1-\frac8{n^2}\log n-\frac4{n^2}\right)
+\frac{(2R_n-\rho)(2R_n+3\rho)\log^22}{48\log n}.
\end{equation}
Here the constant $\rho>0$ is suitably large and meets that
\begin{equation}\label{rho}
\rho> \frac{1}{\alpha_0\bar{\beta}_0} \sqrt{\frac{\pi(\theta-2)}{\theta}}.
\end{equation}
Moreover, it is simple to observe that
\begin{equation}\label{Rn2}
\lim_{n\to\infty}R_n=+\infty,
 \lim_{n\to\infty}\frac{R_n}{\log n}=0 ~\text{and}\lim_{n\to\infty}\frac{R_n^2}{\log n}=\frac{12a^2}{\log^22}.
\end{equation}
By the above discussions, inspired by \cite{AY,DSD,Lu2012}, we define the following Moser type functions
\begin{equation}\label{wn}
 w_n(x)\triangleq \frac{1}{\sqrt{2\pi}}
\left\{
  \begin{array}{ll}
\displaystyle   \sqrt{\log n}, & 0\leq|x|\leq \frac \rho n, \\
\displaystyle      \frac{\log(\rho/|x|)}{\sqrt{\log n}}, &\frac \rho n\leq|x|\leq \frac \rho 2, \\
\displaystyle      \frac{2(R_n-|x|)\log 2}{(2R_n-\rho)\sqrt{\log n}}, & \frac\rho2\leq|x|\leq R_n, \\
\displaystyle     0, & |x|\geq R_n.
  \end{array}
\right.
\end{equation}
Clearly, $\{w_n\}\subset H^1(\R^2)$ and one can calculate standardly that
\begin{align*}
 \int_{\R^2}|w_n|^2dx&=\int_0^{\frac\rho n}  r\log n dr+\int_{\frac \rho n}^{\frac \rho 2} \frac{\log^2(\rho/r)}{\log n} rdr
+\int_{\frac\rho2}^{R_n}  \frac{4(R_n-r)^2\log^22}{(2R_n-\rho)^2\log n}rdr    \\
 &=\frac{\rho^2}{16\log n} \left( 2\log^22+2\log2+1-\frac8{n^2}\log n-\frac4{n^2}\right)
 +\frac{(2R_n-\rho)(2R_n+3\rho)\log^22}{48\log n}\\
&=a^2
\end{align*}
and
\begin{align*}
\displaystyle   \int_{\R^2}|\nabla w_n|^2dx &=\frac1{\log n}\int_{\frac\rho n}^{\frac\rho 2}\frac1rdr+\int_{\frac\rho2}^{R_n}
\frac{(2\log 2)^2}{(2R_n-\rho)^2\log n}rdr
   = 1-\frac{\log2}{\log n } + \frac{(2R_n+\rho)\log^22}{2(2R_n-\rho)\log n}\leq 1.
\end{align*}

Due to the negative impact caused by the Chern-Simons term in the energy functional $E$,
defined by \eqref{functional}, we shall follow the ideas exploited in \cite[Lemma 3.10]{SSY} to establish the following result.

\begin{lemma}\label{estimateChern-Simons}
Let $\{w_n\}\subset H^1(\R^2)$ be defined as \eqref{wn}, then, passing to a subsequence if necessary,
it holds that
\[
\lim_{n\to\infty}\int_{\R^2} (A_1^2[w_n]+A_2^2[w_n])w_n^2 dx=0.
\]
\end{lemma}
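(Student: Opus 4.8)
The plan is to use the radial symmetry of $w_n$ to collapse the Chern--Simons term into a one-dimensional integral and then estimate it over the three branches of \eqref{wn}. Since $w_n$ is radial, the Biot--Savart representations \eqref{CSS2e1}--\eqref{CSS2e2} reduce, exactly as in the Byeon--Huh--Seok reduction \eqref{BHS}, to
\[
A_1^2[w_n](x)+A_2^2[w_n](x)=\frac{h_n^2(|x|)}{|x|^2},\qquad h_n(s)\triangleq\frac12\int_0^s r\,w_n^2(r)\,dr,
\]
where I write $w_n(r)$ for the radial profile of $w_n$. Passing to polar coordinates,
\[
\int_{\R^2}\big(A_1^2[w_n]+A_2^2[w_n]\big)w_n^2\,dx=2\pi\int_0^{R_n}\frac{h_n^2(s)}{s}\,w_n^2(s)\,ds=:I_n,
\]
so it suffices to prove $I_n\to0$. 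Two elementary facts will drive the estimates: $h_n$ is nondecreasing with $h_n(s)\le h_n(\infty)=\frac{a^2}{4\pi}$ for all $s$ (because $|w_n|_2^2=a^2$), and, since $w_n^2\le\frac{\log n}{2\pi}$, one also has the crude bound $h_n(s)\le\frac{s^2\log n}{8\pi}$.

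Splitting $I_n$ along the three branches $[0,\rho/n]$, $[\rho/n,\rho/2]$, $[\rho/2,R_n]$, the first and the last are routine. On $[0,\rho/n]$ the profile is constant and $h_n(s)=\frac{s^2\log n}{8\pi}$ exactly, so the integrand is $O(s^3\log^3 n)$ and this piece is $O(n^{-4}\log^3 n)\to0$. On $[\rho/2,R_n]$ I would use $h_n\le\frac{a^2}{4\pi}$ together with $\frac1s\le\frac2\rho$ and evaluate $\int_{\rho/2}^{R_n}w_n^2(s)\,ds$ explicitly from \eqref{wn}; this produces a contribution of order $\frac{2R_n-\rho}{\log n}$, which tends to $0$ by the second limit in \eqref{Rn2}.

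The delicate region --- and the main obstacle --- is the logarithmic annulus $[\rho/n,\rho/2]$, where the crude bound $h_n\le\frac{a^2}{4\pi}$ is useless: it leaves $\frac{1}{\log n}\int_{\rho/n}^{\rho/2}\frac{\log^2(\rho/s)}{s}\,ds$, which is of order $\log^2 n$ and diverges. The resolution is that near the origin $h_n(s)$ is far smaller than its total mass, and this must be quantified. Substituting $r=\rho e^{-t}$ in $h_n$ and using $\int_T^{\infty}t^2e^{-2t}\,dt=e^{-2T}\big(\tfrac{T^2}{2}+\tfrac{T}{2}+\tfrac14\big)$ with $T=\log(\rho/s)$, I obtain the sharp estimate
\[
h_n(s)\le \frac{C}{\log n}\,s^2\log^2(\rho/s),\qquad s\in[\rho/n,\rho/2],
\]
for an absolute constant $C$ (the tiny contribution of $[0,\rho/n]$ to $h_n$ is absorbed here since $s\ge\rho/n$). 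Inserting this bound and substituting $r=\rho e^{-t}$ once more converts this piece of $I_n$ into $\frac{C^2}{\log^3 n}\int_{\log 2}^{\log n}\rho^4\,t^6 e^{-4t}\,dt$, whose $t$-integral is dominated by the convergent $\int_0^{\infty}t^6e^{-4t}\,dt$; hence it is $O(\log^{-3}n)\to0$. Adding the three contributions yields $I_n\to0$, which is the claim. The only genuine work is the sharp annulus bound: once the extra factor $s^2\log^2(\rho/s)/\log n$ of decay in $h_n$ is isolated, the remaining integrations are routine exponential substitutions.
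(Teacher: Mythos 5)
Your proof is correct, but it takes a genuinely different route from the paper's. The paper never touches the radial representation of the gauge fields: it computes $\int_{\R^2}|w_n|^4dx$ explicitly on the three branches of \eqref{wn}, concludes $|w_n|_4\to0$, interpolates against the fixed mass $|w_n|_2^2=a^2$ to get $|w_n|_p\to0$ for all $2<p\le4$, and then disposes of the Chern--Simons term through the Hardy--Littlewood--Sobolev chain in \eqref{gauge1}, choosing $r$ so that both $2r$ and $r/(r-1)$ lie in $(2,4]$ (e.g. $r=3/2$, which yields $\int_{\R^2}(A_1^2+A_2^2)w_n^2\,dx\leq C|w_n|_3^6\to0$). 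You instead exploit the radial symmetry of $w_n$ to collapse the term to the one-dimensional integral $2\pi\int_0^{R_n}h_n^2(s)s^{-1}w_n^2(s)\,ds$ and estimate it branch by branch; the one nontrivial ingredient is your sharpened bound $h_n(s)\leq Cs^2\log^2(\rho/s)/\log n$ on the logarithmic annulus, which I checked and which does hold --- the $[0,\rho/n]$ contribution to $h_n$ is indeed absorbed there, since $s^2\log^2(\rho/s)\geq \rho^2 n^{-2}\log^2n$ on $[\rho/n,\rho/2]$ for large $n$, and your identification of where the crude bound $h_n\leq a^2/(4\pi)$ fails is exactly right. The paper's route is shorter once \eqref{gauge1} is available and uses nothing about radiality, so it would survive a non-radial perturbation of the Moser functions; yours is self-contained, bypasses the convolution estimates entirely, and makes visible precisely where the smallness comes from (the extra decay of $h_n$ near the origin, plus \eqref{Rn2} on the outer branch). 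Both arguments are valid, and in neither is the extraction of a subsequence actually needed.
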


\begin{proof}
Recalling the definition of $\{w_n\}$, we are able to see that
\[
\int_{|x|\leq\frac\rho n}|w_n|^4dx=\frac{\log^2n}{(2\pi)^2}\int_{|x|\leq\frac\rho n} dx=\frac{\rho^2\log^2n}{4\pi n^2}
=o_n(1),
\]
and
\begin{align*}
 \int_{\frac\rho n\leq |x|\leq\frac\rho 2}|w_n|^4dx &=\frac{1}{(2\pi)^2\log^2n}\int_{\frac\rho n\leq |x|\leq\frac\rho 2}
\log^4(\rho/|x|)dx= \frac{1}{ 2\pi \log^2n}\int_{\frac\rho n}^{\frac\rho2}\log^4(\rho/r)rdr \\
    & =\frac{\rho^2}{8\pi \log^2n}(2r^4-4r^3+6r^2-6r+4)e^{2r}\bigg|_{\log \frac1n}^{\log \frac12}=o_n(1).
\end{align*}
In view of \eqref{Rn2}, there holds
\[
\begin{gathered}
 \int_{\frac\rho 2\leq |x|\leq R_n}|w_n|^4dx  =\frac{16\log^42}{(2\pi)^2\log^2n}\int_{\frac\rho 2\leq |x|\leq R_n}
 \frac{(R_n-|x|)^4}{(2R_n-\rho)^4}dx= \frac{8\log^42}{ \pi \log^2n}\int_{\frac\rho 2}^{R_n}\frac{(R_n-r)^4}{(2R_n-\rho)^4}rdr \hfill\\
\ \ \ \  =\frac{4\log^42}{ 15\pi \log^2n}\big[15(r-R_n)^4r^2-20(r-R_n)^3r^3+15(r-R_n)^2r^4-6(r-R_n) r^5+r^6\big]\bigg|_{\frac\rho 2}^{R_n}\hfill\\
\ \ \ \  =\frac{4\log^42}{15\pi(2R_n-\rho)^4 \log^2n}\bigg[R_n^6-15\bigg(\frac\rho2-R_n\bigg)^4\bigg(\frac\rho2\bigg)^2+20\bigg(\frac\rho2-R_n\bigg)^3
\bigg(\frac\rho2\bigg)^3\hfill\\
\ \ \ \ \ \ \ \ \ \ \ \  \ \ \ \  \ \ \ \  \ \  \ \  \ \  \ \  \ \  \ \  \ \ \ \   -15\bigg(\frac\rho2-R_n\bigg)^2
\bigg(\frac\rho2\bigg)^4+6\bigg(\frac\rho2-R_n\bigg)\bigg(\frac\rho2\bigg)^5-\bigg(\frac\rho2\bigg)^6\bigg] \hfill\\
\ \ \ \  =\frac{4\log^42}{15\pi (2R_n-\rho)^4\log^2n}\big[R_n^6+o_n(1)\big]=o_n(1). \hfill\\
\end{gathered}
\]
As a consequence of the above calculations, we derive $|w_n|_4^4\to$ as $n\to\infty$.
Since $|w_n|_2^2=a^2$, one will see that $|w_n|_p^p\to0$ for each $2<p\leq 4$ by the interpolation inequality.
Due to \eqref{gauge1}, we can finish the proof of this lemma.
\end{proof}

Then, with \eqref{estimateChern-Simons} in hands, it permits us to take
 the estimate for the minimization value $m(a)$ as follows.

\begin{lemma}\label{estimate}
Assume that $f$ satisfies
\eqref{definitioncriticalgrowth} and $(f_1)-(f_5)$, then there holds
\begin{equation}\label{estimate1}
m(a)<c^*\triangleq\frac{2\pi}{\alpha_0}.
\end{equation}
\end{lemma}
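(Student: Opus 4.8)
The plan is to exploit the characterization $m(a)=\inf_{u\in S(a)}\max_{t>0}E(u_t)$ obtained in the proof of Lemma \ref{2PSsequence}, so that $m(a)\le \max_{t>0}E((w_n)_t)$ for every $n$, where $w_n$ are the Moser-type functions from \eqref{wn}. Since $|w_n|_2^2=a^2$, $|\nabla w_n|_2^2\le 1$ and, by Lemma \ref{estimateChern-Simons}, $c_n:=\int_{\R^2}(A_1^2[w_n]+A_2^2[w_n])w_n^2\,dx\to 0$, it suffices to produce one large $n$ with $\max_{t>0}E((w_n)_t)<\frac{2\pi}{\alpha_0}$. By Lemma \ref{unique} this maximum is attained at the unique $t_n>0$ with $(w_n)_{t_n}\in\mathcal{M}(a)$, and the scaling formula for $\varsigma(t)=E((w_n)_t)$ gives
\[
E((w_n)_{t_n})=\frac{t_n^2}{2}\big(|\nabla w_n|_2^2+c_n\big)-t_n^{-2}\int_{\R^2}F(t_nw_n)\,dx.
\]

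I would argue by contradiction, supposing $E((w_n)_{t_n})\ge \frac{2\pi}{\alpha_0}$ for all large $n$. Dropping the nonnegative term $\int F(t_nw_n)$ first yields $\frac{t_n^2}{2}(|\nabla w_n|_2^2+c_n)\ge \frac{2\pi}{\alpha_0}$, whence $\liminf_n t_n^2\ge \frac{4\pi}{\alpha_0}$; in particular $t_n$ is bounded below and the peak value $t_n\sqrt{\log n/2\pi}$ of $t_nw_n$ on $\{|x|\le\rho/n\}$ tends to $+\infty$. Next, from $J((w_n)_{t_n})=0$ together with $(f_2)$ (so $f(s)s-2F(s)\ge(\theta-2)F(s)$) and the scaling, I obtain the key inequality
\[
t_n^4\big(|\nabla w_n|_2^2+c_n\big)\ge (\theta-2)\int_{\R^2}F(t_nw_n)\,dx\ge (\theta-2)\,\frac{\pi\rho^2}{n^2}\,F\!\Big(t_n\sqrt{\tfrac{\log n}{2\pi}}\Big),
\]
and then $(f_5)$ supplies, for $n$ large and any fixed $\beta<\beta_0$, the bound $F(t_n\sqrt{\log n/2\pi})\ge \beta\,n^{\alpha_0t_n^2/2\pi}$, so that
\[
t_n^4\big(|\nabla w_n|_2^2+c_n\big)\ge (\theta-2)\pi\rho^2\beta\, n^{\frac{\alpha_0t_n^2}{2\pi}-2}.
\]

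To finish I would first rule out $t_n\to+\infty$: taking logarithms above, the right-hand side grows like $\big(\frac{\alpha_0t_n^2}{2\pi}-2\big)\log n$ while the left-hand side grows only like $4\log t_n$, which is impossible once $t_n^2$ is large. Thus $t_n$ is bounded and, along a subsequence, $t_n^2\to L^2\ge \frac{4\pi}{\alpha_0}$. If $L^2>\frac{4\pi}{\alpha_0}$ the exponent stays bounded away from $0$, so the right-hand side blows up while the left-hand side stays bounded, a contradiction. The delicate borderline is $L^2=\frac{4\pi}{\alpha_0}$: here I would use the expansion $|\nabla w_n|_2^2=1+O(1/\log n)$ from the computation preceding the lemma together with $c_n=O(1/\log n)$ (which follows from the $L^4$-decay in the proof of Lemma \ref{estimateChern-Simons} via \eqref{gauge1}) to deduce that $\big(t_n^2-\tfrac{4\pi}{\alpha_0}\big)\log n$ is bounded below, hence $n^{\frac{\alpha_0t_n^2}{2\pi}-2}$ is bounded below by a positive constant. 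Then the left-hand side tends to $16\pi^2/\alpha_0^2$, while the right-hand side is at least a fixed positive multiple of $\rho^2$; choosing $\rho$ as large as required in \eqref{rho} renders this impossible, forcing $m(a)<\frac{2\pi}{\alpha_0}$.

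The main obstacle is exactly this borderline case $t_n^2\to 4\pi/\alpha_0$, in which the gradient contribution and the exponential gain from $(f_5)$ are of the same order. Handling it requires both the sharp $1/\log n$-expansions of $|\nabla w_n|_2^2$ and of the Chern--Simons remainder $c_n$, and the quantitative smallness encoded in the large-$\rho$ condition \eqref{rho}, which is precisely what converts the threshold estimate into the strict inequality $m(a)<2\pi/\alpha_0$.
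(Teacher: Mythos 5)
Your proposal follows essentially the same route as the paper's proof: the characterization $m(a)\le\inf_{u\in S(a)}\max_{t>0}E(u_t)$, the Moser functions $w_n$, the contradiction hypothesis $\max_{t>0}E((w_n)_t)\ge c^*$, the two inequalities coming from $J((w_n)_{t_n})=0$ with $(f_2)$, $(f_5)$ and from $E((w_n)_{t_n})\ge c^*$, the exclusion of $t_n\to+\infty$ by taking logarithms, the identification $t_n^2\to 2c^*$, and the final contradiction with \eqref{rho} via Lemma \ref{estimateChern-Simons}. If anything, your treatment of the borderline case $t_n^2\to 4\pi/\alpha_0$ — showing $\bigl[\alpha_0 t_n^2(2\pi)^{-1}-2\bigr]\log n$ is bounded below using the $O(1/\log n)$ decay of the Chern--Simons term — is more careful than the paper's, which passes to the limit in \eqref{estimate4} without justifying that the exponential factor does not degenerate when $t_n^2$ approaches $2c^*$ from below.
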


\begin{proof}
Due to the definition of $m(a)$, it is obvious to conclude that $m(a)\leq \inf\limits_{u\in S_a}\max\limits_{t>0}E(u_t)$.
Since $w_n\in S(a)$ for all integer $n\geq2$, then it suffices to show that
$\max\limits_{t>0}E((w_{n_0})_t)<c^*$ for some integer $n_0\geq2$.
Suppose it by a contradiction, we assume that $\max\limits_{t>0}E((w_{n})_t)\geq c^*$ for all integer $n\geq2$.
It follows from some direct calculations in Lemma \ref{unique} that there is a $t_n>0$ such that
\[
J((w_{n})_{t_n})=0~\text{and}~E((w_{n})_{t_n})=\max\limits_{t>0}E((w_{n})_t)\geq c^*.
\]
Combining $|\nabla w_n|_2^2\leq1$ and $J((w_{n})_{t_n})=0$ jointly with
$(f_2)$, one has that
\begin{equation}\label{estimate2}
t_{n}^{4} +t_{n}^{4}\int_{\R^2} (A_1^2[w_n]+A_2^2[w_n])w_n^2 dx\geq  \frac{\theta-2}{\theta} \int_{\mathbb{R}^{2}}
F\left(t_{n} w_{n}\right)  dx.
\end{equation}
Since $F(s)\geq0$ for all $s\in\R$, using $|\nabla w_n|_2^2\leq1$ again and $E((w_{n})_{t_n})\geq c^*$, there holds
\begin{equation}\label{estimate3}
t_n^{2}+t_n^{2} \int_{\R^2} (A_1^2[w_n]+A_2^2[w_n])w_n^2 dx\geq 2c^*,~\text{for all integer}~n\geq2.
\end{equation}
Invoking from $\left(f_{5}\right)$, for all $\varepsilon \in\left(0, \beta_{0}\right)$, there is
a constant $R_{\varepsilon}>0$ such that
$$
F(s)   \geq  \left(\beta_{0}-\varepsilon\right)  e^{\alpha_{0} s^{2}}, \forall s \geq R_{\varepsilon},
$$
which together with \eqref{estimate2} implies that
\begin{equation}\label{estimate4}
\begin{gathered}
t_{n}^{4} +t_{n}^{4}\int_{\R^2} (A_1^2[w_n]+A_2^2[w_n])w_n^2 dx
\geq  \frac{\theta-2}{\theta} F\left(\frac{t_{n} \sqrt{\log n}}{\sqrt{2 \pi}  }\right)  \int_{B_{\frac\rho n}(0)}
d x \hfill\\
\ \ \ \   \geq \frac{\left(\theta-2\right) \left(\bar{\beta}_{0}-\varepsilon\right)}{\theta}
\left(e^{\alpha_{0} t_{n}^{2} (2\pi)^{-1} \log n}\right)
 \left|B_{\frac\rho n}(0)\right|
 =\frac{\pi(\theta-2)\left(\bar{\beta}_{0}-\varepsilon\right)\rho^2}{\theta}
e^{\left[\alpha_{0} t_{n}^{2} (2\pi)^{-1} -2\right]
\log n} .\hfill\\
\end{gathered}
\end{equation}
So, with the help of \eqref{gauge1}, we obtain
\begin{equation}\label{estimate5}
4\log t_n  +\log[\bar{C}_r(a^2+2)] \geq \log\bigg[\frac{\pi(\theta-2)\left(\bar{\beta}_{0}-\varepsilon\right)\rho^2}{\theta}
\bigg]
+[\alpha_0t_n^2(2\pi) ^{-1}  -2]\log n.
\end{equation}
If $\{t_n\}$ is unbounded, up to a subsequence if necessary, we can assume that
$t_n\to+\infty$ and then
\[
\frac{4\log t_n+ \log[\bar{C}_r(a^2+2)]}{t_n^2}  \geq t_n^{-2}\log\bigg[\frac{\pi(\theta-2)\left(\bar{\beta}_{0}-\varepsilon\right)\rho^2}{\theta}
\bigg]
+[\alpha_0  (2\pi) ^{-1} -2t_n^{-2} ]\log n
\]
which yields a contradiction
if we tend $n\to\infty$. Thereby, passing to a subsequence if necessary, there exists a
positive constant $t_0^{2}\geq 2c^*$ determined by Lemma \ref{estimateChern-Simons} and \eqref{estimate3} such that
\[
\lim_{n\to\infty} t_n^{2}=t_0^{2}\geq 2c^*.
\]
 Moreover, we derive
$t_0^{2}= 2c^*$. Otherwise, we arrive at a contradiction by letting
$n\to\infty$ in \eqref{estimate5}. Let us tend $n\to\infty$ and then $\varepsilon\to0^+$ in \eqref{estimate4},
using Lemma \ref{estimateChern-Simons} again, there holds
\[
(2c^*)^2=t_0^{4}\geq  \frac{\pi(\theta-2)\bar{\beta}_{0}\rho^2}{\theta}
\]
which contradicts with \eqref{rho}.
The proof is completed.
\end{proof}

To deal with the lack of compactness, we also need the following type of compact result.

 \begin{lemma}\label{2compact}
Let $f$ meets
\eqref{definitioncriticalgrowth} and $(f_1)-(f_4)$. If the sequence $\{u_n\}\subset H^1(\R^2)$ satisfying
$u_n\to 0$ in $L^p(\R^2)$ for all $2<p<+\infty$, $u_n\to 0$ a.e. in $\R^2$
and
\begin{equation}\label{2compact1}
\sup_{n\in \mathbb{N}}\int_{\R^2} f(u_n)u_ndx\leq C,
\end{equation}
then we have that $F(u_n)\to 0$ in $L^1(\R^2)$ as $n\to\infty$ along a subsequence.
\end{lemma}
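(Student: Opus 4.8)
The plan is to prove $\int_{\R^2}F(u_n)\,dx\to0$ by decomposing $\R^2$ according to the size of $u_n$ and handling each regime with a different tool. Since $f\equiv0$ on $(-\infty,0]$, and hence $F\equiv0$ there, it suffices to control the integral over $\{u_n>0\}$, which I split into three pieces: the small-value region $\{0<u_n\le s_0\}$, the intermediate region $\{s_0<u_n\le L\}$, and the large-value region $\{u_n>L\}$, where $s_0$ is the constant from $(f_4)$ and $L>s_0$ is a threshold to be fixed. Fix $\epsilon>0$; the goal is to bound each piece by $\epsilon/3$.

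First I would dispose of the large-value region, which is precisely where the critical exponential growth is dangerous. Here the assumption $(f_4)$ is decisive: for $s\ge s_0$ it gives $F(s)\le M_0 f(s)/s^\vartheta=M_0 f(s)s/s^{1+\vartheta}$, so on $\{u_n>L\}$ one has $F(u_n)\le (M_0/L^{1+\vartheta})\,f(u_n)u_n$, where the integrand is nonnegative by $(f_2)$. Integrating and invoking the uniform bound \eqref{2compact1} yields
\[
\int_{\{u_n>L\}}F(u_n)\,dx\le \frac{M_0}{L^{1+\vartheta}}\int_{\R^2}f(u_n)u_n\,dx\le \frac{M_0 C}{L^{1+\vartheta}},
\]
which, since $1+\vartheta>0$, can be made smaller than $\epsilon/3$ by choosing $L$ large, uniformly in $n$.

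With $L$ now fixed, I turn to the intermediate region. Because $f$ is continuous, $F$ is bounded on the compact interval $[s_0,L]$ by some constant $C_L$; meanwhile Chebyshev's inequality together with $u_n\to0$ in $L^p(\R^2)$ for a fixed $p>2$ gives $|\{u_n>s_0\}|\le s_0^{-p}\int_{\R^2}|u_n|^p\,dx\to0$. Hence $\int_{\{s_0<u_n\le L\}}F(u_n)\,dx\le C_L\,|\{u_n>s_0\}|\to0$, so this piece is below $\epsilon/3$ for $n$ large. Finally, on the small-value region $u_n$ is bounded by $s_0$, so $e^{\alpha u_n^2}-1\le C\,u_n^2$ there, and the growth estimate \eqref{growth4} yields $F(u_n)\le C(|u_n|^\chi+|u_n|^{q+2})$; since $\chi\ge4>2$ and $q+2>2$, the strong $L^p$-convergence $u_n\to0$ forces $\int_{\{0<u_n\le s_0\}}F(u_n)\,dx\to0$. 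Adding the three contributions shows $\int_{\R^2}F(u_n)\,dx<\epsilon$ for $n$ large, that is, $F(u_n)\to0$ in $L^1(\R^2)$.

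I expect the only genuine obstacle to be the large-value region, where the exponential growth precludes any naive polynomial bound; the crux is that $(f_4)$ converts $F(u_n)$ into $f(u_n)u_n$ with a decaying prefactor $L^{-(1+\vartheta)}$, thereby trading the uncontrolled growth for the a priori bound \eqref{2compact1}. The remaining two regions are routine once one observes that the intermediate range is supported on a set of vanishing measure on which $F$ is bounded, while the small range is governed by superquadratic polynomial control.
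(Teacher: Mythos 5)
Your proof is correct and complete. The paper itself omits the argument, declaring it ``standard'' with a pointer to de Figueiredo--Miyagaki--Ruf, and your three-region decomposition --- with $(f_4)$ converting $F(u_n)$ into $L^{-(1+\vartheta)}f(u_n)u_n$ on the large set, Chebyshev plus boundedness of $F$ on $[s_0,L]$ on the intermediate set, and the polynomial bound from \eqref{growth4} on the small set --- is exactly that standard argument, so you have in effect supplied the proof the authors chose not to write out.
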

\begin{proof}
The proof is standard and its detail is omitted, we refer the reader to \cite{Figueiredo} for example.
\end{proof}

Note that the map $a\mapsto m(a)$ is non-increasing by Lemma \ref{non-increasing}, whereas
taking into account that the nonlinearity $f$ admits the critical exponential growth as the infinity,
we are going to prove the following result which is obviously stronger.

\begin{lemma}\label{decreasing}
Let $\tilde{a}\neq0$ and $\tilde{u}\in \mathcal{M}(\tilde{a})$ be a minimizer of $m(\tilde{a})$.
If there is a $\tilde{\lambda}>0$ such that $\tilde{u}$ is a solution of Eq. \eqref{mainequation1} with $\lambda=\tilde{\lambda}$,
then $m(\tilde{a})$ is strictly decreasing in the right neighborhood of $\tilde{a}$.
\end{lemma}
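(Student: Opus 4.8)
The plan is to exploit a two-parameter scaling obtained by combining the scalar dilation $\sigma_s:=s\tilde u$, which raises the mass to $|\sigma_s|_2^2=s^2\tilde a^2$, with the mass-preserving fiber scaling $w\mapsto w_t=tw(t\cdot)$ used throughout Section \ref{Section2}. Assuming without loss of generality that $\tilde a>0$ (the value $m$ depends only on $a^2$), for $s>1$ one has $\sigma_s\in S(s\tilde a)$ with $s\tilde a>\tilde a$, so by the characterization $m(s\tilde a)=\inf_{w\in S(s\tilde a)}\max_{t>0}E(w_t)$ established in the proof of Lemma \ref{2PSsequence} it suffices to produce a single competitor of small maximal energy. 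First I would record the elementary scaling identities: using the explicit representations \eqref{CSS2e1}--\eqref{CSS2e2} one checks that $A_j[s\tilde u]=s^2A_j[\tilde u]$ and $A_j[w_t](x)=t\,A_j[w](tx)$, whence a change of variables yields
\[
E\big((\sigma_s)_t\big)=\frac{t^2}{2}\big(s^2\mathcal N+s^6\mathcal C\big)-t^{-2}\int_{\R^2}F(ts\,\tilde u)\,dx=:\Phi(s,t),
\]
where $\mathcal N=\int_{\R^2}|\nabla\tilde u|^2dx$ and $\mathcal C=\int_{\R^2}(A_1^2+A_2^2)[\tilde u]\tilde u^2dx$. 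Thus $g(s):=\max_{t>0}\Phi(s,t)\ge m(s\tilde a)$, while $g(1)=E(\tilde u)=m(\tilde a)$, since $\tilde u\in\mathcal M(\tilde a)$ forces the maximizer to be $t=1$ by Lemma \ref{unique}.

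The crux is to show that $g$ strictly decreases to the right of $s=1$, and the sign comes from the equation solved by $\tilde u$. Differentiating in $s$,
\[
\partial_s\Phi(s,t)=t^2\big(s\mathcal N+3s^5\mathcal C\big)-t^{-1}\int_{\R^2}f(ts\,\tilde u)\,\tilde u\,dx,
\]
so that $\partial_s\Phi(1,1)=\mathcal N+3\mathcal C-\int_{\R^2}f(\tilde u)\tilde u\,dx$. Testing Eq. \eqref{mainequation1} against $\tilde u$, i.e. using the identity \eqref{Nehari} with $\lambda=\tilde\lambda$, gives $\mathcal N+3\mathcal C-\int_{\R^2}f(\tilde u)\tilde u\,dx=-\tilde\lambda\tilde a^2$, hence
\[
\partial_s\Phi(1,1)=-\tilde\lambda\,\tilde a^2<0,
\]
which is exactly where the hypothesis $\tilde\lambda>0$ enters. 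Note that the coefficient $3\mathcal C$ produced by the combined dilation matches the factor $3(A_1^2+A_2^2)$ occurring in \eqref{Nehari}, so the Chern-Simons contributions cancel consistently.

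To turn this infinitesimal information into a genuine strict inequality I would argue as follows. For each $s$ near $1$, Lemma \ref{unique} (whose proof is insensitive to the value of the mass) yields a unique maximizer $t^*(s)=t_{\sigma_s}>0$ of $t\mapsto\Phi(s,t)$, depending continuously on $s$ with $t^*(1)=1$. Using $g(1)\ge\Phi(1,t^*(s))$ one obtains the one-sided estimate
\[
g(s)-g(1)\le\Phi(s,t^*(s))-\Phi(1,t^*(s))=\int_1^s\partial_\sigma\Phi(\sigma,t^*(s))\,d\sigma .
\]
Since $\partial_\sigma\Phi$ is continuous and strictly negative at $(1,1)$, and $t^*(s)\to1$ as $s\to1^+$, the integrand is strictly negative for all $\sigma\in[1,s]$ once $s>1$ is close enough to $1$; therefore $g(s)<g(1)$. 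Combining with $m(s\tilde a)\le g(s)$ and $g(1)=m(\tilde a)$ gives $m(s\tilde a)<m(\tilde a)$ for every $s>1$ in a small right neighborhood of $1$, that is, $m(a)<m(\tilde a)$ for all $a$ in a right neighborhood of $\tilde a$; together with the monotonicity of Lemma \ref{non-increasing} this is the asserted strict decrease.

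I expect the main obstacle to be the rigorous handling of the $\max_{t>0}$ operation: one must control the maximizer $t^*(s)$ and keep it close to $1$ so that the negative sign of $\partial_\sigma\Phi$ can be propagated along the whole segment $[1,s]$, rather than merely invoking the envelope (Danskin) formula formally. A secondary technical point, specific to this system and absent in the pure Schr\"odinger setting, is the correct bookkeeping of the Chern-Simons term, which scales as $s^6t^2$ under the combined dilation and is responsible for the coefficient $3\mathcal C$ in $\partial_s\Phi$.
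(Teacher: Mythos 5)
Your proposal is correct, and it reaches the conclusion by a genuinely different mechanism than the paper, although the underlying deformation is the same. The paper also works with the two\-/parameter family $\gamma t\tilde u(t\cdot)$ and computes the same key sign, $\partial_\gamma\Phi_E(1,1)=-\tilde\lambda\tilde a^2<0$ from \eqref{Nehari} and \eqref{gauge0} (your bookkeeping $s^2\mathcal N+s^6\mathcal C$ and the resulting coefficient $3\mathcal C$ agree exactly with the paper's $\Phi_E(t,\gamma)$). Where you diverge is in converting this infinitesimal information into the strict inequality $m(s\tilde a)<m(\tilde a)$: the paper stays on the constraint set by applying the implicit function theorem to $\Phi_J(t,\gamma)=J(\gamma t\tilde u(t\cdot))$, which requires two additional sign computations, $\partial_t^2\Phi_E(1,1)<0$ and $\partial_t\Phi_J(1,1)<0$, both resting on the inequality $f'(s)s^2-5f(s)s+8F(s)>0$ of Lemma A.3, and then evaluates $\Phi_E$ at the implicitly defined point $(g(1+\varepsilon),1+\varepsilon)\in\mathcal M((1+\varepsilon)^2\tilde a)$. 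You instead use the fiber\-/maximum characterization $m(s\tilde a)\le\max_{t>0}E((\sigma_s)_t)$ (which follows directly from Lemma \ref{unique}) together with the one\-/sided envelope estimate $g(s)-g(1)\le\int_1^s\partial_\sigma\Phi(\sigma,t^*(s))\,d\sigma$, propagating the negative sign by joint continuity of $\partial_\sigma\Phi$ and continuity of the maximizer $t^*(s)$ at $s=1$. This buys you a proof that avoids second derivatives, Lemma A.3, and the implicit function theorem altogether, at the modest price of having to note that Lemma \ref{unique} (existence, uniqueness and continuity of $t_u$) applies verbatim to the mass\-/varying family $s\tilde u$; the two remaining technical points you flag (differentiation under the integral for $\sigma\mapsto\int F(t\sigma\tilde u)$ and joint continuity of $\partial_\sigma\Phi$) are handled by the same Trudinger--Moser domination the paper already uses elsewhere. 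Both arguments deliver exactly the statement used in Step III of the proof of Theorem \ref{maintheorem2}.
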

\begin{proof}
We follow the method exploited in \cite[Proposition 5.1]{GZ}
to exhibit the proof. For each $t,\gamma>0$, define $\tilde{u}_{t,\gamma}\triangleq\gamma t\tilde{u}(t\cdot)$.
With the help of the two functionals $E,J:S(a)\to\R$ before, we define
the following two new functionals
$\Phi_E(t,\gamma)\triangleq E(\tilde{u}_{t,\gamma})$ and $\Phi_J(t,\gamma)\triangleq J(\tilde{u}_{t,\gamma})$
which are given by
\[
\Phi_E(t,\gamma)=\frac{t^2}2\int_{\R^2}[ \gamma^2|\nabla \tilde{u}|^2+
 \gamma^6 (A_1^2[\tilde{u}]+A_2^2[\tilde{u}])|\tilde{u}|^2]dx-
t^{-2}\int_{\R^2}F(\gamma t\tilde{u})dx
\]
and
\[
\Phi_J(t,\gamma)= t^2 \int_{\R^2}[ \gamma^2|\nabla \tilde{u}|^2+
 \gamma^6 (A_1^2[\tilde{u}]+A_2^2[\tilde{u}])|\tilde{u}|^2]dx-
t^{-2}\int_{\R^2}[f(\gamma t\tilde{u})\gamma t\tilde{u}-2F(\gamma t\tilde{u})]dx.
\]
Since $\tilde{u}\in \mathcal{M}(\tilde{a})$ which shows that $J(\tilde{u})=0$, then we have that
$\frac{\partial\Phi_E}{\partial t}(1,1)=J(\tilde{u})=0$ and
\begin{align*}
 \frac{\partial\Phi_E}{\partial \gamma}(1,1)&=\int_{\R^2} |\nabla \tilde{u}|^2dx+3\int_{\R^2}
  (A_1^2[\tilde{u}]+A_2^2[\tilde{u}])|\tilde{u}|^2dx-
 \int_{\R^2}f( \tilde{u})\tilde{u}dx   \\
   & = -\tilde{\lambda}\int_{\R^2}|\tilde{u}|^2dx=-\tilde{\lambda} \tilde{a}^2<0,
\end{align*}
where we choose $\tilde{u}$ as a text function of Eq. \eqref{mainequation1} with
$u=\tilde{u}$ and $\lambda=\tilde{\lambda}$ as well as
  \eqref{gauge0}.

Moreover, combining $J(\tilde{u})=0$ and Lemma A.3 in the Appendix below,
 we can compute that
\begin{align*}
 \frac{\partial^2\Phi_E}{\partial t^2}(1,1)&=\int_{\R^2}[ |\nabla \tilde{u}|^2dx+
  (A_1^2[\tilde{u}]+A_2^2[\tilde{u}])|\tilde{u}|^2]dx-
 \int_{\R^2}[f^\prime( \tilde{u})\tilde{u}^2-4f( \tilde{u})\tilde{u}+6F(\tilde{u})]dx   \\
   & = - \int_{\R^2}[f^\prime( \tilde{u})\tilde{u}^2-5f( \tilde{u})\tilde{u}+8F(\tilde{u})]dx<0.
\end{align*}
Thanks to the above three facts, for all  $|\iota_t|>0$ small enough and $\iota_\gamma>0$, there holds
\begin{equation}\label{decreasing1}
E(\tilde{u}_{1+\iota_t,1+\iota_\gamma})=  \Phi_E(1+\iota_t,1+\iota_\gamma)<\Phi_E(1,1)=E(\tilde{u}).
\end{equation}
According to $J(\tilde{u})=0$ again, one sees that $\Phi_J(1,1)=0$. Again
adopting Lemma A.3 in the Appendix below and $J(\tilde{u})=0$, there holds
\begin{align*}
  \frac{\partial\Phi_J}{\partial t}(1,1)  & =2\int_{\R^2}[  |\nabla \tilde{u}|^2+
 (A_1^2[\tilde{u}]+A_2^2[\tilde{u}])|\tilde{u}|^2]dx-
\int_{\R^2}[f^\prime( \tilde{u})\tilde{u}^2-3f( \tilde{u})\tilde{u}+4F(\tilde{u})]dx   \\
    & = - \int_{\R^2}[f^\prime( \tilde{u})\tilde{u}^2-5f( \tilde{u})\tilde{u}+8F(\tilde{u})]dx<0.
\end{align*}
It therefore concludes from the implicit function theorem that there exist a constant $\varepsilon>0$
and a continuous function $g:[1-\varepsilon,1+\varepsilon]\to \R$ satisfying $g(1)=1$
such that $\Phi_J(g(\gamma),\gamma)=0$ for every $\gamma\in[1-\varepsilon,1+\varepsilon]$.
Particularly, there holds $\tilde{u}_{g(1+\varepsilon),1+\varepsilon}\in \mathcal{M}((1+\varepsilon)^2\tilde{a})$.
Consequently, we infer from \eqref{decreasing1} that
\[
m((1+\varepsilon)^2\tilde{a})\leq \inf_{u\in \mathcal{M}((1+\varepsilon)^2\tilde{a})}E(u)\leq E(\tilde{u}_{g(1+\varepsilon),1+\varepsilon})
<E(\tilde{u})=m(\tilde{a})
\]
showing the desired result. The proof is completed.
\end{proof}

At this point, we can exhibit the detailed proof of Theorem \ref{maintheorem2}.

\begin{proof}[\emph{\textbf{Proof of Theorem \ref{maintheorem2}}}]
Arguing as before, we can derive that there exists a Palais-Smale sequence
$\{u_n\}\subset \mathcal{M}(a)$ for $E$ restricted on $S(a)$
at the level $m(a)$. In fact, all of the conclusions in Section \ref{Section2}
remain true in this situation and we shall exploit them directly when there is no misunderstanding.
Following the proof of Theorem \ref{maintheorem1}, we continue to divide the proof into three steps.

\textbf{Step I.} The weak limit $u_0\neq0$.

In view of the Step 1 in the proof of Theorem \ref{maintheorem1},
the proof will be done by ruling out the Case I. Suppose it by contradiction that,
if the Case I holds true, then $u_n\to0$ in $L^p(\R^2)$ for all $2<p<+\infty$.
Since we obtain \eqref{2compact1} by Lemma \ref{bounded},
then $F(u_n)\to 0$ in $L^1(\R^2)$ as $n\to\infty$ by Lemma \ref{2compact}.
Hence, taking $E(u_n)\to m(a)$, \eqref{gauge1} and Lemma \ref{estimate} into account, we have that
\[
\limsup_{n\to\infty}\int_{\R^2}|\nabla u_n|^2dx=2\limsup_{n\to\infty}E(u_n)=2
m(a)<\frac{ 4 \pi}{\alpha_0}.
\]
Thereby, we shall choose $\alpha>\alpha_0$ sufficiently close to $\alpha_0$ and $\nu$ sufficiently close to 1 in such
a way that $\frac1\nu+\frac1{\nu^\prime}=1$ and
\[
 |\nabla u_n|_2^2  <\frac{4\pi(1-\epsilon)}{\nu\alpha}~\text{for some suitable}~\epsilon\in(0,1).
\]
Define $\bar{u}_n=\sqrt{\frac{\nu\alpha}{4\pi(1-\epsilon)}}u_n$, then $|\nabla \bar{u}_n|_2^2<1$ and $|\bar{u}_n|_2^2
=\frac{\nu\alpha}{4\pi(1-\epsilon)}a^2<+\infty$.
With these observations, we adopt \eqref{TM2} to obtain
\[
\sup_{n\in \mathbb{N}^+}\int_{\R^2}(e^{\nu\alpha |u_n|^2}-1)dx=
\sup_{n\in \mathbb{N}^+}\int_{\R^2}(e^{4\pi(1-\epsilon)|\bar{u}_n|^2}-1)dx\leq C<+\infty.
\]
which together with \eqref{growth3} with $q>2$ gives that
\[
\int_{\R^2}f(u_n)u_ndx\leq
C \int_{\R^2}|u_n|^{\chi}dx+C\bigg(\int_{\R^2}|u_n|^{q\nu^\prime}dx\bigg)^{\frac1{\nu^\prime}}.
\]
Recalling $J(u_n)=0$, we take advantage of the above fact and Lemma \ref{2compact}
to arrive at $|\nabla u_n|_2^2\to0$. However, it contradicts with Lemma
\ref{positive} and so the Case I cannot occur.

   \textbf{Step II.} $E(u_0)=m(a)$ and $\lim\limits_{n\to\infty}|\nabla u_n|_2^2=|\nabla u_0|_2^2$.

Since $u_0\neq0$, one easily sees that $0<|u_0|_2^2\triangleq \tilde{a}^2\leq
\liminf\limits_{n\to\infty}|u_n|^2_2= a^2$ by Fatou's lemma.
Recalling Lemma \ref{non-increasing} still holds true
in this case, $J(u_0)=0$ by Lemmas \ref{Pohozaev} and \ref{solution},
it then concludes
  from $(f_2)$ and Fatou's lemma again that
\begin{align*}
m(a) & \leq m(\tilde{a})\leq E(u_0)=E(u_0)-\frac12J(u_0)= \frac{1}{2}\int_{\R^{2}}\left[f(u_0 )u -4 F(u_0 )\right]dx \\
    & \leq\frac{1}{2}\liminf_{n\to\infty}\int_{\R^{2}}\left[f(u_n)u_n-4 F(u_n)\right]dx=\liminf_{n\to\infty}[E(u_n)-\frac12J(u_n)]\\
    &=\liminf_{n\to\infty} E(u_n)=m(a)
\end{align*}
showing that $E(u_0)=m(a)=m(\tilde{a})$ and
\begin{equation}\label{2proof1}
 \lim_{n\to\infty}\int_{\R^{2}}\left[f(u_n)u_n-4 F(u_n)\right]dx=\int_{\R^{2}}\left[f(u_0 )u_0 -4 F(u_0 )\right]dx.
\end{equation}
With $E(u_0)=m(a)$ in hands, combining Fatou's lemma and $J(u_0)=0$, we also obtain that
\begin{align*}
	  m(a) &=\lim_{n\to\infty}E(u_n)=\limsup_{n\to\infty}[E(u_n)-\frac{1}{\theta-2}J(u_n)] \\
	& =\limsup_{n\to\infty}\left\{ \frac{\theta-4}{2(\theta-2)}
	\int_{\R^2}[|\nabla u_n|^2+(A_1^2+A_2^2)|u_n|^2]dx+\frac{1}{\theta-2}\int_{\R^{2}}\left[f(u_n)u_n-\theta F(u_n)\right]dx\right\}\\
 & \geq \liminf_{n\to\infty}\left\{ \frac{\theta-4}{2(\theta-2)}
\int_{\R^2}[|\nabla u_n|^2+(A_1^2+A_2^2)|u_n|^2]dx+\frac{1}{\theta-2}\int_{\R^{2}}\left[f(u_n)u_n-\theta F(u_n)\right]dx\right\}\\
 &\geq \frac{\theta-4}{2(\theta-2)}\int_{\R^2}[|\nabla u_0 |^2+(A_1^2+A_2^2)|u_0 |^2]dx+\frac{1}{\theta-2}\int_{\R^{2}}\left[f(u_0)u_0-\theta F(u_0)\right]dx\\
&=E(u_0 )-\frac{1}{\theta-2}J(u _0)=E(u_0 )=m(a).
\end{align*}
As a direct consequence of the above equality, we derive
\[
\lim_{n\to\infty}\int_{\R^2} |\nabla u_n|^2 dx
=\int_{\R^2} |\nabla u_0 |^2 dx,~
\lim_{n\to\infty}\int_{\R^2} (A_1^2+A_2^2)|u_n|^2 dx
=\int_{\R^2} (A_1^2+A_2^2)|u_0 |^2dx,
\]
and
\begin{equation}\label{2proof2}
\lim_{n\to\infty}\int_{\R^{2}}\left[f(u_n)u_n-\theta F(u_n)\right]dx=\int_{\R^{2}}\left[f(u_0 )u_0 -\theta F(u_0 )\right]dx.
\end{equation}
Hence, the Step II concludes.

\smallskip

 \textbf{Step III.} There is a small $a_*>0$ such that $\lambda_0>0$ for all $a\in(0,a_*]$.
So, $|u_0|_2^2=a^2$.

Since $|\nabla u_n-\nabla u_0|_2^2=o_n(1)$, we could continue to choose $\alpha>\alpha_0$
and $\nu,\nu^\prime>1$ adopted in Step II to satisfy
\[
 |\nabla u_n-\nabla u_0|_2^2  <\frac{4\pi}{\nu\alpha(1+\bar{\epsilon})^4}~\text{for some suitable}~\bar{\epsilon}\in(0,1).
\]
Define $\tilde{u}_n=\sqrt{\frac{\nu\alpha(1+\bar{\epsilon})^4}{4\pi}}(u_n-u_0)$, then $|\nabla \tilde{u}_n|_2^2<1$ and $|\tilde{u}_n|_2^2
\leq\frac{\nu\alpha(1+\bar{\epsilon})^4}{4\pi}4a^2<+\infty$.
Besides, for the above fixed $\bar{\epsilon}\in(0,1)$, we need the following two types of Young's inequality
\[
 |a+b|^2\leq(1+\bar{\epsilon})|a|^2+(1+\bar{\epsilon}^{-1})|b|^2,~\forall a,b\in\R
\]
and
\[
e^{a+b}-1\leq \frac{1}{1+\bar{\epsilon}}\big[e^{(1+\bar{\epsilon})a}-1\big]
+\frac{\bar{\epsilon}}{1+\bar{\epsilon}}\big[e^{(1+\bar{\epsilon}^{-1})b}-1\big],~\forall a,b\in\R.
\]
By means of the above facts together with \eqref{TM2}, we derive
 \begin{align*}
\int_{\R^2} (e^{ \nu\alpha u_n^2}-1) dx&\leq\frac{1}{1+\bar{\epsilon}}
\int_{\R^2} (e^{\frac{4\pi}{(1+\bar{\epsilon})^2} |\tilde{u}_n|^2}-1) dx
+\frac{\bar{\epsilon}}{1+\bar{\epsilon}}\int_{\R^2} (e^{\nu\alpha(1+\bar{\epsilon}^{-1})^2 u_0^2}-1) dx \\
  & \leq \frac{C_1}{1+\bar{\epsilon}}+\frac{C_2\bar{\epsilon}}{1+\bar{\epsilon}}\leq C_3 <+\infty,~\forall n\in \mathbb{N}^+.
\end{align*}
Letting $q=\frac{2+\chi}{2}\geq3$ in \eqref{growth3}, we then make full use of the above fact and \eqref{GN} to deduce that
\[
\int_{\R^2} f (u_n)u_ndx \leq \varepsilon a^2\bigg(\int_{\R^2} |\nabla u_n|^2 dx\bigg)^{\frac{\chi-2}{2}}
   +\tilde{C}_\varepsilon a \bigg(\int_{\R^2} |\nabla u_n|^{2}dx\bigg)^{\frac{q-1}2}.
\]
It follows from \eqref{2proof1} and \eqref{2proof2} that
\[
\lim_{n\to\infty}\int_{\R^{2}} f(u_n)u_n dx=\int_{\R^{2}} f(u_0 )u_0  dx.
\]
From which, adopting $|\nabla u_n|_2^2\to|\nabla u_0|_2^2$ as $n\to\infty$, there holds
\[
\int_{\R^2} f (u_0 )u_0 dx \leq \varepsilon a^2\bigg(\int_{\R^2} |\nabla u_0|^2 dx\bigg)^{\frac{\chi-2}{2}}
   +\tilde{C}_\varepsilon a \bigg(\int_{\R^2} |\nabla u_0|^{2}dx\bigg)^{\frac{q-1}2}.
\]
Repeating the calculations of Step 2 in the proof of Theorem \ref{maintheorem1},
we could determine a suitable $a_*>0$ such that $\lambda_0>0$ for all $a\in(0,a_*]$.
At this stage, we reach a conclusion that $u_0\in \mathcal{M}(\tilde{a})$ is a minimizer of
$m(\tilde{a})$ and it is a solution of Eq. \eqref{mainequation1} with $\lambda=\lambda_0>0$.
Owing to Lemma \ref{decreasing}, $m(\tilde{a})$ is strictly decreasing in the right neighborhood of $\tilde{a}$.
So, we must have $\tilde{a}^2=a^2$. Otherwise, there is a sufficiently small
$\varepsilon>0$ such that $(1+\varepsilon)^2\tilde{a}<a$ and $(1+\varepsilon)^2\tilde{a}$ locates
in a right neighborhood of $\tilde{a}$. Therefore, there holds
$m(\tilde{a})>m((1+\varepsilon)^2\tilde{a})\geq m(a)$ which is impossible because of the Step II. So
$u_0\in \mathcal{M}(a)$ is a minimizer of
$m(a)$ and $(u_0,\lambda_0)$ is a couple of weak solutions of problems \eqref{mainequation1}-\eqref{mainequation1a} when $a\in(0,a_*]$.
The proof is completed.
\end{proof}

 \section{The supercritical case}\label{Section5}

In this section, we shall turn to the supercritical case for problems \eqref{mainequation1}-\eqref{mainequation1a}
and it is the main topic in the present article. Note that if there is no misunderstanding,
it is always supposed that the $\mathcal{C}^1$ function $h$ that vanishes in $(-\infty,0]$ satisfies $(h_1)-(h_3)$
for simplicity.

To conclude Theorem \ref{maintheorem4},
as explained before, we begin with verifying the necessary assumptions for
Theorems \ref{maintheorem1} and \ref{maintheorem2}.

Now, we shall deduce that the nonlinearities $f^{R,{\delta}}$ and $f^{R,2}$ given in \eqref{fR}
satisfy some growth conditions which are the counterparts of \eqref{growth1},
\eqref{growth2}, \eqref{growth3} and \eqref{growth4}, respectively.
On the one hand, for every fixed $R>0$, $q>2$, $\alpha>0$ and $\varepsilon>0$, there is a positive constant $M_\varepsilon^R$
which is dependent of $R>0$ and $\varepsilon>0$ such that
\begin{equation}\label{2growth1}
  |f^{R,\delta}(s)|\leq \varepsilon |s|^{\chi-1}+M_\varepsilon^R|s|^{q-1}(e^{\alpha |s|^2}-1),~\forall s\in\R
\end{equation}
which together with $(h_2)$ implies that
\begin{equation}\label{2growth2}
  |F^{R,\delta}(s)|\leq \varepsilon |s|^{\chi }+M_\varepsilon^R|s|^{q }(e^{\alpha |s|^2}-1),~\forall s\in\R.
\end{equation}
Actually, exploiting $(h_1)$ and $(h_3)$ with $\delta\in(0,2)$, we have that
\[
\lim_{s\to0}\frac{f^{R,\delta}(s)}{s^{\chi-1}}=\lim_{s\to0}\frac{h(s)}{s^{\chi-1}}=0~\text{uniformly in}~R>0
\]
and
\[
0\leq \lim_{|s|\to+\infty}\frac{|f^{R,\delta}(s)|}{e^{\alpha |s|^2}-1}\leq
\lim_{|s|\to+\infty}\frac{Me^{(\gamma+\bar{\alpha}_0R^{\tau-\delta})|s|^\delta}}{e^{\alpha |s|^2}-1}
=0
\]
implying the desired result \eqref{2growth1}.
Similarly, we fix $R>0$, $q>2$ and $\varepsilon>0$ to find a $\bar{M}>M$
which is independent of $R$ such that
\begin{equation}\label{2growth3}
  |f^{R,2}(s)|\leq \varepsilon |s|^{\chi-1}+\bar{M} |s|^{q-1}(e^{(\gamma +\bar{\alpha}_0R^{\delta-2}) |s|^2}-1),~\forall s\in\R,
\end{equation}
and
\begin{equation}\label{2growth4}
  |F^{R,2}(s)|\leq \varepsilon |s|^{\chi }+\bar{M} |s|^{q }(e^{(\gamma +\bar{\alpha}_0R^{\delta-2}) |s|^2}-1),~\forall s\in\R.
\end{equation}

Moreover, we have to show the counterparts of $(f_2)$ and $(f_3)$ for the nonlinearity $f^{R,\bar{\delta}}$
which are exhibited as follows.

\begin{lemma}\label{AR}
Let $f$ be defined in \eqref{form} with the $\mathcal{C}^1$ function $h$ satisfying $(h_2)$. Then, for all fixed $R>0$ one has
\[
0<F^{R,\bar{\delta}}(t)\leq \theta f^{R,\bar{\delta}}(t)t,~\forall t\in \R\backslash\{0\}.
\]
\end{lemma}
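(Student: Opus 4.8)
The plan is to establish the stronger Ambrosetti--Rabinowitz type inequality $\theta F^{R,\bar\delta}(t)\le f^{R,\bar\delta}(t)t$ for every fixed $R>0$ and every $t>0$, which is the genuine counterpart of $(f_2)$; the asserted bound then follows at once since $\theta>4>1$ and $f^{R,\bar\delta}(t)t>0$. First I would record positivity: assumption $(h_2)$ reads $0<\theta H(t)\le h(t)t$, so $h(t)>0$ and $H(t)>0$ for every $t>0$; as the exponential weights in \eqref{fR} are strictly positive, $f^{R,\bar\delta}(s)>0$ for $s>0$, whence $F^{R,\bar\delta}(t)=\int_0^t f^{R,\bar\delta}(s)\,ds>0$. (For $t\le0$ both members vanish because $h\equiv0$ on $(-\infty,0]$, so only $t>0$ is genuinely at stake.)

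The key observation I would isolate is that, for each fixed $R>0$ and $t>0$, the integrand $f^{R,\bar\delta}(s)$ is dominated on $[0,t]$ by $h(s)$ times a single nondecreasing exponential evaluated at the right endpoint. Indeed, since $\bar\delta\le 2\le\tau$ (recall $\bar\delta\in\{\delta,2\}$ with $\delta<2\le\tau$), for $0\le s\le R$ one has $s^{\tau-\bar\delta}\le R^{\tau-\bar\delta}$, that is $s^\tau\le R^{\tau-\bar\delta}s^{\bar\delta}$, so the \emph{inner} weight is absorbed by the \emph{outer} one, $e^{\bar\alpha_0 s^\tau}\le e^{\bar\alpha_0 R^{\tau-\bar\delta}s^{\bar\delta}}$; the two branches moreover match at $s=R$. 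Combining this with the monotonicity of $s\mapsto e^{\bar\alpha_0 R^{\tau-\bar\delta}s^{\bar\delta}}$ gives, for $t\ge R$, the uniform bound $f^{R,\bar\delta}(s)\le h(s)\,e^{\bar\alpha_0 R^{\tau-\bar\delta}t^{\bar\delta}}$ valid for all $s\in[0,t]$ (checked separately on $[0,R]$ and on $[R,t]$, using $h\ge0$), while for $0<t\le R$ one simply has $e^{\bar\alpha_0 s^\tau}\le e^{\bar\alpha_0 t^\tau}$ for $s\in[0,t]$.

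With this in hand I would integrate and invoke $(h_2)$. For $0<t\le R$,
\[
F^{R,\bar\delta}(t)=\int_0^t h(s)e^{\bar\alpha_0 s^\tau}\,ds\le e^{\bar\alpha_0 t^\tau}\int_0^t h(s)\,ds=e^{\bar\alpha_0 t^\tau}H(t)\le \frac{1}{\theta}\,e^{\bar\alpha_0 t^\tau}h(t)t=\frac{1}{\theta} f^{R,\bar\delta}(t)t,
\]
and for $t\ge R$ the domination above yields
\[
F^{R,\bar\delta}(t)\le e^{\bar\alpha_0 R^{\tau-\bar\delta}t^{\bar\delta}}\int_0^t h(s)\,ds=e^{\bar\alpha_0 R^{\tau-\bar\delta}t^{\bar\delta}}H(t)\le\frac{1}{\theta}\,e^{\bar\alpha_0 R^{\tau-\bar\delta}t^{\bar\delta}}h(t)t=\frac{1}{\theta} f^{R,\bar\delta}(t)t.
\]
In either case $\theta F^{R,\bar\delta}(t)\le f^{R,\bar\delta}(t)t$, and since $f^{R,\bar\delta}(t)t>0$ and $1/\theta\le\theta$, this gives $0<F^{R,\bar\delta}(t)\le\theta f^{R,\bar\delta}(t)t$, as claimed.

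The argument is essentially elementary, using only $(h_2)$ and the monotonicity of the exponential weights. The one point requiring care—and the main obstacle—is the region $t\ge R$, where one must handle the two-branch definition of $f^{R,\bar\delta}$ and verify that the inner weight $e^{\bar\alpha_0 s^\tau}$ is absorbed by the outer weight on $[0,R]$; this is precisely where the structural hypothesis $\tau\ge\bar\delta$ enters, and it is the reason the cutoff exponent $\bar\delta$ was chosen as in \eqref{fR}. Once that domination is secured, the passage from $H(t)$ to $h(t)t$ through $(h_2)$ is immediate and, crucially, uniform in $R$.
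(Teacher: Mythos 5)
Your proof is correct and follows essentially the same route as the paper: split at $t=R$, absorb the inner weight via $e^{\bar\alpha_0 s^\tau}\le e^{\bar\alpha_0 R^{\tau-\bar\delta}s^{\bar\delta}}$ on $[0,R]$ (using $\tau\ge\bar\delta$), pull the exponential out of the integral at the right endpoint, and finish with $(h_2)$. You in fact establish the stronger Ambrosetti--Rabinowitz form $\theta F^{R,\bar\delta}(t)\le f^{R,\bar\delta}(t)t$ on both ranges, which is what the paper's argument also yields and what is actually used later.
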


\begin{proof}
The reader can refer to \cite{AS1,AS2}, whereas we should exhibit the detailed proof for the completeness.
For all $t\in[0,R]$,
 we can see that $f^{R,\bar{\delta}}(t)=f(t)$
 and so $F^{R,\bar{\delta}}(t)=F(t)$, then the lemma is done for $t\in[0,R]$.
Indeed, for all $t\in[0,R]$, one exploits $(h_2)$ to deduce that
\[
F^{R,\bar{\delta}}(t)=F(t)=\int_0^th(s)e^{\bar{\alpha}_0s^\tau}ds
\leq e^{\bar{\alpha}_0t^\tau}\int_0^th(s)ds=e^{\bar{\alpha}_0t^\tau}H(t)\leq
\theta e^{\bar{\alpha}_0t^\tau}h(t)t=\theta f^{R,\bar{\delta}}(t)t.
\]
Given a $t \in [R,+\infty)$, then
\begin{align*}
 F^{R,\bar{\delta}}(t)& =  \int_{0}^{t}f^{R,\bar{\delta}}(s) ds= \int_{0}^{R}f^{R,\bar{\delta}}(s) ds+
\int_{R}^{t}h(s)e^{\bar{\alpha}_0 R^{\tau-\bar{\delta}}s^{\bar{\delta}}} ds \\
   & = F^{R,\bar{\delta}}(R)+  \int_{R}^{t}h(s)e^{\bar{\alpha}_0 R^{\tau-\bar{\delta}}s^{\bar{\delta}}} ds \\
&=F^{R,\bar{\delta}}(R)+ \int_{0}^{t}h(s)e^{\bar{\alpha}_0 R^{\tau-\bar{\delta}}s^{\bar{\delta}}}ds
-\int_{0}^{R}h(s)e^{\bar{\alpha}_0 R^{\tau-\bar{\delta}}s^{\bar{\delta}}}ds.
\end{align*}
We note that
$$
F^{R,\bar{\delta}}(R)=\int_{0}^{R}h(s)e^{\bar{\alpha}_0 s^{\tau}} ds\leq \int_{0}^{R}h(s)e^{\bar{\alpha}_0 R^{\tau-\bar{\delta}}s^{\bar{\delta}}} ds
$$	
leading to
$$
F^{R,\bar{\delta}}(t) \leq \int_{0}^{t}h(s)e^{\bar{\alpha}_0 R^{\tau-\bar{\delta}}s^{\bar{\delta}}}ds
\leq e^{\alpha_0 R^{\tau-\bar{\delta}}t^{\bar{\delta}}}\int_{0}^{t}h(s) ds= e^{\bar{\alpha}_0 R^{\tau-\bar{\delta}}t^{\bar{\delta}}}H(t).
$$
Thereby, it follows from $(h_2)$ that
$$
\theta F^{R,\bar{\delta}}(t) \leq \theta e^{\bar{\alpha}_0 R^{\tau-\bar{\delta}}t^{\bar{\delta}}}H(t)
 \leq e^{\bar{\alpha}_0 R^{\tau-\bar{\delta}}t^{\bar{\delta}}} h(t)t=f^{R,\bar{\delta}}(t)t,~  \forall  t \geq R.
$$
The proof is completed.
\end{proof}

\begin{lemma}\label{monotone}
Let $f$ be defined in \eqref{form} with the $\mathcal{C}^1$ function $h$ satisfying $(h_2)-(h_4)$. Then, for all fixed $R>0$ one has
\[
 \bar{F}^{R,\bar{\delta}}(t)/t^{4}~\text{is strictly increasing in}~(0,+\infty),
\]
where $\bar{F}^{R,\bar{\delta}}(t)=f^{R,\bar{\delta}}(t)t-2F^{R,\bar{\delta}}(t)$ for all $t\in\R$.
\end{lemma}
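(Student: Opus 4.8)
The plan is to show that the derivative of $t\mapsto \bar F^{R,\bar\delta}(t)/t^4$ is positive. Since $f^{R,\bar\delta}$ is continuous on $(0,+\infty)$ and of class $\mathcal C^1$ away from $t=R$ (where it only has a corner), the quotient $g(t):=\bar F^{R,\bar\delta}(t)/t^4$ is continuous on $(0,+\infty)$ and differentiable off $t=R$, with
\[
g'(t)=t^{-5}\,\Theta^{R,\bar\delta}(t),\qquad \Theta^{R,\bar\delta}(t):=[f^{R,\bar\delta}]'(t)t^2-5f^{R,\bar\delta}(t)t+8F^{R,\bar\delta}(t).
\]
Hence it suffices to prove $\Theta^{R,\bar\delta}(t)>0$ on $(0,R)$ and on $(R,+\infty)$; strict monotonicity on all of $(0,+\infty)$ then follows from the continuity of $g$ at $t=R$ (strict monotonicity on $(0,R]$ and on $[R,+\infty)$ glues across $R$).

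First I would write $f^{R,\bar\delta}(t)=h(t)\mathcal E(t)$, where $\mathcal E(t)=e^{\bar\alpha_0 t^{\tau}}$ for $0\le t\le R$ and $\mathcal E(t)=e^{\bar\alpha_0 R^{\tau-\bar\delta}t^{\bar\delta}}$ for $t\ge R$ is continuous, positive and strictly increasing, with $\mathcal E'>0$ off $t=R$. An integration by parts in $F^{R,\bar\delta}(t)=\int_0^t h\mathcal E$ gives
\[
\bar F^{R,\bar\delta}(t)=\mathcal E(t)\bar H(t)+2\int_0^t H(s)\mathcal E'(s)\,ds,
\]
and therefore, writing $P(t):=h'(t)t^2-5h(t)t+8H(t)$ and $N(t):=tH(t)\mathcal E'(t)-4\int_0^t H(s)\mathcal E'(s)\,ds$,
\[
\Theta^{R,\bar\delta}(t)=\mathcal E'(t)\bar H(t)\,t+\mathcal E(t)P(t)+2N(t).
\]
By $(h_2)$ one has $\bar H(t)=h(t)t-2H(t)>0$ (because $\theta>4$), while $P(t)>0$ is exactly the reformulation of the monotonicity asserted in $(h_3)$; thus the first two summands are strictly positive, and the whole matter reduces to controlling the sign of $N$.

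The key computation is that $s\mapsto H(s)\mathcal E'(s)/s^3$ is strictly increasing on each of $(0,R)$ and $(R,+\infty)$: on the former, $\frac{d}{ds}\log\big(H\mathcal E'/s^3\big)=\frac{h}{H}+\frac{\tau-4}{s}+\frac{\mathcal E'}{\mathcal E}\ge\frac{\theta+\tau-4}{s}+\frac{\mathcal E'}{\mathcal E}>0$, using $h/H\ge\theta/s$ from $(h_2)$ and $\theta>4$, and similarly on the latter with $\tau$ replaced by $\bar\delta$ (still positive since $\theta>4$). Consequently $N\ge0$ on $(0,R)$, which settles $\Theta^{R,\bar\delta}>0$ there, and $N'(t)=t^4\,(H\mathcal E'/t^3)'>0$ shows $N$ is \emph{increasing} on $(R,+\infty)$. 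The hard part is the corner $t=R$: since $\bar\delta<\tau$, the derivative $\mathcal E'$ jumps \emph{downward} at $R$, so $N$ drops there and may become negative on a right neighborhood of $R$. To close the argument I would track this jump explicitly; writing
\[
\Theta^{R,\bar\delta}(R^{+})=\mathcal E(R)\big[\bar\alpha_0\bar\delta R^{\tau+1}h(R)+P(R)-2\bar\alpha_0\tau R^{\tau}H(R)\big]+2N(R^{-}),
\]
and using $R\,h(R)\ge\theta H(R)$ from $(h_2)$ together with the exponent restriction $\bar\delta\in[8\theta^{-1},2)$ in $(h_4)$ and $N(R^{-})\ge0$ (inherited from the first interval), I expect $\Theta^{R,\bar\delta}(R^{+})>0$; since $N$ is increasing on $(R,+\infty)$ while $\mathcal E$ and $\mathcal E'$ grow, $\Theta^{R,\bar\delta}$ then remains positive for all $t>R$. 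This corner estimate — quantifying how much of the positivity accumulated on $(0,R]$ survives the downward jump of $\mathcal E'$ — is where the genuine difficulty lies, the remainder being routine bookkeeping with $(h_2)$ and $(h_3)$.
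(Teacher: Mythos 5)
Your reduction to the positivity of $\Theta^{R,\bar\delta}(t)=[f^{R,\bar\delta}]'(t)t^2-5f^{R,\bar\delta}(t)t+8F^{R,\bar\delta}(t)$ on $(0,R)\cup(R,+\infty)$, the decomposition $\Theta^{R,\bar\delta}=\mathcal E'\bar H\,t+\mathcal E P+2N$, and the treatment of $(0,R]$ are all correct, and on $(0,R]$ your argument (monotonicity of $H\mathcal E'/s^3$ giving $N\ge 0$) is essentially equivalent to the paper's, which instead integrates by parts and bounds $\int_0^t H(s)\mathcal E'(s)\,ds\le \bar\alpha_0 H(t)t^\tau e^{\bar\alpha_0 t^\tau}$ by the monotonicity of $H\mathcal E$, arriving at $\Theta^{R,\bar\delta}(t)\ge \mathcal E(t)P(t)+\bar\alpha_0 t^\tau\mathcal E(t)(\tau\theta-8)H(t)>0$.

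The genuine gap is exactly where you flag it: the corner estimate does not close with the ingredients you list. From your own identity, using only $N(R^-)\ge 0$, $P(R)>0$ and $Rh(R)\ge\theta H(R)$, you get
\begin{equation*}
\Theta^{R,\bar\delta}(R^+)\ \ge\ \mathcal E(R)\,\bar\alpha_0 R^{\tau}H(R)\bigl(\bar\delta\theta-2\tau\bigr)+\mathcal E(R)P(R),
\end{equation*}
and since $\bar\delta<2$ forces $\bar\delta\theta/2<\theta$ while $(h_4)$ only guarantees $\bar\delta\theta\ge 8$, the factor $\bar\delta\theta-2\tau$ is negative for every $\tau>\bar\delta\theta/2$ (in particular for all $\tau>4$ when $\bar\delta\theta=8$), with no hypothesis forcing $P(R)$ to compensate. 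The information ``$N(R^-)\ge 0$'', i.e.\ $4\int_0^R H\mathcal E'\le \bar\alpha_0\tau R^\tau H(R)\mathcal E(R)$, is simply too weak when $\tau>4$. The paper sidesteps the corner entirely: it proves the single estimate $\int_0^t H(s)\mathcal E'(s)\,ds\le \bar\alpha_0 R^{\tau-\bar\delta}H(t)t^{\bar\delta}\mathcal E(t)=\tfrac1{\bar\delta}\,tH(t)\mathcal E'(t)$ for \emph{all} $t\ge R$, obtained by bounding $\int_0^R$ and $\int_R^t$ separately via the monotonicity of $H\mathcal E$ (the boundary terms at $s=R$ cancel favorably because $H(t)\mathcal E(t)\ge H(R)\mathcal E(R)$); this yields pointwise
\begin{equation*}
\mathcal E'(t)\bar H(t)t+2N(t)\ \ge\ \bar\alpha_0 R^{\tau-\bar\delta}t^{\bar\delta}\mathcal E(t)\bigl(\bar\delta\theta-8\bigr)H(t)\ \ge\ 0,\qquad t\ge R,
\end{equation*}
with no propagation argument needed. (The same monotonicity-of-$H\mathcal E$ bound at $t=R$, namely $\int_0^R H\mathcal E'\le \bar\alpha_0 R^\tau H(R)\mathcal E(R)$, is what you would need to repair your corner computation; it is strictly sharper than $N(R^-)\ge 0$ precisely when $\tau>4$.) As written, your proof is incomplete on $[R,+\infty)$.
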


\begin{proof}
First of all, one simply observes that $f^{R,\bar{\delta}}(t)=f(t)$ for all $t\in[0,R]$,
then $(f^{R,\bar{\delta}})^\prime(t)=f^\prime(t)$ and $F^{R,\bar{\delta}}(t)=F(t)$ for all $t\in[0,R]$. Then,
for any fixed $t\in(0,R]$,
\begin{align*}
F(t)& =\int_0^th(s)e^{\bar{\alpha}_0s^\tau}ds=H(t)e^{\bar{\alpha}_0 t^\tau}
-\int_0^tH(s)d e^{\bar{\alpha}_0 s^\tau}=H(t)e^{\bar{\alpha}_0 t^\tau}
-\bar{\alpha}_0\tau \int_0^tH(s)s^{\tau-1} e^{\bar{\alpha}_0 s^\tau}ds \\
    &
\geq H(t)e^{\bar{\alpha}_0 t^\tau}
-\bar{\alpha}_0\tau H(t)e^{\bar{\alpha}_0 t^\tau} \int_0^ts^{\tau-1} ds
 =H(t)e^{\bar{\alpha}_0 t^\tau}
-\bar{\alpha}_0 H(t)t^\tau e^{\bar{\alpha}_0 t^\tau}.
\end{align*}
Hence, for all $t\in(0,R]$, using $(h_2)$, we deduce that
\[\begin{gathered}
  (f^{R,\bar{\delta}})^\prime(t)t^2-5f^{R,\bar{\delta}}(t)t+8F^{R,\bar{\delta}}(t)=f^\prime(t)t^2-5f(t)t+8F(t)\hfill\\
  \ \ \ \    \geq  \big[h^\prime(t)t^2-5h(t)t+8H(t)\big]e^{\bar{\alpha}_0 t^\tau}
+\bar{\alpha}_0 t^{\tau } e^{\bar{\alpha}_0 t^\tau}[\tau h(t)t-8H(t)]   \hfill\\
\ \ \ \  >\bar{\alpha}_0 t^{\tau } e^{\bar{\alpha}_0 t^\tau}(\tau\theta-8)H(t)>0,   \hfill\\
\end{gathered}\]
where we have applied a very similar result in Lemma A.3 in the Appendix below to $(h_3)$
in the last second inequality. So, $\forall t\in(0,R]$, it has that
\begin{equation}\label{monotone1}
 \frac d{dt}\left(\frac{\bar{F}^{R,\bar{\delta}}(t)}{t^{4}} \right)
=\frac{(f^{R,\bar{\delta}})^\prime(t)t^2-5f^{R,\bar{\delta}}(t)t+8F^{R,\bar{\delta}}(t)}{t^5}>0,
\end{equation}
showing the conclusions for all $t\in(0,R]$.

We next consider the case for all $t\in[R,+\infty)$.
Firstly, one can calculate that
\begin{align*}
  F^{R,\bar{\delta}}(R) &= \int_{0}^{R}h(s)e^{\bar{\alpha}_0 s^{\tau}} ds = \int_{0}^{R} e^{\bar{\alpha}_0 s^{\tau}} dH(s)
  =H(R)e^{\bar{\alpha}_0 R^{\tau}} -\bar{\alpha}_0\tau \int_{0}^{R} H(s)s^{\tau-1}e^{\bar{\alpha}_0 s^{\tau}}ds \\
   & \geq  H(R)e^{\bar{\alpha}_0 R^{\tau}} -\bar{\alpha}_0 H(R)e^{\bar{\alpha}_0 R^{\tau}}\int_{0}^{R}\tau s^{\tau-1}ds
   =H(R)e^{\bar{\alpha}_0 R^{\tau}} -\bar{\alpha}_0 R^{\tau}H(R)e^{\bar{\alpha}_0 R^{\tau}}.
\end{align*}
Similarly, one has that
 \begin{align*}
 \int_{R}^{t}h(s)e^{\bar{\alpha}_0 R^{\tau-\bar{\delta}}s^{\bar{\delta}}}ds& =
\int_{R}^{t}e^{\bar{\alpha}_0 R^{\tau-\bar{\delta}}s^{\bar{\delta}}}dH(s)
=H(t)e^{\bar{\alpha}_0 R^{\tau-\bar{\delta}}t^{\bar{\delta}}}-H(R)e^{\bar{\alpha}_0 R^{\tau}}
-\int_{R}^{t}H(s)de^{\bar{\alpha}_0 R^{\tau-\bar{\delta}}s^{\bar{\delta}}} \\
   & =H(t)e^{\bar{\alpha}_0 R^{\tau-\bar{\delta}}t^{\bar{\delta}}}-H(R)e^{\bar{\alpha}_0 R^{\tau}}
-\bar{\alpha}_0 R^{\tau-\bar{\delta}}\bar{\delta}\int_{R}^{t}H(s)s^{\bar{\delta}-1}e^{\bar{\alpha}_0 R^{\tau-\bar{\delta}}s^{\bar{\delta}}}ds\\
&\geq H(t)e^{\bar{\alpha}_0 R^{\tau-\bar{\delta}}t^{\bar{\delta}}}-H(R)e^{\bar{\alpha}_0 R^{\tau}}
-\bar{\alpha}_0  R^{\tau-\bar{\delta}}H(t)e^{\bar{\alpha}_0 R^{\tau-\bar{\delta}}t^{\bar{\delta} }}\int_{R}^{t}\bar{\delta}s^{\bar{\delta}-1}ds\\
&\geq H(t)e^{\bar{\alpha}_0 R^{\tau-\bar{\delta}}t^{\bar{\delta}}}-H(R)e^{\bar{\alpha}_0 R^{\tau}}
-\bar{\alpha}_0  R^{\tau-\bar{\delta}}H(t)t^{\bar{\delta} }e^{\bar{\alpha}_0 R^{\tau-\bar{\delta}}t^{\bar{\delta} }}
+\bar{\alpha}_0  R^{\tau}H(R)e^{\bar{\alpha}_0 R^{\tau}}.
\end{align*}
From which, we obtain
\begin{align*}
 F^{R,\bar{\delta}}(t) & =F^{R,\bar{\delta}}(R)+ \int_{R}^{t}h(s)e^{\bar{\alpha}_0 R^{\tau-\bar{\delta}}s^{\bar{\delta}}}ds
  \geq H(t)e^{\bar{\alpha}_0 R^{\tau-\bar{\delta}}t^{\bar{\delta}}}
 -\bar{\alpha}_0  R^{\tau-\bar{\delta}}H(t)t^{\bar{\delta} }e^{\bar{\alpha}_0 R^{\tau-\bar{\delta}}t^{\bar{\delta} }}
\end{align*}
Recalling $\delta \in [8\theta^{-1},2)$ in $(h_4)$ which implies that $\bar{\delta} -8\theta\geq0$, adopting $(h_2)-(h_3)$,
 so there holds
\[\begin{gathered}
  (f^{R,\bar{\delta}})^\prime(t)t^2-5f^{R,\bar{\delta}}(t)t+8F^{R,\bar{\delta}}(t) \hfill\\
  \ \ \ \    \geq  \big[h^\prime(t)t^2-5h(t)t+8H(t)\big]e^{\bar{\alpha}_0 R^{\tau-\bar{\delta}}t^{\bar{\delta}}}
+\bar{\alpha}_0  R^{\tau-\bar{\delta}}t^{\bar{\delta} }e^{\bar{\alpha}_0 R^{\tau-\bar{\delta}}t^{\bar{\delta} }}
[\bar{\delta} h(t)t-8H(t)]   \hfill\\
\ \ \ \  >\bar{\alpha}_0  R^{\tau-\bar{\delta}}t^{\bar{\delta} }e^{\bar{\alpha}_0 R^{\tau-\bar{\delta}}t^{\bar{\delta} }}
(\bar{\delta} -8\theta)H(t)\geq0,   \hfill\\
\end{gathered}\]
yielding that \eqref{monotone1} remains true for all $t\in[R,+\infty)$. The proof is completed.
\end{proof}

With \eqref{2growth1}, \eqref{2growth2}, \eqref{2growth3}, and \eqref{2growth4}
as well as Lemmas \ref{AR}-\ref{monotone}
 in hands, we could
proceed as the proofs in Section \ref{Section2} to get that,
for all fixed $R>0$, there are two sequences $\{u_n\}\subset S_a$ and $\{\lambda_n^{R,\bar{\delta}}\}\subset \R$ such that
\begin{equation}\label{PSsequence1}
  E^{R,\bar{\delta}}(u_n)=m^{R,\bar{\delta}}(a)+o_n(1)~\text{as}~n\to\infty,
\end{equation}
\begin{equation}\label{PSsequence2}
  -\Delta u_n+\lambda_n^{R,\bar{\delta}}u_n+ A_0u_n+\sum_{j=1}^2A_j^2[u_n] u_n- f^{R,\bar{\delta}}(u_n )
=o_n(1)~\text{as}~n\to\infty,
\end{equation}
\begin{equation}\label{PSsequence3}
J^{R,\bar{\delta}}(u_n)= o_n(1)~\text{as}~n\to\infty,
\end{equation}
where $E^{R,\bar{\delta}}:S(a)\to \R$ is given by \eqref{functional2} and
\begin{equation}\label{montainpassvalue}
  m^{R,\bar{\delta}}(a)\triangleq\inf_{u\in \mathcal{M}^{R,\bar{\delta}}(a)} E^{R,\bar{\delta}}(u)>0,
 ~\mathcal{M}^{R,\bar{\delta}}(a)=\big\{u\in S(a):J^{R,\bar{\delta}}(u)=0\big\}.
\end{equation}
Here the auxiliary energy functional $J^{R,\bar{\delta}}:S(a)\to\R$ is defined by
\[
J^{R,\bar{\delta}}(u)= \int_{\R^2}[|\nabla u|^2 + (A_1^2+A_2^2)|u|^2]dx-\int_{\R^2}[f^{R,\bar{\delta}}(u)u-2F^{R,\bar{\delta}}(u)]dx.
\]

 Combining
\eqref{PSsequence1}, \eqref{PSsequence2} and \eqref{PSsequence3}
as well as Lemma \ref{AR}, we can repeat the proof of Lemmas \ref{bounded} and \ref{2bounded}
to see that the sequence $\{u_n\}\subset S(a)$ is uniformly bounded in $H^1(\R^2)$ and
\begin{equation}\label{2bounded1}
\lambda_n^{R,\bar{\delta}}
=\frac2{a^2}\left\{\int_{\R^2} |\nabla u_n|^2  dx+3\int_{\R^2} F^{R,\bar{\delta}}(u_n)dx
-\int_{\R^2} f ^{R,\bar{\delta}}(u_n)u_ndx\right\}.
\end{equation}
Moreover, we can derive that \eqref{un} and \eqref{lambda}
 remain true in some suitable forms in this situation.

In what follows, we shall handle the case $\bar{\delta}=\delta$
and $\bar{\delta}=2$ in two subsections.

\subsection{The case $\bar{\delta}=\delta$}\label{subcritical}\

In this subsection, we know that the nonlinearity $f^{R,\bar{\delta}}=f^{R,\delta}$.
According to the above discussions,
we can give the proof of the first part (subcritical exponential case) of Theorem \ref{maintheorem4}.

\begin{proof}[\textbf{\emph{Proof of Theorem \ref{maintheorem4}}}]
The proof is totally similar to that of Theorem \ref{maintheorem1},
so we omit it here.
\end{proof}

Before presenting the proof of Theorem \ref{maintheorem3},
we have to study the $L^\infty$-estimate for the nontrivial solution $(\bar{u}_0^R,\bar{\lambda}_0^R)$
established in Theorem \ref{maintheorem4}.
We first show that any sequence satisfying
\eqref{PSsequence1}, \eqref{PSsequence2} and \eqref{PSsequence3} is uniformly bounded in $n\in \mathbb{N}^+$ and $R>0$
 for some suitable $\bar{\alpha}_0>0$.

\begin{lemma}\label{10bounded}
 There is a constant
 $\bar{\alpha}_0^*=\frac{1}{R^{\tau-\delta}}>0$ such that for all $\bar{\alpha}_0\in(0,\bar{\alpha}_0^*)$ and $\tau>2$,
the sequence $\{u_n\}\subset H^1(\R^2)$ satisfying \eqref{PSsequence2}, \eqref{PSsequence3} and \eqref{PSsequence3} is uniformly bounded in $n\in \mathbb{N}^+$
and $R>0$, that is,
there is a constant $\Sigma>0$ independent of $n\in \mathbb{N}^+$ and $R>0$ such that
\begin{equation}\label{10bounded01}
\sup_{n\in \mathbb{N}}|\nabla u_n |_{2}^2\leq\Sigma<+\infty~\text{\emph{and}}~0<\lambda_n^{R,\delta}\leq \Sigma<+\infty.
\end{equation}
\end{lemma}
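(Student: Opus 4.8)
The whole point is to turn the $R$-dependent exponent in \eqref{fR} into an $R$-free subcritical bound, so I would begin there. For $0\le t\le R$ the inequality $t^\tau=t^{\tau-\delta}t^\delta\le R^{\tau-\delta}t^\delta$ holds because $\tau>\delta$, while for $t\ge R$ the exponent in \eqref{fR} is literally $\bar\alpha_0R^{\tau-\delta}t^\delta$; combining either case with $(h_4)$ gives
\[
0\le f^{R,\delta}(t)\le Me^{(\gamma+\bar\alpha_0R^{\tau-\delta})t^\delta},\qquad\forall t\ge0,\ \forall R>0.
\]
Imposing $\bar\alpha_0<\bar\alpha_0^*=R^{-(\tau-\delta)}$ makes $\bar\alpha_0R^{\tau-\delta}<1$, so $f^{R,\delta}(t)\le Me^{(\gamma+1)t^\delta}$ with $\delta<2$, i.e.\ a genuinely \emph{subcritical} exponential bound whose constant no longer depends on $R$. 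This is the step I expect to carry the real weight: it guarantees that the constants $M_\varepsilon^R$ in \eqref{2growth1}--\eqref{2growth2} may be replaced by $R$-independent ones, after which every subsequent estimate becomes uniform almost for free.

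Next I would produce a uniform upper bound $m^{R,\delta}(a)\le C_0$. The elementary but crucial observation is that $f^{R,\delta}(r)\ge h(r)$ for $r\ge0$ (the exponential factor is $\ge1$), so $F^{R,\delta}(s)\ge H(s)$ for every $s$, where $H$ is independent of $R$ and $\bar\alpha_0$. Testing the mountain-pass characterization $m^{R,\delta}(a)=\inf_{u\in S(a)}\max_{t>0}E^{R,\delta}(u_t)$ (obtained as in Lemma \ref{2PSsequence}) with a single fixed $u\in S(a)$ and using this lower bound,
\[
E^{R,\delta}(u_t)\le\frac{t^2}2\int_{\R^2}[|\nabla u|^2+(A_1^2+A_2^2)|u|^2]\,dx-t^{-2}\int_{\R^2}H(tu)\,dx,
\]
and since $(h_2)$ forces $H(s)\gtrsim s^\theta$ with $\theta>4$ for large $s$, the right-hand side has a finite maximum over $t>0$ depending on neither $R$ nor $\bar\alpha_0$. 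Hence $m^{R,\delta}(a)\le C_0$ uniformly.

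With these two facts I would close the gradient bound exactly as in Lemma \ref{bounded}: subtracting $\tfrac1{\theta-2}$ times \eqref{PSsequence3} from \eqref{PSsequence1} and invoking the $(f_2)$-type inequality $\theta F^{R,\delta}(t)\le f^{R,\delta}(t)t$ of Lemma \ref{AR} yields
\[
m^{R,\delta}(a)+o_n(1)\ge\frac{\theta-4}{2(\theta-2)}\int_{\R^2}|\nabla u_n|^2\,dx,
\]
so $\sup_n|\nabla u_n|_2^2\le\Sigma$ with $\Sigma$ depending only on $C_0$ and $\theta$; the same computation with the multiplier $\tfrac12$ bounds $\int_{\R^2}F^{R,\delta}(u_n)\,dx$ and $\int_{\R^2}f^{R,\delta}(u_n)u_n\,dx$ uniformly in $n$ and $R$.

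Finally I would read off $\lambda_n^{R,\delta}$ from \eqref{2bounded1}. The upper bound is immediate, $\lambda_n^{R,\delta}\le\frac2{a^2}\big(|\nabla u_n|_2^2+3\int_{\R^2}F^{R,\delta}(u_n)\,dx\big)\le\Sigma$. For positivity I would rerun Step 3 of the proof of Theorem \ref{maintheorem1}: since $\sup_n|\nabla u_n|_2^2\le\Sigma$, I may fix $\alpha>0$ with $2\Sigma\alpha<4\pi$, apply \eqref{TM2} and then the Gagliardo--Nirenberg inequality \eqref{GN} with $q=\tfrac{2+\chi}2$ to the $R$-uniform bound \eqref{2growth1}, obtaining $\int_{\R^2}f^{R,\delta}(u_n)u_n\,dx\le\varepsilon a^2(\cdots)+Ca(\cdots)$ with $C$ independent of $R$; shrinking the mass to the range $a\in(0,\bar a^*_R]$ already fixed in Theorem \ref{maintheorem4} then forces $|\nabla u_n|_2^2+3\int_{\R^2}F^{R,\delta}(u_n)\,dx>\int_{\R^2}f^{R,\delta}(u_n)u_n\,dx$, so that $\lambda_n^{R,\delta}\ge\frac6{a^2}\int_{\R^2}F^{R,\delta}(u_n)\,dx>0$. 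Collecting both estimates gives \eqref{10bounded01} with one $\Sigma$ independent of $n$ and $R$, which settles the one genuinely delicate point, namely the uniformity in $R$, already handled by the subcritical bound of the first paragraph.
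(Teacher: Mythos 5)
Your proposal is correct and follows essentially the same route as the paper: the $R$-free subcritical bound $f^{R,\delta}(t)\le Me^{(\gamma+\bar\alpha_0R^{\tau-\delta})t^\delta}\le Me^{(\gamma+1)t^\delta}$ under $\bar\alpha_0R^{\tau-\delta}<1$, the uniform bound $m^{R,\delta}(a)\le\bar m(a)$ via $F^{R,\delta}\ge H$ and comparison with the $R$-independent functional $I$, and then the Lemma \ref{bounded}-type computation for the gradient bound. Your final paragraph merely makes explicit the positivity argument for $\lambda_n^{R,\delta}$ (rerunning Step 3 of Theorem \ref{maintheorem1} with the $R$-uniform constants and small $a$) that the paper leaves implicit, so there is nothing substantive to add.
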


\begin{proof}
We claim that there is a constant $\bar{m}(a)>0$ which is independent of $R>0$ such that
\begin{equation}\label{1bounded4}
0<m^{R,{\delta}}(a)\leq \bar{m}(a)<+\infty,~\forall R>0.
\end{equation}
Indeed, in view of the definition of $f^{R,\delta}$, one concludes that $F^{R,\delta}(t)\geq H(t)$
for all $t\in\R$, and so, $E^{R,\delta}(u)\leq I(u)$ for all $u\in H^1(\R^2)$, where the
energy functional $I:H^1(\R^2)\to\R$ is defined by
\begin{equation}\label{hhh}
I(u)=\frac{1}{2}\int_{\R^2}[|\nabla u|^2 + (A_1^2+A_2^2)|u|^2]dx-
\int_{\R^2}H(u)dx.
\end{equation}
Here, it is enough to choose the constant $\bar{m}(a)$ to be a minimization of $I|_{\mathcal{M}(a)}$.

Secondly, we improve \eqref{2growth1} and \eqref{2growth2}
in the sense that the positive constant $M_\varepsilon^R$ is independent of $R>0$
 under some suitable $\bar{\alpha}_0>0$. Indeed, for each $\bar{\alpha}_0\in(0,\bar{\alpha}_0^*)$ with
$\bar{\alpha}_0^*=\frac{1}{R^{\tau-\delta}}>0$, by $(h_4)$ with $\delta\in(0,2)$, one has that
\[
|f^{R,\delta}(s)|\leq Me^{\gamma |s|^\delta}e^{\bar{\alpha}_0^* R^{\tau-\delta} |s|^\delta}
\leq M e^{(\gamma + 1)|s|^\delta}, \quad \forall s \in \mathbb{R}.
\]
Thus, given $\alpha>0$, there is $M_1=M_1(\alpha)>0$ independent of $R>0$, such that
\[
|f^{R,\delta}(s)| \leq M_1e^{\alpha |s|^2}, ~\forall s\in\R.
\]
Therefore, there is $M_\varepsilon>0$ independent of $R>0$ such that
\begin{equation}\label{22growth1}
  |f^{R,\delta}(s)|\leq \varepsilon |s|^{\chi-1}+M_\varepsilon|s|^{q-1}(e^{\alpha |s|^2}-1),~\forall s\in\R
\end{equation}
and
\begin{equation}\label{22growth2}
  |F^{R,\delta}(s)|\leq \varepsilon |s|^{\chi }+M_\varepsilon|s|^{q }(e^{\alpha |s|^2}-1),~\forall s\in\R.
\end{equation}

Finally, by virtue of \eqref{1bounded4} and \eqref{22growth1}-\eqref{22growth2},
we could repeat the calculations exploited in Lemma \ref{bounded}
to derive the first part of \eqref{10bounded01}. From it and \eqref{1bounded4} combined with \eqref{PSsequence1}, we obtain the
second part of \eqref{10bounded01}. Hence,
the proof of this lemma is completed.
 \end{proof}

Thanks to the growth conditions \eqref{22growth1} and \eqref{22growth2} with respect to the nonlinearity $f^{R,\delta}$,
we recall the choice of $\bar{a}^*_R>0$ which is very similar to its counterpart in the Step
3 in the proof of Theorem \ref{maintheorem1}, it would conclude that
 $\bar{a}^*_R$ is independent of $R>0$.
 Furthermore, we have the following result.

 \begin{lemma}\label{3A}
 Let $(\bar{u}_0^R,\bar{\lambda}_0^R)$
established by Theorem \ref{maintheorem4} for all fixed $R>0$, then there is a constant
 $\bar{\alpha}_0^*=\frac{1}{R^{\tau-\delta}}>0$ such that for all $\bar{\alpha}_0\in(0,\bar{\alpha}_0^*)$ and $\tau>2$, we have
 \[
 \sup_{R>0}|A_0[\bar{u}_0^R]+A_1^2[\bar{u}_0^R]+A_2^2[\bar{u}_0^R]|_\infty\leq C_A,
 \]
 where $C_A\in(0,+\infty)$ is a constant independent of $R>0$.
 \end{lemma}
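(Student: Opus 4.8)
The plan is to bound each gauge field in $L^\infty(\R^2)$ directly from its integral representation, using as the sole input the fact that $\bar{u}_0^R$ is bounded in $H^1(\R^2)$ uniformly in $R$. First I would record this uniform bound: since the couple $(\bar{u}_0^R,\bar{\lambda}_0^R)$ produced in Theorem \ref{maintheorem4} arises (as in the proof of Theorem \ref{maintheorem1}) as the strong $H^1$-limit of a sequence $\{u_n\}$ obeying \eqref{PSsequence1}--\eqref{PSsequence3}, estimate \eqref{10bounded01} of Lemma \ref{10bounded} passes to the limit. As $\bar{u}_0^R\in\mathcal{M}^{R,\delta}(a)\subset S(a)$ gives $|\bar{u}_0^R|_2=a$, this yields, for every $\bar{\alpha}_0\in(0,\bar{\alpha}_0^*)$ with $\bar{\alpha}_0^*=R^{-(\tau-\delta)}$,
\[
\|\bar{u}_0^R\|_{H^1(\R^2)}^2\leq a^2+\Sigma=:\Sigma',
\]
where $\Sigma'$ is independent of $R$.

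Next I would estimate $A_1[\bar{u}_0^R]$ and $A_2[\bar{u}_0^R]$ in $L^\infty$. From \eqref{CSS2e1}--\eqref{CSS2e2} one has the pointwise bound
\[
|A_j[\bar{u}_0^R](x)|\leq\frac1{4\pi}\int_{\R^2}\frac{(\bar{u}_0^R)^2(y)}{|x-y|}\,dy,\qquad j=1,2,
\]
and I would split this Riesz potential over $\{|x-y|\leq1\}$ and $\{|x-y|>1\}$. On the far set $|x-y|^{-1}\leq1$, so that contribution is at most $|\bar{u}_0^R|_2^2=a^2$. On the near set, Hölder's inequality with $p\in(1,2)$ and $p'=p/(p-1)$ gives
\[
\int_{|x-y|\leq1}\frac{(\bar{u}_0^R)^2(y)}{|x-y|}\,dy
\leq\Big(\int_{|z|\leq1}|z|^{-p}\,dz\Big)^{1/p}\,|\bar{u}_0^R|_{2p'}^{2},
\]
where the first factor equals $\big(2\pi/(2-p)\big)^{1/p}$ and, by the Sobolev embedding $H^1(\R^2)\hookrightarrow L^{2p'}(\R^2)$, the second is $\leq C\|\bar{u}_0^R\|_{H^1(\R^2)}^2\leq C\Sigma'$. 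Hence $|A_j[\bar{u}_0^R]|_\infty\leq C(a^2+\Sigma')$ for $j=1,2$, with a constant depending only on $a$ and $\Sigma'$, thus independent of $R$ and of $x$.

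Finally I would feed this into \eqref{CSS2d}: using the uniform bound just obtained,
\[
|A_0[\bar{u}_0^R](x)|\leq\frac{|A_1[\bar{u}_0^R]|_\infty+|A_2[\bar{u}_0^R]|_\infty}{2\pi}\int_{\R^2}\frac{(\bar{u}_0^R)^2(y)}{|x-y|}\,dy,
\]
and the same near/far splitting bounds the remaining Riesz potential uniformly, giving $|A_0[\bar{u}_0^R]|_\infty\leq C$ independently of $R$. Combining the three estimates,
\[
\big|A_0[\bar{u}_0^R]+A_1^2[\bar{u}_0^R]+A_2^2[\bar{u}_0^R]\big|_\infty
\leq|A_0[\bar{u}_0^R]|_\infty+|A_1[\bar{u}_0^R]|_\infty^2+|A_2[\bar{u}_0^R]|_\infty^2=:C_A<+\infty,
\]
with $C_A$ depending only on $a$ and $\Sigma'$. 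The one point requiring care is that every constant be genuinely independent of $R$: this holds because the Hölder and Riesz-potential constants are purely geometric and Sobolev-type, while the $R$-uniformity of $\|\bar{u}_0^R\|_{H^1(\R^2)}$ is precisely the content of Lemma \ref{10bounded}, which is what forces the admissible range $\bar{\alpha}_0\in(0,\bar{\alpha}_0^*)$ with $\bar{\alpha}_0^*=R^{-(\tau-\delta)}$. Since no compactness enters, the estimate survives the absence of compact embeddings.
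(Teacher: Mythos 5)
Your proof is correct and follows essentially the same route as the paper: both arguments split the Riesz potential $\int_{\R^2}|x-y|^{-1}|\bar{u}_0^R|^2\,dy$ into near and far regions, control the near part by H\"older together with the uniform $H^1$-bound of Lemma \ref{10bounded} (the paper fixes the exponent so as to invoke \eqref{GN} with $L^6$, while you keep a general $p\in(1,2)$ and the Sobolev embedding, which is an immaterial difference), bound the far part by $|\bar{u}_0^R|_2^2=a^2$, and then feed the resulting $L^\infty$-bound on $A_1,A_2$ into \eqref{CSS2d} to control $A_0$.
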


 \begin{proof}
 According to the above observations, we derive that the constant $a\in(0,\bar{a}^*_R]$
 is independent of $R>0$. On the one hand, using \eqref{CSS2e1} and \eqref{CSS2e2} for $j=1,2$, there holds
 \begin{align*}
|A_j[\bar{u}_0^R]|_\infty& \leq \frac1{4\pi}\int_{\R^2}\frac{|\bar{u}_0^R|^2}{|x-y|}dy
=\frac1{4\pi}\int_{|x-y|<1}\frac{|\bar{u}_0^R|^2}{|x-y|}dy
+\frac1{4\pi}\int_{|x-y|\geq1}\frac{|\bar{u}_0^R|^2}{|x-y|}dy \\
  &\leq  \frac1{\sqrt[3]{4\pi}}\left(\int_{\R^2} |\bar{u}_0^R|^6dy\right)^{\frac13}
+\frac1{4\pi}\int_{\R^2} |\bar{u}_0^R|^2 dy\leq \frac1{\sqrt[3]{4\pi}}\Sigma^{\frac23}a^{\frac23}+\frac1{4\pi}a^2,
\end{align*}
where we have used \eqref{GN} and \eqref{10bounded01}. We exploit \eqref{CSS2d} to see that
\[
|A_0[\bar{u}_0^R]|_\infty\leq \frac{|A_j[\bar{u}_0^R]|_\infty}\pi\int_{\R^2}\frac{|\bar{u}_0^R|^2}{|x-y|}dy.
\]
Combining the above two formulas, we can finish the proof of this lemma.
 \end{proof}

At this position, we are ready to give the proof of Theorem \ref{maintheorem4}.

\begin{proof}[\emph{\textbf{Proof of Theorem \ref{maintheorem3}}}]
Firstly, with \eqref{10bounded01} and \eqref{22growth1} in hands,
we can easily prove that $|f^{R,\delta}(\bar{u}_0^R)|_2$
is uniformly bounded in $R>0$.

Then, we would show that there is a constant $C_0>0$ which is independent of $R>0$
such that $|\bar{u}_0^R|_\infty\leq C_0$.
To obtain it, since
 $\bar{u}_0^R$ is a nontrivial solution of Eq. \eqref{mainequation1} with a suitable $\bar{\lambda}_0^R$, namely there holds
$$
 -\Delta \bar{u}_0^R +\bar{\lambda}_0^R\bar{u}_0^R
 +(A_0[\bar{u}_0^R]+A_1^2[\bar{u}_0^R]+A_2^2[\bar{u}_0^R])\bar{u}_0^R=f^{R,\delta}(\bar{u}_0^R) ~ \mbox{in}~ \R^2.
 $$
Since $h$ is nonnegative, without loss of generality, we can assume that $\bar{u}_0^R\geq0$ for all $x\in\R^2$,
and so,
$$
 -\Delta \bar{u}_0^R + \bar{u}_0^R\leq (1+\bar{\lambda}_0^R)\bar{u}_0^R
 + |A_0[\bar{u}_0^R]+A_1^2[\bar{u}_0^R]+A_2^2[\bar{u}_0^R]|\bar{u}_0^R+ f^{R,\delta}(\bar{u}_0^R) ~ \mbox{in}~ \R^2.
 $$
According to \eqref{10bounded01}, we have that $\bar{\lambda}_a^R\leq \bar{C}<+\infty$
for some constant $\bar{C}>0$ which is independent of $R>0$.
As a consequence  of Lemma \ref{3A}, the Lax-Milgram
theorem gives the existence of a function $\bar{w}_0^R\in H^{2}(\R^2)$ such that
$$
 -\Delta \bar{w}_0^R+\bar{w}_0^R=(1+\bar{\lambda}_0^R)\bar{u}_0^R
 + |A_0[\bar{u}_0^R]+A_1^2[\bar{u}_0^R]+A_2^2[\bar{u}_0^R]|\bar{u}_0^R+ f^{R,\delta}(\bar{u}_0^R) ~ \mbox{in} ~ \R^2.
 $$
Next, we fix the test function
 $$
 v_r(x)=\phi(x/r)(\bar{u}_0^R-\bar{w}_0^R)^{+}(x) \in H^{1}(\R^2),
 $$
 where $\phi \in C_{0}^{\infty}(\R^2)$ satisfies
 $$
 0 \leq \phi(x) \leq 1,~\forall x \in \R^2;~\phi(x)=1, ~ \forall x \in B_1(0); ~\mbox{and}~\phi(x)=0~\forall x \in B^{c}_{2}(0).
 $$
Using the function test $v_r$
 on $-\Delta(\bar{u}_0^R-\bar{w}_0^R)+(\bar{u}_0^R-\bar{w}_0^R)\leq0$ in $\R^2$, we get the inequality below
 $$
 \int_{\R^2}\nabla\big(\bar{u}_0^R-\bar{w}_0^R)\nabla v_r + (\bar{u}_0^R-\bar{w}_0^R)v_r\big] dx \leq 0,
 $$
Since
$$
v_r \to (\bar{u}_0^R-\bar{w}_0^R)^+ ~\mbox{as} ~ r \to +\infty ~\mbox{in} ~H^1(\R^2),
$$
by the Lebesgue's Dominated Convergence theorem,
we arrive at
$$
 \int_{\R^2}|\nabla (\bar{u}_0^R-\bar{w}_0^R)^+|^2+|(\bar{u}_0^R-\bar{w}_0^R)^+|^2  \,dx \leq 0,
$$
implying that
\[
0\leq \bar{u}_0^R\leq \bar{w}_0^R,~ \forall x \in \R^2.
\]
By using the continuous Sobolev embedding $H^2(\R^2) \hookrightarrow L^{\infty}(\R^2)$, there is a $C_4>0$ independent of $R>0$ such that
$$
|\bar{w}_0^R|_\infty \leq C_4\|\bar{w}_0^R\|_{H^{2}(\R^2)}, ~\forall R>0
$$
which together with the last fact gives that
$$
|\bar{u}_0^R|_\infty \leq C_4\|\bar{w}_0^R\|_{H^{2}(\R^2)}, \quad \forall R>0.
$$
On the other hand, by Br\'{e}zis \cite[Theorem 9.25]{Brezis}, there is a $C_5>0$ independent of $R>0$ such that
$$
\|\bar{w}_0^R\|_{H^{2}(\R^2)} \leq C_5\big|(1+\bar{\lambda}_0^R)  \bar{u}_0^R
+|A_0[\bar{u}_0^R]+A_1^2[\bar{u}_0^R]+A_2^2[\bar{u}_0^R]|\bar{u}_0^R + f^{R,\delta}(\bar{u}_0^R)\big|_2, ~ \forall R>0,
$$
from where it follows that
$$
\|\bar{w}_0^R\|_{H^{2}(\R^2)} \leq C_6, ~ \forall R>0
$$
for some $C_6>0$ independent of $R>0$. Have this in mind, we must have
$$
|\bar{u}_0^R|_\infty \leq C_0, \quad \forall R>0,
$$
for some $C_0>0$ independent of $R>0$, showing the desired result.

Finally, we shall conclude that $\bar{u}_0^R\in H^1(\R^2)$ is a nontrivial solution of the original Eq. \eqref{mainequation1}
with $\lambda=\bar{\lambda}_0^R$
by choosing $R=C_0>0$ in \eqref{fR} and hence $\bar{\alpha}_0^*= C_0^{ \delta-\tau} >0$.
In other words, the couple $(\bar{u}_0^R,\bar{\lambda}_0^R)$ is a weak solution of problems \eqref{mainequation1}-\eqref{mainequation1a}.
 The proof is completed.
\end{proof}

\subsection{The case $\bar{\delta}=2$}\label{cricritical}\

In this subsection, we have the nonlinearity $f^{R,\bar{\delta}}=f^{R,2}$.
Firstly, we derive the following estimate for the mountain-pass value $m^{R,2}(a)$
defined in \eqref{montainpassvalue}.

\begin{lemma}\label{2estimate}
Suppose $(h_5)$ additionally, then there is a sufficiently large $\xi_0^R>0$
such that for all $\xi>\xi_0^R$, there holds
\begin{equation}\label{estimate1}
  m^{R,2}(a)<\frac{2\pi}{ \gamma +\bar{\alpha}_0R^{\delta-2} }.
\end{equation}
\end{lemma}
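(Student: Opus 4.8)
The plan is to follow the scheme of Lemma \ref{estimate}, replacing the role of $(f_5)$ by the polynomial lower bound furnished by $(h_5)$. Recall that, exactly as in Lemmas \ref{unique} and \ref{2PSsequence} (whose proofs carry over verbatim to the truncated functional $E^{R,2}$ thanks to Lemmas \ref{AR}--\ref{monotone} and the growth bounds \eqref{2growth3}--\eqref{2growth4}), for each $u\in S(a)$ there is a unique $t_u>0$ with $u_{t_u}\in\mathcal{M}^{R,2}(a)$ and $E^{R,2}(u_{t_u})=\max_{t>0}E^{R,2}(u_t)$; hence
\[
m^{R,2}(a)\le \max_{t>0}E^{R,2}(u_t),\qquad\forall\,u\in S(a).
\]
Thus it suffices to exhibit a single $u_0\in S(a)$, chosen independently of $\xi$, for which the right-hand side drops below $2\pi/(\gamma+\bar\alpha_0R^{\delta-2})$ once $\xi$ is large. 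In contrast to Lemma \ref{estimate}, no concentrating Moser sequence is needed here: the largeness of $\xi$ alone will do the work.

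First I would record the consequence of $(h_5)$: since $h(s)\ge \xi s^{p-1}$ for $s\ge0$ and the exponential factor in \eqref{fR} is $\ge1$, one gets $F^{R,2}(s)\ge H(s)\ge \tfrac{\xi}{p}|s|^p$ for all $s\ge0$, while $F^{R,2}(s)=0$ for $s\le0$, so we may take $u_0\ge0$. Fix any nonnegative $u_0\in S(a)$, say a smooth compactly supported bump normalized in $L^2$, and set $C_1:=\int_{\R^2}[|\nabla u_0|^2+(A_1^2[u_0]+A_2^2[u_0])|u_0|^2]dx$ and $C_2:=\int_{\R^2}u_0^p dx$, both finite and positive by \eqref{gauge1} and both independent of $\xi$. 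Using that the Chern--Simons term scales like the Dirichlet term under $u\mapsto u_t=tu(t\cdot)$ (as already reflected in the computation of $\varsigma'(t)$ in Lemma \ref{unique}), I obtain
\[
E^{R,2}((u_0)_t)\le \frac{C_1}{2}\,t^2-\frac{\xi C_2}{p}\,t^{p-2},\qquad t>0.
\]

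Finally, since $p>4$, the right-hand side $g(t)=\tfrac{C_1}{2}t^2-\tfrac{\xi C_2}{p}t^{p-2}$ attains its maximum over $t>0$ at $t_\xi=\big(\tfrac{C_1p}{(p-2)\xi C_2}\big)^{1/(p-4)}$, with value $\tfrac{C_1}{2}\,\tfrac{p-4}{p-2}\,t_\xi^2$. Because $t_\xi\to0^+$ as $\xi\to+\infty$, this maximal value tends to $0$; in particular there is $\xi_0^R>0$, depending only on $R$, $\bar\alpha_0$, $p$ and the fixed $u_0$, such that for every $\xi>\xi_0^R$ it lies strictly below the (fixed) threshold $2\pi/(\gamma+\bar\alpha_0R^{\delta-2})$. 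Combining this with the displayed bound for $m^{R,2}(a)$ yields the desired strict inequality and completes the proof. The only point requiring care — and the reason the Chern--Simons term does not spoil the argument — is that the entire term $\int(A_1^2+A_2^2)|u_0|^2$ is absorbed into the single $\xi$-independent constant $C_1$ and scales only as $t^2$, so it never competes with the $t^{p-2}$ term that drives the maximum to zero; no compactness or finer structure of $f^{R,2}$ enters.
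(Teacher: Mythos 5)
Your proposal is correct and follows essentially the same route as the paper: the paper also bounds $F^{R,2}(s)\geq H(s)\geq \tfrac{\xi}{p}s^p$ via $(h_5)$, tests with a fixed $\xi$-independent element of $S(a)$ (an explicit Gaussian $\psi=a\pi^{-1/2}e^{-|x|^2/2}$, with the Chern--Simons term controlled by \eqref{gauge1}), maximizes $\tfrac{C_1}{2}t^2-\tfrac{\xi C_2}{p}t^{p-2}$ over $t>0$, and lets the resulting value, which is $O(\xi^{-2/(p-4)})$, fall below the threshold for $\xi$ large. The only cosmetic difference is that the paper computes the maximum explicitly to display a formula for $\xi_0^R$, whereas you argue qualitatively that it tends to zero.
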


\begin{proof}
Given $a>0$, define $\psi(x)=a \pi^{-\frac12}e^{-\frac12|x|^2}$
for all $x\in\R^2$
and  so $|\nabla \psi|_2^2=|\psi|_2^2=a^2$. By \eqref{gauge1},
\[
\int_{\R^2}  (A_1^2[\psi]+A_2^2[\psi])|\psi|^2 dx\leq 16\bar{C}_r^2a^6.
\]
For all $t>0$, since $f^{R,2}(t)\geq h(t)\geq \xi t^{p-1}$ for all $t>0$, we have that
\begin{align*}
E^{R,2}(\psi_t) &
\leq \max_{t>0}\bigg(\frac{t^2}{2}\int_{\R^2}[|\nabla \psi|^2 + (A_1^2+A_2^2)|\psi|^2]dx-\frac{\xi t^{p-2}}{p}
\int_{\R^2}|\psi|^pdx\bigg)\\
&\leq \frac{p-4}{2(p-2)}\left[\frac{p}{(p-2)\xi}\right]^{\frac2{p-4}}(1+16\bar{C}_r^2a^3)^{\frac{2(p-2)}{p-4}}
\pi^{\frac{p-2}{p-4}}a^{-\frac{2p}{p-4}},
\end{align*}
where we have used $|\psi|_p^p\geq a^p\pi^{-\frac{p-2}{2}}$.
By Lemma \ref{unique}, one sees that $m^{R,2}(a)\leq \max\limits_{t>0}E^{R,2}(\psi_t)$.
Let us choose the constant $\xi_0(R)$ ro satisfy
\[
\xi_0^R=
\frac{p(1+16\bar{C}_r^2a^3)^{ p-2 }}{(p-2)a^{p}}
\pi^{\frac{p-2}{2}}
\left[\frac{(p-4)(\gamma +\bar{\alpha}_0R^{\delta-2})}{4\pi(p-2)}\right]^{\frac{p-4}2}
\]
which indicates the desired result. The proof is completed.
\end{proof}

Now, we can exhibit the proof of the second part (critical exponential case) of Theorem \ref{maintheorem4}.

\begin{proof}[\textbf{\emph{Proof of Theorem \ref{maintheorem4}}}]
\textbf{(Completed).}
Due to Lemma \ref{2estimate}, the proof is totally similar to that of Theorem \ref{maintheorem2} and
so we just show the counterpart of Step I in this situation.

If the case I occurs, then we can derive
\[
\limsup_{n\to\infty}\int_{\R^2}|\nabla u_n|^2dx=2\limsup_{n\to\infty}E^{R,2}(u_n)=2m^{R,2}
(a)<\frac{4\pi}{\gamma +\bar{\alpha}_0R^{\delta-2}}.
\]
Choosing $\nu>1$ sufficiently close to 1 in such
a way that $\frac1\nu+\frac1{\nu^\prime}=1$ and
\[
 |\nabla u_n|_2^2  <\frac{4\pi(1-\epsilon)}{\nu(\gamma +\bar{\alpha}_0R^{\delta-2})}~\text{for some suitable}~\epsilon\in(0,1).
\]
Define $\bar{u}_n=\sqrt{\frac{\nu(\gamma +\bar{\alpha}_0R^{\delta-2})}{4\pi(1-\epsilon)}}u_n$, then $|\nabla \bar{u}_n|_2^2<1$ and $|\bar{u}_n|_2^2
=\frac{\nu(\gamma +\bar{\alpha}_0R^{\delta-2})}{4\pi(1-\epsilon)}a^2<+\infty$. Using the above facts,
we depend on \eqref{TM2} to see that
\[
\int_{\R^2} (e^{ \nu(\gamma +\bar{\alpha}_0R^{\delta-2}) u_n^2}-1) dx
=\int_{\R^2} (e^{ 4\pi (1-\epsilon)\bar{u}_n^2}-1) dx\leq C<+\infty,
\]
The remaining part is trivial and we omit it here. The proof is completed.
\end{proof}

Then, arguing as before, we begin considering the $L^\infty$-estimate for the nontrivial solution $(\bar{u}_0^R,\bar{\lambda}_0^R)$
established in Theorem \ref{maintheorem4}. To the end,
we prove the following result.

\begin{lemma}\label{20bounded}
Let $(h_5)$ be satisfied.
 There is a constant
 $\tau^*=2+\frac{1}{R}>0$, then for all for every $\alpha>0$, $\tau\in[2,\tau^*)$ and $R>e$,
each sequence $\{u_n\}\subset H^1(\R^2)$ satisfying \eqref{PSsequence1}, \eqref{PSsequence2} and \eqref{PSsequence3} is uniformly bounded in $n\in \mathbb{N}$
and $R>e$, that is,
there is a constant $\Pi>0$ independent of $n\in \mathbb{N}$ and $R>0$ such that
\begin{equation}\label{20bounded01}
\sup_{n\in \mathbb{N}} |\nabla u_n |_{2}^2\leq\Pi<+\infty~\text{\emph{and}}~0<\lambda_n^{R,\delta}\leq \Pi<+\infty,
\end{equation}
provided $\xi_0>0$ in $(h_5)$ is sufficiently large,
where the constant $\Pi>0$ independent of $R>e$ satisfies
\begin{equation}\label{20bounded00}
 \Pi<\frac{\pi}{2\alpha e^{\frac1e}}.
\end{equation}
\end{lemma}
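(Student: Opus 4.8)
The plan is to transcribe the three-part scheme of Lemma~\ref{10bounded} into the critical regime $\bar{\delta}=2$: first bound the level $m^{R,2}(a)$ from above uniformly in $R$ and make it small, then convert this into a uniform bound for $|\nabla u_n|_2^2$ through the Ambrosetti--Rabinowitz inequality of Lemma~\ref{AR}, and finally read off the bound for $\lambda_n^{R,2}$ from \eqref{2bounded1}. The genuinely new ingredient, which distinguishes this from Lemma~\ref{10bounded}, is that the exponential weight $e^{\bar{\alpha}_0 R^{\tau-2}t^2}$ occurring in $f^{R,2}$ for $t\geq R$ must be tamed uniformly in $R$; this is precisely the purpose of the restriction $\tau\in[2,\tau^*)$ with $\tau^*=2+\frac1R$.

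First I would isolate the elementary estimate that drives everything. Since $0\leq\tau-2<\frac1R$ and the map $R\mapsto\frac{\log R}{R}$ has global maximum $\frac1e$ at $R=e$, for every $R>e$ one has
\[
R^{\tau-2}=e^{(\tau-2)\log R}<e^{\frac{\log R}{R}}\leq e^{\frac1e}.
\]
Combining this with $(h_4)$ and $\delta<2$ (so that $t^\delta\leq t^2$ for $t\geq R>e$), the definition \eqref{fR} yields a growth control for $f^{R,2}$ whose exponential constant is bounded by $\gamma+\bar{\alpha}_0 e^{1/e}$, independently of $R$. In particular the counterpart of \eqref{22growth1}--\eqref{22growth2} holds with a coefficient $M_\varepsilon$ independent of $R$, which is exactly what permits every later use of the Trudinger--Moser inequality \eqref{TM2} to be carried out with one and the same constant for all $R>e$.

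Next I would bound $m^{R,2}(a)$. Running the test-function computation of Lemma~\ref{2estimate} with $\psi(x)=a\pi^{-1/2}e^{-|x|^2/2}$ and $(h_5)$, and observing that the resulting estimate for $\max_{t>0}E^{R,2}(\psi_t)$ involves $\xi$ and the $R$-independent constant $\bar{C}_r$ but not $R$ itself, gives $0<m^{R,2}(a)\leq C(\xi)$ with $C(\xi)\to0$ as $\xi\to+\infty$. Feeding \eqref{PSsequence1} and \eqref{PSsequence3} into the computation of Lemma~\ref{bounded}, together with $f^{R,2}(u_n)u_n-\theta F^{R,2}(u_n)\geq0$ from Lemma~\ref{AR}, produces
\[
\frac{\theta-4}{2(\theta-2)}|\nabla u_n|_2^2\leq E^{R,2}(u_n)-\frac{1}{\theta-2}J^{R,2}(u_n)=m^{R,2}(a)+o_n(1),
\]
so $\sup_n|\nabla u_n|_2^2\leq\Pi$ with $\Pi$ a multiple of $C(\xi)$ independent of $R$. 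The same smallness forces $\int_{\R^2}F^{R,2}(u_n)\,dx$ and $\int_{\R^2}f^{R,2}(u_n)u_n\,dx$ to be uniformly small (again via Lemma~\ref{bounded} and the $R$-uniform \eqref{TM2}), whence \eqref{2bounded1} shows $\lambda_n^{R,2}$ is uniformly small as well; its positivity is obtained exactly as in Step~3 of the proof of Theorem~\ref{maintheorem1}, using the smallness of $a$. Enlarging $\xi_0$ makes the common bound $\Pi$ satisfy $\Pi<\frac{\pi}{2\alpha e^{1/e}}$, which is \eqref{20bounded00}.

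The main obstacle is the uniformity in $R$. Without the bound $R^{\tau-2}<e^{1/e}$ the exponential constant of $f^{R,2}$ would grow with $R$ and none of the Trudinger--Moser estimates could be performed with an $R$-independent constant; once that single inequality is secured, every remaining step is a routine $R$-uniform rerun of the critical-case arguments already established, and the freedom in $\xi_0$ takes care of the quantitative threshold \eqref{20bounded00}.
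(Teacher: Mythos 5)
Your proposal is correct and follows essentially the same route as the paper: the pivotal observation $R^{\tau-2}\leq R^{1/R}\leq e^{1/e}$ for $\tau<\tau^*=2+\tfrac1R$ and $R>e$ yielding $R$-uniform growth conditions, an $R$-independent and $\xi$-small upper bound on $m^{R,2}(a)$ coming from the $(h_5)$ test-function computation of Lemma \ref{2estimate}, and the conversion of the level bound into the gradient bound via $E^{R,2}-cJ^{R,2}$ together with Lemma \ref{AR}. The only cosmetic difference is that the paper routes the level bound through the auxiliary functional $I$ built from $H$, while you bound $\max_{t>0}E^{R,2}(\psi_t)$ directly; both give the same $R$-independent, $\xi$-controlled constant.
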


\begin{proof}
We claim that there is a constant $\tilde{m}(a)>0$ which is independent of $R>0$ such that
\begin{equation}\label{2bounded4}
0<m^{R,{2}}(a)\leq \tilde{m}(a)<+\infty,~\forall R>0.
\end{equation}
Indeed, in view of the definition of $f^{R,2}$, one concludes that $F^{R,2}(t)\geq H(t)$
for all $t\in\R$, and so, $E^{R,2}(u)\leq I(u)$ for all $u\in H^1(\R^2)$, where the
energy functional $I:H^1(\R^2)\to\R$ is defined by \eqref{hhh}.
Here, it is enough to choose the constant $\tilde{m}(a)$ to be a minimization of $I|_{\mathcal{M}(a)}$.
Exploiting the very similar calculations in Lemma \ref{2estimate}, we can find such a $\Pi>0$ in \eqref{20bounded00}
to satisfy
\begin{equation}\label{2bounded4444}
\tilde{m}(a)<\frac{\theta-4}{ 2(\theta-3)} \Pi.
\end{equation}

Secondly, we shall improve \eqref{2growth3} and \eqref{2growth4}.
Obviously, $\lim\limits_{R\to+\infty}R^{\frac{1}{R}}=1$
and the function $R^{\frac{1}{R}}$ is strictly decreasing in $R\in(e,+\infty)$,
then $0<R^{\frac{1}{R}}\leq e^{\frac{1}{e}}$ for all $R\in(e,+\infty)$. Consider
$\tau^*=2+\frac{1}{R}>0$ for the fixed $R>e$, $\varepsilon>0$ and $q\geq2$, there is a
  constant $M_\varepsilon>0$ independent of $R>e$ such that
\begin{equation}\label{22growth3}
  |f^{R,2}(s)|\leq \varepsilon |s|^{\chi-1}+M_\varepsilon|s|^{q-1}(e^{2\alpha e^{\frac1e} |s|^2}-1),~\forall s\in\R.
\end{equation}
Actually, using $(h_1)$, by some elementary calculations,
\[
0\leq \frac{ |f^{R,2}(s)| }{|s|^{\chi-s}}\leq \frac{ |h(s)|e^{\alpha R^{\tau-2}|s|^2} }{|s|^{\chi-1}}
\leq \frac{ |h(s)|e^{\alpha R^{\tau^*-2}|s|^2} }{|s|^{\chi-1}}
\leq \frac{ |h(s)|e^{\alpha R^{\frac1R}|s|^2} }{|s|^{\chi-1}}
\leq \frac{ |h(s)|e^{\alpha e^{\frac1e}|s|^2} }{|s|^{\chi-1}}\to0
\]
uniformly in $s\to0$. On the other hand, there is a $s_0>0$ independent of $R>e$ such that
\begin{align*}
|f^{R,2}(s)| &
\leq Me^{\gamma |s|^{\delta}}e^{\alpha e^{\frac1e} |s|^{2}}
\leq  \bar{M}   e^{2\alpha e^{\frac1e} |s|^{2}}
 \leq   \tilde{M}  |t|^q(e^{2\alpha e^{\frac1e} t^{2}} -1), ~\forall |s|\geq s_0,
\end{align*}
where the constants $\tilde{M}>\bar{M}>M$ are independent of $R>e$ and we have used $(h_3)$.

\[
 0\leq f^{R,\delta}(t)\leq Me^{\gamma t^\delta}e^{\alpha_0^* R^{\tau-\delta} t^\delta}
\leq M e^{(\gamma + 1)t^\delta} \leq M_1e^{\alpha t^2}, ~\forall t\in\R,
\]
where $M_1>0$ is a constant independent of $R>0$.
Similarly, there holds
\begin{equation}\label{22growth4}
  |F^{R,\delta}(s)|\leq \varepsilon |s|^{\chi}+M_\varepsilon|s|^{q}(e^{2\alpha e^{\frac1e} |s|^2}-1),~\forall s\in\R.
\end{equation}

Finally, we show \eqref{20bounded01}. In fact, we make full use \eqref{PSsequence1} and \eqref{PSsequence3} jointly with Lemma \ref{AR}
to get
\begin{align*}
  \int_{\R^2}|\nabla u_n|^2dx &=2E^{R,2}(u_n)-\int_{\R^2}(A_1^2+A_2^2)|u_n|^2dx+\int_{\R^2}F(u_n)dx+o_n(1) \\
    &\leq  2E^{R,2}(u_n)+\frac{2}{\theta-4}\big[E^{R,2}(u_n)-\frac12J^{R,2}(u_n)\big]+o_n(1)\\
 &=  \frac{2(\theta-3)}{\theta-4} E^{R,2}(u_n) +o_n(1)
=  \frac{2(\theta-3)}{\theta-4}  m^{R,2}(a)+o_n(1)
\end{align*}
which together with \eqref{2bounded4} and \eqref{2bounded4444} indicates the first part of \eqref{20bounded01}.
The remaining part is trivial, we omit it here. The proof is completed.
\end{proof}

\begin{proof}[\textbf{\emph{Proof of Theorem \ref{maintheorem3}}}]\textbf{(Completed).}
We
 claim that $|f^{R,2}(\bar{u}_0^R)|_2$ is uniformly bounded in $R>e$.
Let us contemplate $\bar{w}_0^R=\Pi^{-\frac{1}{2}}\bar{u}_0^R$,
then $|\nabla\bar{w}_0^R|^2_{2}\leq 1$ and $|\bar{w}_0^R|^2_{2}=\Pi^{-1}a^2<+\infty$. According to the definition of $\tau^*$,
we deduce that \eqref{22growth3} still remains true.
 Since $q\geq2$ and \eqref{20bounded00} gives that
\[
 8\alpha e^{\frac1e} \Pi< 4\pi ,
\]
we then apply \eqref{TM2} to conclude that
\[
 \int_{\R^2} |\bar{u}_0^R|^{2(q-1)}(e^{4\alpha e^{\frac1e} |\bar{u}_a^R|^2}-1) dx
\leq \bigg(\int_{\R^2} |\bar{u}_0^R|^{4(q-1)} dx\bigg)^{\frac12}
\bigg(\int_{\R^2} (e^{8\alpha e^{\frac1e}\Pi |\bar{w}_0^R|^2}-1) dx\bigg)^{\frac12}\leq C
\]
for some $C>0$ independent of $R>e$. Besides, $2(\chi-1)\geq6$, it simply calculates that
\[
\int_{\R^2} |\bar{u}_0^R|^{2(\chi-1)} dx\leq C
\]
for some $C>0$ independent of $R>e$. Exploiting \eqref{22growth3}, we could derive the claim.
Repeating the remaining parts in proof of Theorem \ref{maintheorem3} in Subsection \ref{subcritical}, we must have
$$
|\bar{u}_0^R|_\infty \leq \bar{C}_0, \quad \forall R>e.
$$
for some $\overline{C}_0>0$ independent of $R>e$.
Finally, we obtain that the couple $(\bar{u}_0^R,\bar{\lambda}_0^R)$ is a weak solutions to problems \eqref{mainequation1}-\eqref{mainequation1a}
by choosing $R=\tilde{C}_0=\max\{\bar{C}_0,e\}>0$ in \eqref{fR} and then $\tau^*=2+\frac1{ \tilde{C}_0} >0$. The proof is completed.
\end{proof}

\section*{Appendix}

\noindent \textbf{Lemma A.1.}
Suppose that $f$ satisfies \eqref{definitionsubcriticalgrowth} and $(f_1)$, or
\eqref{definitioncriticalgrowth} and $(f_1)$. Let $u\in H^1(\R^2)$ be a
nontrivial weak solution of Eq. \eqref{mainequation1} with some suitable $\lambda>0$,
then it satisfies the so-called Poho\u{z}aev identity below
\[
 \lambda\int_{\R^2}|u|^2dx
 +2\int_{\R^2}(A_1^2+A_2^2)|u|^2dx-2\int_{\R^2}F(u)dx
=0.
\]

\begin{proof}
Since $u\neq0$, taking \eqref{CSS2d} and \eqref{CSS2e1}-\eqref{CSS2e2} into account,
 define
\[
b(x)=\frac{f(u(x))}{u(x)}-(A_0[u(x)]+A_1^2[u(x)]+A_2^2[u(x)])-\lambda,
\]
by means of \eqref{TM1} and using a similar arguments in Lemma \ref{3A},
we can conclude that $b(x)\in L_{\text{loc}}^1(\R^2)$
and $u$ satisfies the following elliptic equation
 $$
-\Delta u= b(x)u.
$$
In view of the classic Br\'{e}zis-Kato theorem,
one would conclude that $u\in L_{\text{loc}}^s(\R^2)$ for all $1\leq s<+\infty$.
Thus $u\in W^{2.s}_{\text{loc}}(\R^2)$ for all $1\leq s<+\infty$
by the Cald\'{e}ron-Zygmund inequality.

To derive the Poho\u{z}aev type identity for
Eq. \eqref{mainequation1}, we use a truncation argument due to Kavian (see e.g. \cite[Appendix B]{Willem}).
Let $\psi\in C^\infty([0,+\infty),[0, 1])$ such that $\psi(r) = 1$ for
$r\in [0, 1]$ and $\psi(r) = 0$ for $r\in[2,+\infty)$. Define $\psi_n(x) \triangleq \psi(|x|^
2/n^2)$ on $\R^2$ for $n\in \mathbb{N}$. Then
there exists $C_1>0$ such that
\[
0\leq \psi_n\leq C_1~\text{and}~|x||\nabla \psi_n(x)|\leq C_1.
\]
Multiplying Eq. \eqref{mainequation1} by $\psi_n(x,\nabla u)$, we have for every
$n\in \mathbb{N}$,
\begin{equation}\label{Pohozaev1}
0=\big\{-\Delta u+ \lambda u+ \left(A_0[u]+ A_1^2[u]+A_2^2[u]\right)u-
 f(u)\big\}\psi_n(x,\nabla u).
\end{equation}
For every
$n\in \mathbb{N}$, by virtue of the properties of the divergence,
 it is clear to compute that
\[
\left\{
  \begin{array}{ll}
   -\psi_n(x,\nabla u)\Delta u   =-\text{div}\bigg\{\bigg[
\nabla u(x,\nabla u)-x\frac{|\nabla u|^2}{2}\bigg]\psi_n\bigg\}
-\frac{|\nabla u|^2}{2}(x,\nabla \psi_n)+(x,\nabla u)(\nabla \psi_n,\nabla u), \\
  \psi_n(x,\nabla u)u    =\frac{1}{2}\text{div}[xu^2\psi_n]
-u^2\psi_n-\frac{1}{2}u^2(x,\nabla \psi_n),\\
  -\psi_n(x,\nabla u)f(u) =-\text{div}(x\psi_nF(u))+2\psi_nF(u)+F(u)(x,\nabla \psi_n),
  \end{array}
\right.
\]
 and by setting $\phi_u=A_0[u]+A_1^2[u]+A_2^2[u]$ that
\[
\psi_n(x,\nabla u)\phi_uu
=\frac{1}{2}\text{div}\big[\phi_uxu^2\psi_n\big]
 -\phi_uu^2\psi_n-\frac{1}{2}\phi_u u^2(x,\nabla \psi_n)-\frac{1}{2} u^2\psi_n(x,\nabla \phi_u).
\]
It follows from the divergence theorem and \eqref{Pohozaev1} that
\[
\begin{gathered}
\int_{\partial B_{2n}(0)}\bigg\{
\frac{|(x,\nabla u)|^2}{2n}-n|\nabla u|^2-n\lambda u^2-
n\phi_uu^2+2nF(u)
\bigg\}\psi_nd\sigma\hfill\\
\ \ \ \ = -\int_{B_{2n}(0)}\bigg\{\lambda u^2+\phi_uu^2+\frac12 (x,\nabla \phi_u)u^2-2F(u)\bigg\}\psi_ndx
\hfill\\
\ \ \ \ \ \ \ \  -\frac{1}{2}\int_{B_{2n}(0)}\bigg\{|\nabla u|^2+\lambda u^2+\phi_uu^2-2F(u)\bigg\}(x,\nabla\psi_n)dx
\hfill\\
\ \ \ \ \ \ \ \   +\int_{B_{2n}(0)}(x,\nabla u)(\nabla\psi_n,\nabla u)dx.
\hfill\\
\end{gathered}
\]
Note that
\[
\int_{\R^2}(\nabla \phi_u,x)u^2dx=-2\int_{\R^2}(A_1^2[u]+A_2^2[u])u^2dx.
\]
Recalling $\psi_n\equiv0$ on $\partial B_{2n}(0)$ together with the definitions of the cutoff function $\psi_n$, then
exploiting the Dominated Convergence theorem, we obatin
\[
\begin{gathered}
 \int_{\R^2}\lambda u^2dx+2\int_{\R^2}(A_1^2[u]+A_2^2[u])u^2dx
-2\int_{\R^2}F(u)dx\hfill\\
\ \ \ \    =\lim_{n\to\infty}\int_{B_{2n}(0)}\bigg\{\lambda u^2+\phi_uu^2+\frac12(x,\nabla\phi_u)u^2-2F(u)\bigg\}\psi_ndx
\hfill\\
\ \ \ \    =-\frac{1}{2}\lim_{n\to\infty}
\int_{B_{\sqrt{2}n}(0)\backslash B_{2n}(0)}\bigg\{|\nabla u|^2+\lambda u^2+\phi_uu^2-2F(u)\bigg\}(x,\nabla\psi_n)dx
\hfill\\
\ \ \ \ \ \ \ \   +\lim_{n\to\infty}\int_{B_{\sqrt{2}n}(0)\backslash B_{2n}(0)}(x,\nabla u)(\nabla\psi_n,\nabla u)dx
\hfill\\
\ \ \ \ =0\hfill\\
\end{gathered}
\]
showing the desired result, where the Fubini's theorem
 is used in the first equality. So, we accomplish the proof.
\end{proof}

\noindent \textbf{Lemma A.2.} Suppose that $f$ satisfies \eqref{definitionsubcriticalgrowth} and $(f_1)$, or
\eqref{definitioncriticalgrowth} and $(f_1)$. Let $u_n\to u$
in $H^1(\R^2)$ and $u_n\to u$ a.e. in $\R^2$,
then
\[
\lim_{n\to\infty}\int_{\R^2}f(u_n)u_ndx=\int_{\R^2}f(u)udx~\text{and}
\lim_{n\to\infty}\int_{\R^2}F(u_n)dx=\int_{\R^2}F(u)dx.
\]

\begin{proof}
Since $\|u_n-u\|^2_{H^1(\R^2)}\to0$, adopting some very similar calculations in the Step III
in the proof of Theorem \ref{maintheorem2}, we apply \eqref{growth1} and \eqref{TM2} to have that
\[
\int_{\R^2}f(u_n)u_ndx\leq C\int_{\R^2}|u_n|^\chi dx+C\left(\int_{\R^2}|u_n|^{q\nu^\prime} dx\right)^{\frac1{\nu^\prime}}.
\]
Now, one can conclude that $f(u_n)u_n\to f(u)u$ by using a variant of Vitali's Dominated Convergence theorem.
The remaining part is trivial, we omit it here.
\end{proof}

\noindent \textbf{Lemma A.3.} Suppose that $f$ satisfies $(f_1)$ and $(f_3)$, then there holds
\[
f^\prime(s)s^2-5f(s)s+8F(s)>0,~\forall s\in\R\backslash\{0\}.
\]
\begin{proof}
Since $f(s)\equiv0$ for all $s\in(-\infty,0]$, then it suffices to consider $s\in(0,+\infty)$.
According to $(f_1)$ and $(f_3)$, the function $\bar{F}\in \mathcal{C}^1$
and so $\bar{F}^\prime(s)>0$ for all $s\in(0,+\infty)$.
Obviously, there holds
\[
\frac d{ds}\bar{F}^\prime(s)=\frac{f^\prime(s)s^2-5f(s)s+8F(s)}{s^5}>0,~\forall s\in(0,+\infty),
\]
which gives the desired result.
\end{proof}

\noindent \textbf{Lemma A.4.}
For $\Psi= E\circ \zeta$, where $\zeta(u)=t_uu(t_u\cdot)$ for all $u\in S(a)$
and $t_u>0$ comes from Lemma \ref{unique}. Then, $\Psi$ is of $\mathcal{C}^1$ class over $S(a)$
and
\[
\Psi^\prime(u)[v]  =E^\prime(\zeta(u))[v_{t_u}]
\]
for any $u\in S(a)$ and $v\in \mathbb{T}_u$.

\begin{proof}
Recalling the definition of $A_1$ in \eqref{CSS2e1}, given some $u,v\in H^1(\R^2)$ and $t>0$, we have
\begin{align*}
  A_1[u+tv](x) & =-\frac{1}{4\pi }\int_{\R^2}\frac{ x_2-y_2 }{|x-y|^2}(u+tv)^2(y)dy \\
    & =-A_1[u](x)-\frac t{2\pi}\int_{\R^2}\frac{x_2-y_2}{|x-y|^2}u(y)v(y)dy-t^2A_1[v](x)
\end{align*}
leading to
\[
\lim_{t\to0} \frac{A_1[u+tv](x)-A_1[u](x)}{t}=-\frac 1{2\pi}\int_{\R^2}\frac{x_2-y_2}{|x-y|^2}u(y)v(y)dy.
\]
Arguing as a very similar way, by means of \eqref{CSS2e2},  there holds,
\[
\lim_{t\to0} \frac{A_2[u+tv](x)-A_2[u](x)}{t}= \frac 1{2\pi}\int_{\R^2}\frac{ x_1-y_1 }{|x-y|^2}u(y)v(y)dy.
\]
From which, we apply the Fubini's theorem jointly with \eqref{CSS2d} to deduce that
\[\begin{gathered}
\lim_{t\to0}\frac{1}{2t}\int_{\R^2}\sum_{j=1}^2\left(A_j^2[u+tv] -A_j^2[u] \right)|u|^2dx\hfill\\
\ \ \ \   =\lim_{t\to0}\frac{1}{2t}\int_{\R^2}\sum_{j=1}^2\left(A_j [u+tv] +A_j[u])(A_j [u+tv]-A_j[u]) \right)|u|^2dx\hfill\\
\ \ \ \ = \int_{\R^2}A_1[u]\left(-\frac 1{2\pi}\int_{\R^2}\frac{ x_2-y_2 }{|x-y|^2}u(y)v(y)dy\right)u^2dx\hfill\\
\ \ \ \ \ \ \ \   + \int_{\R^2}A_2[u]\left(\frac 1{2\pi}\int_{\R^2}\frac{ x_1-y_1 }{|x-y|^2}u(y)v(y)dy\right)u^2dx\hfill\\
\ \ \ \ = \int_{\R^2}\left(-\frac 1{2\pi}\int_{\R^2}\frac{ x_2-y_2 }{|x-y|^2}A_1[u](x)u^2(x)dx\right)  u(y)v(y)dy\hfill\\
\ \ \ \ \ \ \ \   + \int_{\R^2}\left(\frac 1{2\pi}\int_{\R^2}\frac{(x_1-y_1)}{|x-y|^2}A_2[u](x)u^2(x)dx\right)u(y)v(y)dy\hfill\\
\ \ \ \   = \int_{\R^2}A_0[u](x)u(x)v(x) dx.\hfill\\
\end{gathered}
\]
Due to $E(u_{t_u})=\max_{t>0}E(u_t)$ for all $u\in S(a)$ by Lemma \ref{unique},
then
\begin{align*}
\Psi(u+tv)-\Psi (u) & =E\big( (u+tv)_{t_{u+tv}} \big)-E(u_{t_u})
\leq E\big( (u+tv)_{t_{u+tv}}\big)-E(u_{t_{u+tv}}) \\
  &=  \frac{t_{u+tv}^2}{2}\int_{\R^2}\big[|\nabla(u+tv)|^2+(A^2_1[u+tv]+A^2_2[u+tv] )|u+tv|^2]dx\\
&\ \ \ \  -\frac{t_{u+tv}^2}{2}\int_{\R^2}\big[|\nabla u|^2+(A_1^2[u] +A_2^2[u] )|u|^2 \big]dx\\
&\ \ \ \  -t_{u+tv}^{-2}\int_{\R^2}[F(t_{u+tv} u+tt_{u+tv}v )-F(t_{u+tv} u)]dx\\
&=\frac{t_{u+tv}^2}{2}\int_{\R^2}[2t\nabla u\nabla v+t^2|\nabla v|^2+(A_1^2[u+tv]+A_2^2[u+tv])(2tuv+t^2|v|^2)]dx\\
&\ \ \ \  +\frac{t_{u+tv}^2}{2}\int_{\R^2}(A_1^2[u+tv]+A_2^2[u+tv]-A_1^2[u]-A_2^2[u])|u|^2dx\\
&\ \ \ \  -t_{u+tv}^{-2}\int_{\R^2}f(t_{u+tv} u+  \sigma_t tt_{u+tv}v )tt_{u+tv}vdx,
\end{align*}
where $\sigma_t\in(0,1)$ is determined by the Intermediate Value theorem. Also, we adopt
  $E\big( (u+tv)_{t_{u+tv}} \big)=\max_{t>0}E\big( (u+tv)_{t}\big)$ to get
\begin{align*}
\Psi(u+tv)-\Psi (u) & =E\big( (u+tv)_{t_{u+tv}} \big)-E(u_{t_u})
\geq E\big( (u+tv)_{t_{u}}\big)-E(u_{t_{u}}) \\
  &=  \frac{t_{u}^2}{2}\int_{\R^2}\big[|\nabla(u+tv)|^2+(A^2_1[u+tv]+A^2_2[u+tv] )|u+tv|^2]dx\\
&\ \ \ \  -\frac{t_{u}^2}{2}\int_{\R^2}\big[|\nabla u|^2+(A_1^2[u] +A_2^2[u] )|u|^2 \big]dx\\
&\ \ \ \  -t_{u}^{-2}\int_{\R^2}[F(t_{u} u+tt_{u}v )-F(t_{u} u)]dx\\
&=\frac{t_{u}^2}{2}\int_{\R^2}[2t\nabla u\nabla v+t^2|\nabla v|^2+(A_1^2[u+tv]+A_2^2[u+tv])(2tuv+t^2|v|^2)]dx\\
&\ \ \ \  +\frac{t_{u}^2}{2}\int_{\R^2}(A_1^2[u+tv]+A_2^2[u+tv]-A_1^2[u]-A_2^2[u])|u|^2dx\\
&\ \ \ \  -t_{u}^{-2}\int_{\R^2}f(t_{u} u+  \bar{\sigma}_t tt_{u}v )tt_{u}vdx,
\end{align*}
where $\bar{\sigma}_t\in(0,1)$.
Since the map $u+tv\to t_{u+tv}$ is continuous by Lemma \ref{unique}, one has
\begin{align*}
\Psi^\prime(u)[v]&=\lim_{t\to0}\frac{\Psi(u+tv)-\Psi (u)}{t} \\
& =\frac{t_{u}^2}{2}\int_{\R^2}[2\nabla u\nabla v+(A_1^2[u]+A_2^2[u]+A_0[u])2uv]dx
 -t_{u}^{-2}\int_{\R^2}f(t_{u} u)t_{u}vdx\\
&=
\int_{\R^2} \big[\nabla u_{t_u} \nabla v_{t_u}
+(A_1^2[u_{t_u}]+A_2^2[u_{t_u}]+A_0[u_{t_u}])u_{t_u}v_{t_u}\big]dx - \int_{\R^2}f( u_{t_u})v_{t_u}dx\\
&=E^\prime({u_{t_u}})[v_{t_u}]=E^\prime(\zeta(u))[v_{t_u}]
\end{align*}
which is the desired result. The proof is completed.
\end{proof}

\noindent \textbf{Lemma A.5.} Let $u,v\in H^1(\R^2)$ and suppose
 that $\supp u_t \cap \supp v=\emptyset$ for all $t>0$,
 then
 \[
 \lim_{t\to0^+}\left(\int_{\R^2}A_j^2[u_t+v]|u_t+v|^2dx-\int_{\R^2}A_j^2[u_t]|u_t|^2dx
 - \int_{\R^2}A_j^2[ v]| v|^2dx\right)=0.
 \]

 \begin{proof}
 According to \eqref{CSS2e1} and \eqref{CSS2e2}, for all $t>0$, there holds
 \[
 A_j[u_t+v]=A_j[u_t]+A_j[v],~j=1,2,
 \]
 and
 \[
 |u_t+v|^2=|u_t|^2+|v|^2.
 \]
 It follows from some simple calculations that
 \[\begin{gathered}
 \left|\int_{\R^2}A_j^2[u_t+v]|u_t+v|^2dx-\int_{\R^2}A_j^2[u_t]|u_t|^2dx
 - \int_{\R^2}A_j^2[ v]| v|^2dx\right|
 \hfill\\
 \ \ \ \  = \left|\int_{\R^2}\big(A_j^2[u_t]v^2+A_j^2[v]u_t^2+2A_j[u_t]A_j[v]u_t^2+2A_j[u_t]A_j[v]v^2\big)dx\right| \hfill\\
 \ \ \ \  \leq |A_j[u_t]|_{\widehat{r}}^2 |v|_{\frac{2\widehat{r}}{\widehat{r}-2}}^{2}
 +|A_j[v]|_{\widehat{r}}^2 |u_t|_{\frac{2\widehat{r}}{\widehat{r}-2}}^{2}
 +2 |A_j[u_t]|_{\widehat{r}} |A_j[v]|_{\widehat{r}} |u_t|_{\frac{2\widehat{r}}{\widehat{r}-2}}^{2}
 +|A_j[u_t]|_{\widehat{r}}|A_j[v]|_{\widehat{r}} |v|_{\frac{2\widehat{r}}{\widehat{r}-2}}^{2} \hfill\\
 \ \ \ \  =t^2|A_j[u]|_{\widehat{r}}^2 |v|_{\frac{2\widehat{r}}{\widehat{r}-2}}^{2}
 +t^{\frac4{\widehat{r}}}|A_j[v]|_{\widehat{r}}^2 |u |_{\frac{2\widehat{r}}{\widehat{r}-2}}^{2}
 +2t^{1+\frac4{\widehat{r}}} |A_j[u_t]|_{\widehat{r}} |A_j[v]|_{\widehat{r}} |u_t|_{\frac{2\widehat{r}}{\widehat{r}-2}}^{2}
 +t|A_j[u_t]|_{\widehat{r}}|A_j[v]|_{\widehat{r}} |v|_{\frac{2\widehat{r}}{\widehat{r}-2}}^{2} \hfill\\
 \ \ \ \   \to0 \hfill\\
 \end{gathered}
 \]
 as $t\to0^+$, where we exploited Lemma \ref{gauge1} with $\widehat{r}>2$. The proof is completed.
 \end{proof}


\vskip18pt

\par\noindent {\bf Conflict of interest.} The authors have no competing
interests to declare that are relevant to the content of this
article. \vskip0.1in

\bigskip

\end{document}